\newcommand{\Fc}{\mathcal{F}}
\newcommand{\Gc}{\mathcal{G}}
\newcommand{\Pc}{\mathcal{P}}
\theoremstyle{plain}
\newtheorem{theorem}{Theorem}[section]
\newtheorem{corollary}[theorem]{Corollary}
\newtheorem{lemma}[theorem]{Lemma}
\newtheorem{proposition}[theorem]{Proposition}
\newtheorem*{claim}{Claim}
\theoremstyle{definition}
\newtheorem{definition}[theorem]{Definition}
\theoremstyle{remark}
\newtheorem{remark}[theorem]{Remark}
\numberwithin{equation}{section}
\renewcommand{\P}{\mathbb P}
\newcommand{\C}{\mathbb C}
\newcommand{\R}{\mathbb R}
\newcommand{\lam}{\lambda}
\newcommand{\eps}{\epsilon}
\newcommand{\abs}[1]{\left|#1\right|}
\newcommand{\pa}[1]{\left(#1\right)}
\newcommand{\norm}[1]{\left\|#1\right\|}
\newcommand{\Cc}{\mathcal C}
\newcommand{\Ll}{\mathcal L}
\newcommand{\1}{\mathbb 1}
\newcommand{\sca}[1]{\left\langle#1\right\rangle}
\newcommand{\set}[1]{\left\{#1\right\}}
\newcommand{\D}{\mathbb D}
\newcommand{\N}{\mathbb N}
\newcommand{\B}{\mathbb B}
\DeclareMathOperator{\Leb}{Leb}
\DeclareMathOperator{\dist}{dist}
\DeclareMathOperator{\supp}{supp}
\DeclareMathOperator{\jac}{Jac}
\DeclareMathOperator{\diam}{diam}
\DeclareMathOperator{\lip}{Lip}
\newcommand{\id}{{\rm id}}
\newcommand{\FS}{{\text{\rm \tiny FS}}}
\newcommand{\ddc}{{dd^c}}
\newcommand{\dbar}{{\overline\partial}}
\newcommand{\ddbar}{{\partial\overline\partial}}
\newcommand{\h}{\mathbb{h}}
\renewcommand{\H}{\mathbb{H}}
\DeclareMathOperator{\ent}{Ent}
\newcommand{\Ent}[2]{\ent_{#1} ({#2})}
\subjclass[2010]{37F80, 37D35 (primary), 32U05, 32H50 (secondary)}
\keywords{Equilibrium states, Transfer operator,
Repelling periodic points, K-mixing}
\begin{document} 

\hyphenpenalty=10000

\title[Equilibrium states of  endomorphisms of  $\P^k$ I]{Equilibrium states of  endomorphisms of  $\P^k$ I:\\
Existence and properties}

\begin{author}[F.~Bianchi]{Fabrizio Bianchi}
\address{ 
CNRS, Univ. Lille, UMR 8524 - Laboratoire Paul Painlev\'e, F-59000 Lille, France}
  \email{fabrizio.bianchi$@$univ-lille.fr}
\end{author}

\begin{author}[T.C.~Dinh]{Tien-Cuong Dinh}
\address{National University of Singapore, Lower Kent Ridge Road 10,
Singapore 119076, Singapore}
\email{matdtc$@$nus.edu.sg }
\end{author}

\maketitle 

{\centering\small \emph{Dedicated to the memory of Professor Nessim Sibony}\par}

\begin{abstract}
We develop a new method, based on pluripotential theory, to
study the transfer (Perron-Frobenius) operator induced on $\P^k = \P^k (\C)$
 by a holomorphic endomorphism and a suitable continuous weight.
This method 
allows us to prove the existence and uniqueness of  
 the equilibrium state and conformal measure for very general weights
(due to Denker-Przytycki-Urba\'nski in dimension 1 and
Urba\'nski-Zdunik in higher dimensions, both in the case of H\"older continuous weights).
 We establish a number of
 properties of the equilibrium states, 
including 
 mixing, K-mixing, mixing of all orders, and
an equidistribution of repelling periodic
points.
Our analytic method replaces all distortion estimates on inverse branches with a unique, global,
estimate on dynamical currents, and allows us to reduce the 
dynamical questions to comparisons between currents and their potentials.
\end{abstract}

\setcounter{secnumdepth}{3}
\setcounter{tocdepth}{1}
\tableofcontents

\bigskip

\noindent
{\bf Notation.} 
Throughout the paper, $\P^k$ denotes the complex projective space of dimension $k$
endowed with the standard Fubini-Study form $\omega_\FS$. This is a K\"ahler $(1,1)$-form
normalized so that $\omega_\FS^k$ is a probability measure. We will use the metric and
distance $\dist(\cdot,\cdot)$ on $\P^k$ induced by $\omega_\FS$ and the standard ones
on $\C^k$ when we work on open subsets of $\C^k$. 
We denote by $\B_{\P^k}(a,r)$ (resp.\ $\B_r^k, \D(a,r), \D_r$) 
the ball of center $a$ and radius $r$ in $\P^k$  (resp.\ the ball of center 0 and radius $r$ in $\C^k$,
the disc of center $a$ and radius $r$ in $\C$, and the disc
of center $0$ and radius $r$ in $\C$). 
$\Leb$ denotes the standard Lebesgue measure on a
 Euclidean
 space or on a sphere. 
The oscillation $\Omega(\cdot)$,
the modulus of continuity $m(\cdot,\cdot)$,
 and the
semi-norms $\norm{\cdot}_{\log^p}$ 
of a function
are defined in Section
  \ref{ss:def}.
The currents $\omega_n$ and their dynamical potentials $u_n$ are introduced in Section
\ref{ss:bound-pot-push}.

The pairing $\langle \cdot,\cdot\rangle$ is used for the integral of a function with respect to a
measure or more generally the value of a current at a test form. If $S$ and
$R$ are two $(1,1)$-currents, we will write $|R|\leq S$ when $\Re (\xi R)\leq S$
for every function $\xi\colon\P^k\to\C$ with $|\xi|\leq 1$,
 i.e., all currents $S- \Re (\xi R)$ with $\xi$ as before are positive. Notice 
that this forces $S$ to be real and positive. We also write other inequalities such
as $|R|\leq |R_1|+|R_2|$ if $|R|\leq S_1+S_2$ whenever $|R_1|\leq S_1$ and $|R_2|\leq S_2$.
Recall that $d^c={i\over 2\pi}(\dbar -\partial)$ and $\ddc={i\over\pi}\ddbar$. 
The notations $\lesssim$ and $\gtrsim$ stand for inequalities up to a multiplicative constant.
The function identically equal to 1 is denoted by $\1$.
We also use the function $\log^\star (\cdot):=1+|\log (\cdot)|$. 

Consider a holomorphic endomorphism $f\colon\P^k\to\P^k$ of
 algebraic degree $d\geq 2$ satisfying the Assumption {\bf (A)} in the Introduction. 
Denote respectively by $T$, $\mu=T^k$, $\supp(\mu)$ the Green $(1,1)$-current, 
the measure of maximal entropy (also called
the Green measure or the
equilibrium measure), and the small Julia set of $f$. 
If $S$ is a positive closed $(1,1)$-current on $\P^k$, its dynamical potential is denoted by $u_S$ and is
defined in Section \ref{ss:dyn-pot}.
If $\nu$ is an invariant probability measure, we denote by $\Ent{f}{\nu}$ the metric
entropy of $\nu$ with respect to $f$.

We also consider a weight $\phi$ which is a
 real-valued
continuous function on $\P^k$. 
The
transfer operator (Perron-Frobenius operator) $\Ll=\Ll_\phi$ is introduced in the
Introduction together with the scaling ratio $\lambda= \lambda_\phi$, the conformal measure $m_\phi$,
the density function $\rho=\rho_\phi$, the equilibrium state $\mu_\phi=\rho m_\phi$, the pressure $P(\phi)$,
see also Section \ref{s:lambda-rho}. The measures $m_\phi$ and $\mu_\phi$ are probability measures.
The operator $L$ is a suitable modification of $\Ll$
and is introduced in Section \ref{s:equidistributions-L}.

\section{Introduction and results} \label{s:Intro}

Let $f\colon \P^k\to \P^k$ be a holomorphic endomorphism of 
the complex projective space $\P^k=\P^k (\C)$, with $k\geq 1$, of algebraic degree $d\geq 2$.
Denote by $\mu$ the unique measure of maximal entropy
for the dynamical system $(\P^k, f)$
\cite{lyubich1983entropy,briend2009deux,dinh2010dynamics,berteloot2001rudiments}. The support $\supp(\mu)$ of $\mu$ is called {\it the small Julia} set of $f$.
The measure $\mu$ corresponds to the equilibrium state of the system in the case without weight, i.e., when
the weight is zero. In this paper, we will consider 
the case where the weight, denoted by $\phi$, is not necessarily equal to zero. 
This problem has been studied 
for H\"older continuous
weights
using a geometric approach, 
in dimension 1, see, e.g., 
Denker-Przytycki-Urba\'nski 
\cite{przytycki1990perron,denker1991ergodic,denker1991existence,denker1996transfer}
and Haydn \cite{haydn1999convergence}
just to name a few,
and
in higher dimensions, see 
Szostakiewicz-Urba\'nski-Zdunik
 \cite{urbanski2013equilibrium,szostakiewicz2014stochastics}.
 We
will develop here an analytic method which will allow us to obtain more general and more quantitative results.
Many results are new even when
 for  $k=1$.

Throughout this paper, we make use of the 
following technical assumption for $f$:

\medskip\noindent
{\bf (A)} \hspace{1cm} the local degree of the iterate $f^n:=f\circ\cdots\circ f$ ($n$ times) satisfies
$$\lim_{n\to\infty} {1\over n} \log\max_{a\in\P^k}\deg(f^n,a) =0.$$

\medskip\noindent
Here, $\deg(f^n,a)$ is the multiplicity of $a$ as a solution of the
equation $f^n(z)=f^n(a)$. Note that generic endomorphisms of $\P^k$ satisfy
 this condition, 
 see \cite{dinh2010equidistribution}.
 Our study still holds under a weaker
 condition that the exceptional set of $f$ (i.e., the maximal proper analytic subset of $\P^k$ invariant
 by $f^{-1}$) is empty or more generally has no intersection with $\supp(\mu)$
 (in particular, this condition is superfluous in dimension 1). 
 However, this situation requires more technical conditions on the weight $\phi$.
 We choose not to present this case here in order to simplify the notation and focus on the main new ideas introduced in this topic.
Our 
 main
 goal in this paper is to prove the following theorem
(see 
 Theorem \ref{t:main-lam} and Section \ref{s:new-properties} for more precise statements).

\begin{theorem}\label{t:main}
Let $f$ be an endomorphism of $\P^k$ of algebraic degree $d\geq 2$ and
satisfying the Assumption {\bf (A)} above. Let $\phi$ be 
a real-valued $\log^q$-continuous function on $\P^k$, for some $q>2$, 
such that  $\Omega (\phi) :=\max \phi - \min \phi < \log d$. Then
$\phi$ admits a unique equilibrium state $\mu_\phi$, whose support is equal to  
the small Julia set of $f$. This measure $\mu_\phi$ is $K$-mixing
and mixing of all orders, and 
 repelling 
periodic 
 points of period $n$
 (suitably weighted) 
 are equidistributed with respect to 
 $\mu_\phi$ as $n$ goes to infinity.
Moreover, there is a unique conformal measure $m_\phi$
associated to $\phi$. We have $\mu_\phi=\rho m_\phi$ for some strictly positive continuous function $\rho$ on  $\P^k$
and the preimages of points by $f^n$ (suitably weighted)
 are equidistributed with respect to $m_\phi$ as $n$ goes to infinity.
\end{theorem}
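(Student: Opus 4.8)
The plan is to realize the equilibrium state and conformal measure as fixed objects of the transfer operator $\Ll = \Ll_\phi$ and its dual, and to extract all the dynamical and statistical properties from a single analytic estimate on the dynamical currents $\omega_n$ and their potentials $u_n$. First I would construct the scaling ratio $\lambda = \lambda_\phi$ and the conformal measure $m_\phi$: take any weak-$*$ limit of the normalized Cesàro averages $\frac1N\sum_{n<N}\lambda^{-n}(\Ll^n)^*\delta_x$ (equivalently, diagonalize the dual operator $\Ll^*$ acting on probability measures), obtaining $\Ll^* m_\phi = \lambda m_\phi$; this is the conformal measure, and $\log\lambda = P(\phi)$ will be the pressure. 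Next I would produce the density $\rho = \rho_\phi$ as a fixed point of the normalized operator $\lambda^{-1}\Ll$ acting on a suitable space of functions, so that $\mu_\phi := \rho\, m_\phi$ is $\Ll$-invariant in the dynamical sense and hence $f$-invariant; the key point here is that the $\log^q$-continuity with $q > 2$ and the hypothesis $\Omega(\phi) < \log d$ are exactly what makes $\lambda^{-1}\Ll$ a contraction (after correction) on the relevant seminorms $\norm{\cdot}_{\log^p}$, so that $\rho$ exists, is strictly positive and continuous. The equidistribution of preimages of points by $f^n$ toward $m_\phi$, and of (weighted) repelling periodic points of period $n$ toward $\mu_\phi$, should then follow from the convergence $\lambda^{-n}\Ll^n \to \rho\,\langle m_\phi, \cdot\rangle$ together with the auxiliary operator $L$ of Section \ref{s:equidistributions-L}, which repackages the periodic-point count.

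The heart of the matter — and the new contribution advertised in the abstract — is the convergence $\lambda^{-n}\Ll^n\psi \to \rho\,\langle m_\phi,\psi\rangle$, uniformly in a controlled class of test functions, with a quantitative rate. Rather than estimating distortion along inverse branches of $f^n$ one branch at a time (the classical geometric approach, which is where H\"older regularity was previously needed), I would encode the action of $\Ll^n$ on a positive closed $(1,1)$-current through the associated current $\omega_n$ and its dynamical potential $u_n$ introduced in Section \ref{ss:bound-pot-push}, and prove one global bound: the potentials $u_n$ (or their oscillations) stay uniformly controlled in an $L^p$ or exponential-integrability sense against $\mu$, by a $\ddc$-computation comparing $\omega_n$ with $\mu = T^k$. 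Because $\Omega(\phi) < \log d$, the weight cannot overwhelm the exponential contraction coming from the degree $d$, and the comparison between currents and their potentials (Cauchy–Schwarz / exponential Chebyshev-type inequalities in the pluripotential setting) upgrades the single current estimate into the desired functional convergence. I expect this global current estimate — making precise the sense in which $u_n$ is bounded and controlling the loss of regularity of $\lambda^{-1}\Ll$ on functions of finite $\log^p$-seminorm for $p$ slightly below $q$ — to be the main obstacle; everything about the weight enters only through it.

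With that convergence in hand, uniqueness of $\mu_\phi$ among equilibrium states, and of $m_\phi$ among conformal measures, follows because any competitor must be a fixed object of the same contracting dynamics and hence coincide with the constructed one; the support statement follows because $\lambda^{-n}\Ll^n\1$ is bounded below on $\supp(\mu)$ and the density $\rho$ is strictly positive there, while off $\supp(\mu)$ the iterates decay, forcing $\supp(\mu_\phi) = \supp(\mu)$, the small Julia set. Finally, mixing, $K$-mixing and mixing of all orders are obtained from the spectral-type gap implicit in the convergence $\lambda^{-n}\Ll^n \to \rho\,\langle m_\phi,\cdot\rangle$: decay of correlations for observables in the $\log^p$-class gives mixing; an approximation argument extends it to $L^2(\mu_\phi)$; and the multi-fold version, together with the tail triviality characterizing $K$-mixing, follows by iterating the one-step estimate along several time scales, exactly as the convergence statement is designed to allow. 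The precise quantitative forms are deferred to Theorem \ref{t:main-lam} and Section \ref{s:new-properties}.
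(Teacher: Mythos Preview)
Your outline captures the spirit of the argument but misstates several mechanisms, and there is one genuine gap.

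\textbf{The core estimate.} You correctly locate the novelty in a global current estimate involving $\omega_n$ and $u_n$, but the actual bound is not an $L^p$ or exponential-integrability control on $u_n$ against $\mu$. What the paper proves is a pointwise inequality of the form $|dd^c \Ll^n \1| \lesssim \sum_m \eta(m)\, e^{m\max\phi}\,\rho^+_{n-m}\, d^{(k-1)m}\omega_m$ (Proposition~\ref{p:laplace-1-n}), obtained by writing $\Ll^n\1$ as a push-forward from the Gromov graph $\Gamma_n\subset(\P^k)^{n+1}$ and estimating $dd^c$ there. This is then fed into comparison principles (Corollary~\ref{c:compare-sup}, Lemma~\ref{l:ddc-dyn-log}) that bound the oscillation of a function by the sup-norm of the dynamical potentials of the currents dominating its $dd^c$. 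The potentials $u_n$ are controlled in $L^\infty$, not in any integrated sense.

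\textbf{No contraction or spectral gap.} You repeatedly invoke a contraction of $\lambda^{-1}\Ll$ on $\log^p$-seminorms and a ``spectral-type gap''. The paper proves neither; the spectral gap is explicitly deferred to the sequel. Instead, the $dd^c$ estimate yields equicontinuity of $\{\lambda^{-n}\Ll^n g\}$, and convergence to $c_g\rho$ is extracted by Arzel\`a--Ascoli plus a maximum-principle argument (Claims~1--2 at the end of Section~\ref{ss:proof-main-lam}): any limit function $l$ must satisfy $l/\rho = \max(l/\rho)$ on $\supp(\mu)$, forcing $l = c\rho$. Mixing and $K$-mixing follow from this uniform convergence alone, without any rate. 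Uniqueness of the equilibrium state is not obtained from contraction either, but from differentiability of the pressure $t\mapsto P(\phi+t\psi)$ at $t=0$ (Proposition~\ref{p:pu-uz}, Claim~3).

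\textbf{The gap: periodic points.} Your claim that the operator $L$ ``repackages the periodic-point count'' is incorrect. The operator $L(g) = (\lambda\rho)^{-1}\Ll(\rho g)$ is used only for the mixing properties. The equidistribution of repelling periodic points (Theorem~\ref{t:equidistr-pern}) requires a separate, substantial argument following Briend--Duval: one passes to the natural extension, uses the positivity of the Lyapunov exponents (Proposition~\ref{p:large-entropy}) to produce many contracting inverse branches on small ``nice'' balls, and obtains repelling periodic points as their fixed points. The convergence of $\lambda^{-n}\Ll^n$ enters only to estimate the weighted count of these inverse branches, but the construction itself is geometric and does not follow from the operator convergence alone. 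This is the step your proposal does not address.
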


We say that a function is $\log^q$-continuous if its oscillation
on a ball of radius $r$ is bounded by a constant times
$(\log^\star r)^{-q}$, 
see Section
\ref{ss:def}
 for details. See also Section 
\ref{s:equidistributions-L}
  for  the $K$-mixing and mixing of all orders.

An \emph{equilibrium state} as in the statement above is defined as follows, see for instance
 \cite{ruelle1972statistical,walters2000introduction,przytycki2010conformal}. 
Given a \emph{weight}, i.e., a real-valued continuous
function, $\phi$ as above, we define the \emph{pressure} of $\phi$
as
$$P(\phi) := \sup \big\{ \Ent{f}{\nu} + \langle\nu,\phi\rangle \big\},$$
where the supremum is taken over all Borel $f$-invariant probability measures $\nu$ and $\Ent{f}{\nu} $
denotes the metric entropy of $\nu$.
An equilibrium state for $\phi$ is then an invariant probability measure $\mu_\phi$ 
realizing a maximum in the above formula, that is, 
$$P(\phi) = \Ent{f}{\mu_\phi}+ \langle\mu_\phi,\phi\rangle.$$
On the other hand, a \emph{conformal measure} is defined as follows.
Define the \emph{Perron-Frobenius} (or \emph{transfer}) operator $\Ll$
with weight $\phi$ as (we often drop the index $\phi$ for simplicity)
\begin{equation} \label{e:L}
\Ll g(y):=\Ll_\phi g (y):= \sum_{x \in f^{-1}(y)} e^{\phi (x)} g(x),
\end{equation}
where $g\colon\P^k \to \R$ is a continuous test function and the
points $x$ in the sum are counted with multiplicity. 
A conformal measure is an eigenvector
for the dual operator $\Ll^*$ acting on positive measures.

Notice that, in the case where $\phi$ is H\"older continuous,
a part of
Theorem \ref{t:main} was established by
Urba\'{n}ski-Zdunik \cite{urbanski2013equilibrium}
(also under a genericity assumption for $f$),
 see also
\cite{przytycki1990perron,denker1991ergodic,denker1991existence,denker1996transfer} for
previous results in
dimension $k=1$. When $\phi$ is constant,
the operator $\Ll$ reduces to a constant times the push-forward
operator $f_*$ and we get $\mu_\phi=\mu$. For an account of the 
known
results in this case, see for instance \cite{dinh2010dynamics}.

\medskip

A reformulation of Theorem \ref{t:main} is the following:
given $\phi$ as in the statement, there exist a number $\lam >0$ and
a continuous function $\rho=\rho_\phi \colon \P^k \to \R$ 
such that, for every continuous function $g\colon \P^k\to \R$, 
the following uniform convergence holds:
\begin{equation}\label{e:intro-cv-rho}
\lam^{-n}\Ll^n  g (y) \to c_g  \rho
\end{equation}
for some constant $c_g$ depending on $g$. By duality,
this is equivalent to the convergence, uniform on probability measures $\nu$,
\begin{equation}\label{e:intro-cv-m}
\lam^{-n} (\Ll^*)^{n} \nu  \to m_\phi,
\end{equation}
where $m_\phi$ is a conformal measure associated to the weight
$\phi$.  The equilibrium state $\mu_\phi$ is then given by
$\mu_\phi = \rho m_\phi$, and we have $c_g = \langle m_\phi, g \rangle$. 

\medskip

To prove Theorem \ref{t:main}, in Section \ref{s:lambda-rho}
we develop a new and completely different approach with respect
to \cite{urbanski2013equilibrium} and to the previous studies in dimension 1.
As we will see
 in the second part of this work \cite{bd-eq-states-part2},
the flexibility of this method will allow
 for a more quantitative understanding
  of the convergences
  \eqref{e:intro-cv-rho} and \eqref{e:intro-cv-m}, 
  and for the direct establishment 
of several statistical properties of the equilibrium states.

The main idea of our method is the following. Let us just consider for 
now the case where both of the functions $g$ and $\phi$ are of class $\Cc^2$ (the
general case
 is technically quite involved and
 requires suitable approximations of $g$ and $\phi$ by $\Cc^2$ functions). 
Given such a function $g$, first we want to prove
that the ratio between the maximum and the minimum of 
$\Ll^n g$ stays bounded with $n$. This allows us 
to define the good scaling ratio $\lam$ and to get that the sequence
$\lambda^{-n}\Ll^n g$ is uniformly bounded. Next, we would like to
prove that this sequence is actually equicontinuous. This,  
together with other technical arguments, would imply the existence and uniqueness
of the limit function $\rho$.

In order to establish the above controls, we study the sequence
of $(1,1)$-currents given by $dd^c \Ll^n g $. First we prove that
suitably normalized versions of these currents are uniformly bounded
by a common positive closed $(1,1)$-current $R$.
This is the core of our method which replaces all controls on the distortion of
inverse branches of $f^n$ in the geometric method of \cite{urbanski2013equilibrium}
by a unique, global, and flexible estimate. Namely, for every $n\in \N$
we can get an estimate of the form
\begin{equation}\label{e:intro-ddc}
\Big| dd^c \frac{\Ll^n g }{c_n} \Big| \lesssim \sum_{j=0}^{\infty} \Big( \frac{e^{\Omega(\phi)}}{d} \Big)^j
\frac{ (f_*)^j \omega_\FS}{d^{(k-1)j}}
 \quad \text{with} \quad c_n:=\|g\|_{\Cc^2} \langle \omega_\FS^k,\Ll^n \1\rangle.
\end{equation}
Here, $\omega_\FS$
denotes the usual Fubini-Study form on $\P^k$ normalized so that $\omega_\FS^k$ is a probability measure.
Notice that the last infinite sum gives a
 key reason for the assumption $\Omega (\phi)< \log d$ made on the
 weight $\phi$ as the mass of the current $(f_*)^j \omega_\FS$ is equal to $d^{(k-1)j}$.

We will establish in Section \ref{s:preliminary} some general criteria, interesting in themselves,
which allow one to bound the oscillation of $c_n^{-1} \Ll^ng$ in terms of the oscillation
of the potentials of the current in the RHS of \eqref{e:intro-ddc}. 
This latter oscillation is actually controllable.
Assumption {\bf (A)} allows us to have a simple
control which makes the estimates less technical but such a control exists without Assumption {\bf (A)}.

Combining all these ingredients, 
the existence and uniqueness of the equilibrium state and conformal measure, as well as the equidistribution of preimages
and the equality $P(\phi)=\log \lam$,
 follow from standard arguments
that we recall in Sections
\ref{s:equidistributions-L} and
\ref{s:proof-t:main-further}
 for completeness. 
We also prove 
that the entropy of $\mu_\phi$ is larger than
$k \log d - \Omega (\phi) > (k-1)\log d$, and that all the Lyapunov 
exponents of $\mu_\phi$ are strictly positive,
 see Proposition \ref{p:large-entropy}.
This also leads to a lower bound for the Hausdorff dimension of
$\mu_\phi$.
In Section \ref{s:pern}
we establish
  the equidistribution of repelling periodic  points with respect to $\mu_\phi$,
   see Theorem
  \ref{t:equidistr-pern}, which 
  completes the proof
  of Theorem \ref{t:main}. This result is  
due to Lyubich \cite{lyubich1983entropy} (for $k=1$)
 and Briend-Duval
\cite{briend1999exposants} (for any $k\geq 1$)
 when $\phi=0$, and   
is  new even for $k=1$ otherwise.

\medskip

 In the second part of our study
\cite{bd-eq-states-part2},
we will prove that the Perron-Frobenius operator and its
complex
perturbations admit spectral gaps, and deduce several statistical properties of the equilibrium states
through a unified method.

\medskip\noindent
{\bf Outline of the organization of the paper.} 
In Section \ref{s:preliminary}, we introduce some useful notions
 and establish comparison principles for currents and potentials that 
 will be the technical key to prove Theorem
 \ref{t:main}.
 We also
 present the estimates on the sequence $f^n_*\omega_\FS$
 (and on their potentials)
  that we will need in the sequel.
 Section \ref{s:lambda-rho} is dedicated to the proof of Theorem \ref{t:main-lam}.
 For this purpose, we develop our method to get the uniform boundedness
 and equicontinuity for the sequence $\Ll^n g$, properly normalized, 
 that lead to the good definition 
 of the scaling ratio $\lambda$. 
Once this is done,
 we will complete the proof of
Theorem \ref{t:main} in Section \ref{s:new-properties}.

\medskip\noindent 
\textbf{Acknowledgements.}
The first author would like to thank the National University
of Singapore (NUS) for its support and hospitality during the visits where this project started and developed, and 
Imperial College London were he was based during the first part of this work.

This project has received funding from the European Union’s Horizon 2020
Research and Innovation Programme under the Marie Skłodowska-Curie grant agreement No
796004,
 the French government through the Programme Investissement d'Avenir (I-SITE ULNE / ANR-16-IDEX-0004 ULNE and 
LabEx CEMPI /ANR-11-LABX-0007-01) managed by the Agence Nationale de la Recherche,
the CNRS  through the program PEPS JCJC 2019,
and the NUS 
and MOE
through the grants C-146-000-047-001,
R-146-000-248-114, 
and
MOE-T2EP20120-0010.

\section{Dynamical potentials and some comparison principles}\label{s:preliminary}

\subsection{$\log^p$-continuous functions}\label{ss:def}

 We will use the following notations
throughout the paper.

\begin{definition}
Given a subset $U$ of $\P^k$ or $\C^k$ and a real-valued function $g\colon U \to \R$,
define the \emph{oscillation} $\Omega_U (g)$ of $g$ as
\[\Omega_U (g) := \sup g - \inf g\]
and  its continuity modulus $m_U(g,r)$ at distance $r$ as
\[m_U(g,r) := \sup_{x,y\in U\colon \dist(x,y)\leq r} |g(x)-g(y)|.\]
We may drop the index $U$ when there is no possible confusion. 
\end{definition}

\begin{definition}\label{defi_logp_cont}
The semi-norm  $\norm{\cdot}_{\log^p}$
is defined for every $p>0$ and $g\colon \P^k \to \R$ as
\[\norm{g}_{\log^p} := \sup_{a,b\in \P^k} |g(a)-g(b)| \cdot (\log^\star \dist(a,b))^p=
\sup_{r>0, a\in \P^k} \Omega_{\B_{\P^k}(a,r)}(g)\cdot (1+|\log r|)^p,\]
where $\B_{\P^k}(a,r)$ denotes the ball of center $a$ and radius $r$ in $\P^k$.
\end{definition}

 The following technical lemma will be used
in Section \ref{s:lambda-rho}.
 
\begin{lemma}\label{l:g-Cs-logp}
For every $\log^p$-continuous function $g\colon \P^k \to \R$, $p>0$, $s\geq 1$, 
and $0<\eps\leq 1$, there exist continuous functions $g_\eps^{(1)}$ and $g_\eps^{(2)}$ such that
$$g = g_\eps^{(1)} + g_\eps^{(2)},\qquad 
\|g_\eps^{(1)}\|_{\Cc^s} \leq 
c \norm{g}_{\infty} e^{ (1/\eps)^{1/p}}, \qquad \text{and} \qquad  
\|g_\eps^{(2)}\|_{\infty} \leq c \norm{g}_{\log^p} \eps,$$
where $c=c(p,s)$  is a positive constant independent of $g$ and $\epsilon$.
In particular, for every $n \geq 1$ there exist $g_n^{(1)}$ of class $\Cc^2$ and $g_n^{(2)}$ continuous 
such that
$$g = g_n^{(1)} + g_n^{(2)},\qquad \|g_n^{(1)}\|_{\Cc^2} \leq  c \norm{g}_{\infty}  e^{{1\over 2}n^{2/p}},
 \qquad \text{and} \qquad  \|g_n^{(2)}\|_{\infty} \leq c  \norm{g}_{\log^p} n^{-2}.$$
\end{lemma}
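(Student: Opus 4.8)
The plan is to obtain the first decomposition by a standard mollification argument, carefully tracking how the smoothing parameter interacts with the $\log^p$-modulus of continuity, and then to derive the second assertion by choosing $\eps$ as an explicit function of $n$. First I would fix a smooth bump function $\chi \geq 0$ on $\C^k$ (or work in local charts on $\P^k$, patched by a partition of unity) with $\int \chi = 1$ and $\supp \chi \subset \B_1^k$, and set $\chi_\delta(x) := \delta^{-2k}\chi(x/\delta)$. Define $g^{(1)}_\eps := g * \chi_\delta$ (the convolution taken in charts, using the partition of unity to globalize on $\P^k$) and $g^{(2)}_\eps := g - g^{(1)}_\eps$, where $\delta = \delta(\eps)$ is to be chosen. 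The two estimates to verify are then: a $\Cc^s$-bound on the mollification, and a sup-bound on the error.

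For the error term, the pointwise bound $|g(x) - g^{(1)}_\eps(x)| \leq \int |g(x) - g(x-y)|\,\chi_\delta(y)\,dy \leq m(g,\delta) \lesssim \|g\|_{\log^p}(\log^\star \delta)^{-p}$ holds directly from Definition \ref{defi_logp_cont}, since $\chi_\delta$ is supported in a ball of radius $\delta$ and (for $\delta$ small) $\log^\star\delta = 1+|\log\delta|$. So requiring $(\log^\star\delta)^{-p} \asymp \eps$, i.e. $\log^\star \delta \asymp \eps^{-1/p}$, i.e. $\delta \asymp e^{-(1/\eps)^{1/p}}$, yields $\|g^{(2)}_\eps\|_\infty \leq c\|g\|_{\log^p}\eps$ as desired. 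For the $\Cc^s$-bound, differentiating under the integral sign gives $\|D^\alpha g^{(1)}_\eps\|_\infty = \|g * D^\alpha\chi_\delta\|_\infty \leq \|g\|_\infty \|D^\alpha\chi_\delta\|_{L^1} \lesssim \|g\|_\infty \delta^{-|\alpha|}$ for every multi-index $\alpha$ with $|\alpha| \leq s$; hence $\|g^{(1)}_\eps\|_{\Cc^s} \lesssim \|g\|_\infty \delta^{-s}$. With the choice of $\delta$ above, $\delta^{-s} = e^{s(1/\eps)^{1/p}}$, which is bounded by a constant (depending on $s$, absorbing the exponent) times $e^{(1/\eps)^{1/p}}$ — here one uses that $0 < \eps \leq 1$ so $(1/\eps)^{1/p} \geq 1$ and the factor $s$ in the exponent can be traded for the freedom in the constant $c(p,s)$, or alternatively one keeps $e^{s(1/\eps)^{1/p}}$ and notes the statement as written already allows a $p,s$-dependent constant; a cleaner route is to reparametrize $\delta \asymp e^{-\frac{1}{s}(1/\eps)^{1/p}}$ from the start so that the $\Cc^s$-norm is exactly $\lesssim \|g\|_\infty e^{(1/\eps)^{1/p}}$, at the mild cost of a larger constant in front of $\eps$ in the error bound, which is still admissible. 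This gives the first decomposition; note it uses only $\|g\|_\infty < \infty$ and $\|g\|_{\log^p} < \infty$, both finite since $g$ is $\log^p$-continuous.

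The "in particular" statement follows by specialization: apply the first part with $s = 2$ and $\eps := c(p)^{-1}\|g\|_{\log^p}^{-1} \cdot (\text{something} \asymp n^{-2})$ — more simply, choose $\eps \asymp n^{-2}$ (rescaling the constant so that $c\|g\|_{\log^p}\eps \leq c'\|g\|_{\log^p}n^{-2}$), set $g^{(1)}_n := g^{(1)}_\eps$, $g^{(2)}_n := g^{(2)}_\eps$. Then $\|g^{(2)}_n\|_\infty \leq c\|g\|_{\log^p}n^{-2}$ directly, and $\|g^{(1)}_n\|_{\Cc^2} \lesssim \|g\|_\infty e^{(1/\eps)^{1/p}} = \|g\|_\infty e^{(n^2/c)^{1/p}} \leq c''\|g\|_\infty e^{\frac12 n^{2/p}}$, where the last inequality holds for a suitable constant because $(n^2)^{1/p} = n^{2/p}$ and any fixed positive constant multiplying $n^{2/p}$ in the exponent can, for $n$ large, be dominated by $\frac12 n^{2/p}$ if we had chosen $\eps$ slightly smaller, or absorbed into $c''$ for the finitely many small $n$; again, reparametrizing $\eps$ by a $p$-dependent constant from the outset makes the exponent exactly $\frac12 n^{2/p}$.

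The only genuine subtlety — and the step I would be most careful about — is the globalization from $\C^k$ to $\P^k$: convolution is not intrinsically defined on a manifold, so one works in a finite atlas with a subordinate partition of unity $\{\psi_i\}$, mollifies each $\psi_i g$ in its chart, and sums back. One must check that (i) the $\Cc^s$-norms of the cutoffs $\psi_i$ and the chart transition maps contribute only fixed constants (depending on $p,s$ through the fixed geometry of $\P^k$ with its Fubini–Study metric, hence absorbable into $c(p,s)$), and (ii) the modulus-of-continuity estimate survives this patching, which it does because the Euclidean and Fubini–Study distances are comparable on each chart with uniform constants and $m(\cdot,\cdot)$ is subadditive over the finite sum. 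Everything else is the routine mollification calculus sketched above.
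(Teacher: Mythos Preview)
Your proposal is correct and follows essentially the same approach as the paper: reduce to charts via a partition of unity, mollify with a bump of scale $\delta \asymp e^{-c(1/\eps)^{1/p}}$ (the paper writes $\nu = e^{-1/(M\eps)^{1/p}}$ with $M$ large, which is precisely your reparametrization to absorb the factor $s$ in the exponent), and for the second assertion take $\eps = 2^p n^{-2}$. One small slip: your remark that the multiplicative constant $c(p,s)$ could absorb $e^{s(1/\eps)^{1/p}}$ into $e^{(1/\eps)^{1/p}}$ is not right, but you immediately give the correct fix via reparametrization, which is exactly what the paper does.
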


\begin{proof}
Clearly, the second assertion is a consequence of the first one by taking $\epsilon=2^pn^{-2}$
and replacing $c$ by $2^p c$.
 We prove now the first assertion.
Using a partition of unity, we can reduce the problem to the case where $g$ is
supported by the unit ball of an affine chart $\C^k\subset\P^k$.

Consider a smooth non-negative function $\chi$ with support in
 the unit ball of $\C^k$ whose integral with respect to the Lebesgue measure is 1. 
For $\nu>0$, consider the function $\chi_\nu(z):=\nu^{-2k}\chi(z/\nu)$
which has integral 1 and tends to the Dirac mass at 0 when $\nu$ tends to 0. 
Define an approximation of $g$ using the standard convolution operator $g_\nu:=g*\chi_\nu$,
and define $g_\eps^{(1)}:=g_\nu$ and $g_\eps^{(2)}:=g-g_\nu$.
We consider $\nu:=e^{-1/(M\eps)^{1/p}}$ for some constant $M>0$ large enough.
It remains to bound $\|g^{(1)}_\eps\|_{\Cc^s}$ and $\|g^{(2)}_\eps\|_\infty$. 

By standard properties of the convolution we have, for some constant $\kappa>0$,
$$\|g^{(2)}_\eps\|_\infty \lesssim m(g, \kappa\nu) 
\lesssim \norm{g}_{\log^p}(\log^\star \nu)^{-p} \lesssim \norm{g}_{\log^p}\epsilon$$
and, by definition of $g_\nu$,
$$\|g^{(1)}_\eps\|_{\Cc^s} \lesssim \norm{g}_\infty \norm{\chi_\nu}_{\Cc^s} \Leb (\B_\nu^k)\lesssim
\norm{g}_\infty \nu^{-s} \lesssim \norm{g}_\infty e^{(1/\epsilon)^{1/p}},$$ 
where we use the fact that $M$ is large enough. This ends the proof of the lemma.
\end{proof}

\subsection{Dynamical potentials}\label{ss:dyn-pot}
Let $T$ denote the Green $(1,1)$-current
of $f$. It is positive closed and of unit mass. Let $S$ be any positive closed
$(1,1)$-current of mass $m$ on $\P^k$. There is a unique function $u_S\colon \P^k \to \R \cup \{-\infty\}$ 
which is p.s.h.\ modulo $mT$
 and such that
\[S = mT + dd^c u_S \qquad \text{and} \qquad \langle \mu, u_S\rangle=0.\]
Locally, $u_S$ is the difference between 
a potential of $S$ and a potential of $mT$. We call it {\it the dynamical potential} of $S$.
Observe that the dynamical potential of $T$ is zero, i.e., $u_T=0$. 

Recall that 
$T$ has H\"older continuous potentials. 
So, $u_S$ is locally the difference
between a p.s.h.\ function and a H\"older continuous one.
  The dynamical potential of  $S$ behaves well
 under the
  push-forward and pull-back
operators associated to $f$. Indeed, because of the invariance properties of $T$, we have
\[f^* S = md\cdot T + dd^c  (u_S \circ f) \qquad \mbox{and} \qquad
f_* S = md^{k-1} \cdot T + dd^c (f_* u_S),\]
which, together with the invariance properties of $\mu$, imply
$$u_{f^*S}=u_S\circ f \qquad \text{and} \qquad  u_{f_*S}=f_*u_S.$$
We refer the reader to  \cite{dinh2010dynamics} for details. 
In this paper, we 
only need currents $S$ such that $u_S$ is continuous.

\subsection{Comparisons between currents and their potentials} 
A technical key point in the proof of our main theorem
 will be based on the following general 
idea:  if $u$ and $v$ are two functions on some domain in $\C^k$ 
such that $\abs{dd^c u}\leq dd^c v$,
 then $u$ inherits
some of the regularity properties of $v$.  This section
 is
 devoted to make this idea precise and 
quantitative for our purposes.
We start with the simplest occurrence of this fact
in the first case 
in terms of the sup-norm.

\begin{lemma}\label{l:compare-sup}
There exists a positive constant $A$ such that, for every 
 positive closed $(1,1)$-current $S_0$
 on $\P^k$ of mass $1$ and 
for every  positive closed $(1,1)$-current $S$ on $\P^k$ with $S\leq S_0$,
we have
$\Omega (u_S) \leq A + \Omega (u_{S_0})$, where $u_{S_0}$ and $u_S$
denote the dynamical potentials of $S_0$ and $S$, respectively.
\end{lemma}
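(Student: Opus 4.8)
\textbf{Proof plan.} The plan is to reduce everything to a standard compactness property of quasi-plurisubharmonic functions with respect to the Fubini--Study form. The only real subtlety is that $T$ does not have smooth potentials, so one first transfers the problem from dynamical potentials (which are only p.s.h.\ modulo a multiple of $T$) to genuine $\omega_\FS$-potentials, at the cost of a bounded error coming from the oscillation of the Green potential of $T$.

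First I would observe that $S\leq S_0$ forces the mass $m:=\|S\|$ to satisfy $m\leq\|S_0\|=1$, by pairing the positive current $S_0-S$ with $\omega_\FS^{k-1}$. If $\Omega(u_{S_0})=+\infty$ there is nothing to prove, so assume $u_{S_0}$ is bounded. Write $T=\omega_\FS+dd^c\gamma$, where $\gamma$ is the global (H\"older) continuous Green potential of $T$, so that $\Omega(\gamma)<\infty$. For a positive closed $(1,1)$-current $R$ of mass $m_R\leq 1$, let $v_R$ denote its $\omega_\FS$-potential normalized by $\langle\omega_\FS^k,v_R\rangle=0$, i.e.\ $R=m_R\omega_\FS+dd^c v_R$; comparing with $R=m_RT+dd^c u_R$ and using that pluriharmonic functions on $\P^k$ are constant, one gets that $v_R-u_R-m_R\gamma$ is a constant, hence $|\Omega(u_R)-\Omega(v_R)|\leq m_R\,\Omega(\gamma)\leq\Omega(\gamma)$. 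Thus it is enough to find a universal constant $A'$ with $\Omega(v_S)\leq A'+\Omega(v_{S_0})$, and then set $A:=A'+2\Omega(\gamma)$.

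For the $\omega_\FS$-version I would argue as follows. From $0\leq S\leq S_0$ one has $dd^c v_S=S-m\omega_\FS\geq-\omega_\FS$ and $dd^c(v_{S_0}-v_S)=(S_0-S)-(1-m)\omega_\FS\geq-\omega_\FS$, so $v_S$ and $v_{S_0}-v_S$ are both $\omega_\FS$-p.s.h.\ and both have zero mean against $\omega_\FS^k$. By the standard compactness of the family of $\omega_\FS$-p.s.h.\ functions with zero mean, there is a universal $C_0>0$ with $\sup_{\P^k}w\leq C_0$ for every such $w$; applying this to $v_S$ and to $v_{S_0}-v_S$ gives $\sup v_S\leq C_0$ and $v_S\geq v_{S_0}-C_0\geq\inf v_{S_0}-C_0$, whence
\[
\Omega(v_S)=\sup v_S-\inf v_S\leq 2C_0-\inf v_{S_0}=2C_0+\Omega(v_{S_0})-\sup v_{S_0}\leq 2C_0+\Omega(v_{S_0}),
\]
where in the last step we used $\sup v_{S_0}\geq\langle\omega_\FS^k,v_{S_0}\rangle=0$. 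Taking $A'=2C_0$ concludes. The main (modest) obstacle is purely organizational: keeping track of the normalizing constants in the passage $u_R\leftrightarrow v_R$ and making sure the non-smoothness of $T$ only enters through the harmless term $\Omega(\gamma)$; no distortion estimate or hard PDE input is needed.
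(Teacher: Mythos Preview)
Your proof is correct and follows the same strategy as the paper's: bound $\sup$ of the potential uniformly via compactness of normalized quasi-p.s.h.\ functions, then use the complementary current $S':=S_0-S$ (also positive closed and $\leq S_0$) to control $\inf u_S$ in terms of $\inf u_{S_0}$. The only difference is that the paper works directly with the dynamical potentials $u_R$ (normalized by $\langle\mu,u_R\rangle=0$, invoking that quasi-p.s.h.\ functions are $\mu$-integrable to get the uniform upper bound), whereas your preliminary transfer to $\omega_\FS$-potentials is an unnecessary but harmless detour that trades the $\mu$-integrability citation for the extra additive constant $2\Omega(\gamma)$.
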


\begin{proof}
We assume that $\Omega (u_{S_0})$ is finite, since otherwise the assertion trivially holds. 
Observe that the mass $m$ of $S$ is at most equal to 1 because $S\leq S_0$.
Recall that $u_{S}$ and $u_{S_0}$ satisfy
\[S =  mT + dd^c u_S,  \quad 
S_0 = T + dd^c u_{S_0}, \quad \langle \mu, u_S\rangle=0, \quad \mbox{ and } \quad  \langle \mu, u_{S_0}\rangle=0.\]
The last identity implies that $\sup u_{S_0}$ is non-negative.

We first prove that $u_S$ is bounded above by a constant. As mentioned above,
the\break 
correspondence between positive closed $(1,1)$-currents and their dynamical potentials is a\break bijection. 
Moreover, we know that quasi-p.s.h.\ functions 
(i.e., functions that are locally difference between a p.s.h.\ and a smooth function)
are integrable with respect to $\mu$ \cite[Th.\ 1.35]{dinh2010dynamics}.
Since the set of positive closed $(1,1)$-currents of
mass less than or equal to 1  
is compact, $u_S$ belongs to a compact family of p.s.h.\ functions modulo $mT$.
We deduce that there is a constant $A>0$ independent of $S$ such
that $u_S\leq A/2$ on $\P^k$, see \cite[App.\ A.2]{dinh2010dynamics} for more details.
It follows that $\sup u_S \leq \sup u_{S_0} +A/2$ because  $\sup u_{S_0}$ is non-negative.

Consider the current $S': = S_0-S$ which is positive closed and smaller
than $S_0$. By the uniqueness of the dynamical potential, we have
$u_{S'} = u_{S_0}- u_{S}$, which implies
$u_{S} = u_{S_0} - u_{S'}$. Since $S'\leq S$, as above, we also have $\sup u_{S'}\leq A/2$. It follows that 
$$\inf u_S \geq \inf u_{S_0} - \sup u_{S'} \geq \inf u_{S_0}-A/2.$$
This estimate and the above inequality $\sup u_S \leq \sup u_{S_0} +A/2$
imply the lemma.
\end{proof}

\begin{corollary}\label{c:compare-sup}
There exists a positive constant $A$ such that
for every positive closed $(1,1)$-current $S_0$  on $\P^k$
and for every  continuous function $g\colon\P^k\to \R$ 
with
$\abs{dd^c g} \leq S_0$ we have
$\Omega(g) \leq A \norm{S_0} + 3\Omega (u_{S_0})$.
\end{corollary}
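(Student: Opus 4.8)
The plan is to deduce Corollary \ref{c:compare-sup} from Lemma \ref{l:compare-sup} by reducing to the unit-mass case and splitting the $dd^c$-bound into positive and negative pieces. First I would let $m := \norm{S_0}$. If $m = 0$ then $S_0 = 0$, so $dd^c g = 0$, and since $g$ is a continuous quasi-p.s.h.\ (indeed pluriharmonic) function on the compact $\P^k$ it is constant, giving $\Omega(g) = 0$; so assume $m > 0$ and rescale to $\widetilde S_0 := m^{-1} S_0$, a positive closed $(1,1)$-current of mass $1$. The hypothesis $\abs{dd^c g}\leq S_0$ means, by the definition of $\abs{\cdot}\leq\cdot$ recalled in the Notation section, that $S_0 \pm dd^c g \geq 0$ (taking $\xi = \pm 1$), and more: for every $\xi$ with $|\xi|\le 1$ the current $S_0 - \Re(\xi\, dd^c g)$ is positive. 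In particular, both $S_+ := \tfrac12(S_0 + dd^c g)$ and $S_- := \tfrac12(S_0 - dd^c g)$ are positive currents, they are closed, and each satisfies $S_\pm \leq S_0$.

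Next I would pass to dynamical potentials. Since $g$ is continuous (hence quasi-p.s.h., being locally the difference of the p.s.h.\ local potential of $S_+$ scaled and a smooth function — more directly $dd^c g$ is a difference of positive closed currents with continuous potentials), the dynamical potentials of $S_+$ and $S_-$ are continuous, and by uniqueness of the dynamical potential together with $dd^c(\tfrac12 g - \langle\mu,\tfrac12 g\rangle\cdot 1)$ having the right normalization, we get $u_{S_+} - u_{S_-} = dd^c$-primitive data that forces $u_{S_+} - u_{S_-} = g - \langle\mu, g\rangle$ up to the normalization $\langle\mu,\cdot\rangle = 0$; concretely, since $S_+ - S_- = dd^c g$ and both sides have mass-$m/2$ pieces of $T$ cancelling, $u_{S_+} - u_{S_-} = g - \langle\mu, g\rangle$. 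Hence $\Omega(g) = \Omega(u_{S_+} - u_{S_-}) \leq \Omega(u_{S_+}) + \Omega(u_{S_-})$. Now apply Lemma \ref{l:compare-sup} to each of $S_+, S_-$ relative to $S_0$: but Lemma \ref{l:compare-sup} is stated for a mass-$1$ dominating current, so I would instead apply it to $\tfrac{1}{m}S_\pm \leq \widetilde S_0$, giving $\Omega(u_{S_\pm/m}) \leq A + \Omega(u_{\widetilde S_0})$. Finally one uses the scaling behaviour of dynamical potentials: $u_{cS} = c\, u_S$ for $c > 0$ (immediate from $cS = (cm)T + dd^c(c\,u_S)$ and $\langle\mu, c\,u_S\rangle = 0$), so $\Omega(u_{S_\pm}) = m\,\Omega(u_{S_\pm/m}) \leq m(A + \Omega(u_{\widetilde S_0})) = Am + \Omega(u_{S_0})$, where the last equality again uses $u_{S_0} = m\, u_{\widetilde S_0}$. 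Summing the two contributions yields $\Omega(g) \leq 2Am + 2\Omega(u_{S_0})$, which is a bound of the claimed shape $A'\norm{S_0} + 3\Omega(u_{S_0})$ after relabelling the constant (the coefficient $3$ versus $2$ gives slack, presumably absorbing a harmless term elsewhere).

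I expect the only genuinely delicate point to be the identification $u_{S_+} - u_{S_-} = g - \langle\mu, g\rangle$ and the verification that $u_{S_\pm}$ are honest continuous functions so that $\Omega(\cdot)$ of them makes sense — this rests on the fact that $dd^c g$ is globally a difference of two positive closed $(1,1)$-currents each dominated by $S_0$, which is exactly what $\abs{dd^c g}\le S_0$ encodes, together with the uniqueness clause in the definition of the dynamical potential from Section \ref{ss:dyn-pot}. The rescaling bookkeeping (factors of $m$) and the additivity $\Omega(a - b)\le\Omega(a)+\Omega(b)$ are routine. One minor subtlety worth stating explicitly is the degenerate case $\norm{S_0}=0$ handled at the start, and the fact that $\Omega(u_{S_0})$ could a priori be infinite, in which case the inequality is vacuous — so we may assume it finite, exactly as in the proof of Lemma \ref{l:compare-sup}.
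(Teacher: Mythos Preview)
Your proof is correct and takes essentially the same route as the paper: decompose $dd^c g$ as a difference of positive closed currents dominated by (a multiple of) $S_0$, identify $g$ up to a constant with the difference of the corresponding dynamical potentials, and apply Lemma~\ref{l:compare-sup}. The paper uses the asymmetric splitting $dd^c g = (dd^c g + S_0) - S_0$ and compares $dd^c g + S_0$ against $2S_0$ (whence the coefficient $3 = 2+1$), whereas your symmetric splitting $S_\pm = \tfrac12(S_0\pm dd^c g)\leq S_0$ yields the slightly sharper coefficient $2$, which you correctly note is absorbed by the stated $3$.
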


\begin{proof}
By linearity
 we can assume that $S_0$ is of mass $1/2$.
Define $R:=dd^c g$ and write it as a difference of positive closed currents,
$R= (R+S_0)-S_0$. Since $R+S_0$ and $S_0$ belong to the same cohomology class, they have the same mass $1/2$.
We denote as usual by $u_{R+S_0}$ and $u_{S_0}$ the dynamical potentials
of $R+S_0$ and $S_0$ respectively. 

A direct computation gives $\ddc (g-u_{R+S_0}+u_{S_0})=0$ which implies that $g-u_{R+S_0}+u_{S_0}$ is a constant function. 
Thus,
\[
\Omega (g) =\Omega(u_{R+S_0}-u_{S_0}) \leq \Omega (u_{R+S_0}) + \Omega (u_{S_0}).
\]
The assertion follows from Lemma \ref{l:compare-sup} applied to $R+S_0, 2S_0$ instead of $S,S_0$. We use here 
the fact that $R+S_0 =\ddc g+ S_0 \leq 2S_0$ and that $2S_0$ is of mass 1.
We also use a constant $A$ which is equal to twice
the one in Lemma \ref{l:compare-sup}.
\end{proof}

The following result 
gives a quantitative
control on the oscillation of $u$ in terms of
 the oscillation of $v$. Notice in particular
that it implies that, if $v$ is H\"older or $\log^p$-continuous for some $p>0$, then
$u$ enjoys the same property with possibly a loss in the H\"older exponent, but not in the $\log^p$-exponent.

\begin{proposition}\label{p:mod-cont}
Let $u$ and $v$ be two p.s.h.\ functions on  $\B_3^k$
such that $\ddc u \leq \ddc v$ and $v$ is continuous.
Then $u$ is continuous and for every $0<s\leq 1$
there is a positive constant $A$ 
(independent of $u$ and $v$)
 such that, for every $0<r\leq 1/2$, we have 
\[m_{\B^k_1} (u, r) \leq m_{\B_{2}^k} (v, r^s) + A m_{\B_{2}^k}(u,r^s) r^{1-s}
\leq m_{\B_{2}^k} (v, r^s) + A\Omega_{\B_{2}^k} (u) r^{1-s}.
\]
\end{proposition}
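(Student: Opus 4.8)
The plan is to compare $u$ with a harmonic (or, more precisely, maximal) function having the same boundary values as $v$ on a ball, and use the sub-mean-value property of $u-v$ together with known interior estimates for solutions of $\ddc$-inequalities. First I would fix a point $x\in\B_1^k$ and a radius $r\le 1/2$, and consider the function $w$ defined on $\B(x,r^s)\subset\B_2^k$ that solves the homogeneous complex Monge--Amp\`ere-type problem (or simply the Laplace problem, since we only need a $(1,1)$-current bound, not the full MA operator) with boundary data $v|_{\partial\B(x,r^s)}$; concretely, since $\ddc u\le\ddc v$, the difference $v-u$ is p.s.h.\ on $\B(x,r^s)$, hence satisfies the sub-mean-value inequality, and in particular is dominated at the center by its average over the boundary. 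The idea is that on the smaller ball $\B(x,r)$ the function $u$ cannot oscillate more than $v$ does on the larger ball $\B(x,r^s)$, up to an error coming from the ``harmonic part'' of $u$, which is controlled by the gradient estimate for harmonic functions: a harmonic function on $\B(x,r^s)$ has, on $\B(x,r)$, oscillation at most a constant times $(r/r^s)=r^{1-s}$ times its oscillation on the full ball.

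More precisely, the key step is the following decomposition. On $\B(x,r^s)$ write $u = h + (u-h)$ where $h$ is the (pluri)harmonic function with the same boundary values as $u$. Then $u-h\le 0$ by the maximum principle (as $u$ is p.s.h., hence subharmonic), and $u-h \ge -(v-h') + \text{const}$ type bounds using $\ddc u\le\ddc v$: the cleanest way is to note $u - v$ is $\ddc$-negative, so $u-v$ is superharmonic-dominated... Actually the slick route is: since $\ddc(v-u)\ge 0$, both $u$ and $v$ are p.s.h., and one compares $u$ directly to $v$ minus the solution of the Dirichlet problem for $v-u$. Let $\psi$ solve $\ddc\psi=0$ on $\B(x,r^s)$ (pluriharmonic) with $\psi=v-u$ on the boundary; then $v-u\le\psi$ on the ball (maximum principle for p.s.h.\ functions), hence $u\ge v-\psi$ there, while on the boundary $u=v-\psi$. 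So $u$ and $v-\psi$ agree on $\partial\B(x,r^s)$ and $u\ge v-\psi$ inside. Now estimate $m_{\B_1^k}(u,r)$ by splitting: the oscillation of $v-\psi$ is controlled by $m(v,r^s)$ plus the oscillation of $\psi$ on the small ball $\B(x,r)$; the latter, since $\psi$ is pluriharmonic, is $\lesssim \Omega_{\B(x,r^s)}(\psi)\cdot r^{1-s}$ by the interior gradient estimate, and $\Omega_{\B(x,r^s)}(\psi)\lesssim\Omega_{\partial\B(x,r^s)}(v-u)\lesssim m_{\B_2^k}(v,r^s)+m_{\B_2^k}(u,r^s)$. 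Combining these and using $u\ge v-\psi$ with equality on the relevant sphere gives the two-term bound, and the final inequality follows from $m_{\B_2^k}(u,r^s)\le\Omega_{\B_2^k}(u)$.

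The continuity of $u$ is then immediate: the right-hand side of the inequality tends to $0$ as $r\to 0$ once one knows $\Omega_{\B_2^k}(u)<\infty$, which holds because $u$ is p.s.h.\ on a neighborhood of $\overline{\B_2^k}$ and p.s.h.\ functions are locally bounded above, while the lower bound $u\ge v-\psi\ge -\|v\|_\infty-\Omega(\psi)$ is uniform; more carefully, one first runs the argument to get that $u$ is bounded on $\B_2^k$ (comparing to $v$ and a fixed harmonic majorant), and then the modulus-of-continuity estimate upgrades this to uniform continuity on $\B_1^k$. I expect the main obstacle to be bookkeeping the geometry of the two nested balls and the dependence of constants: one must check that $\B(x,r^s)\subset\B_2^k$ for all $x\in\B_1^k$ and $r\le 1/2$ (true since $r^s\le r^{\,0}$... no: since $s\le 1$ and $r\le 1/2<1$ we have $r^s\ge r$ but also $r^s\le 1$, so $\B(x,r^s)\subset\B(x,1)\subset\B_2^k$), and that the gradient estimate constant for pluriharmonic functions and the maximum-principle comparisons are applied on the correct domains with constants independent of $x$, $r$, $u$, $v$. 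The scaling $r\mapsto r^s$ is exactly what converts the linear-in-radius gradient estimate into the $r^{1-s}$ loss, and choosing to solve the Dirichlet problem on the $r^s$-ball rather than the $r$-ball is the crucial trick that lets the error term carry the factor $(r/r^s)=r^{1-s}$.
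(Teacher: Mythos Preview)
Your approach via harmonic extensions and interior gradient estimates is genuinely different from the paper's (which restricts to a complex line, reducing to $k=1$, and then uses a M\"obius automorphism of the disc to recenter the sub-mean-value inequality at $y$, with the radial-average comparison $\tilde u\le\tilde v$ replacing your Dirichlet construction). Both routes ultimately exploit the subharmonicity of $v-u$ in the same way, but there are two real problems with your write-up.

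First, the Dirichlet problem for \emph{pluriharmonic} functions is overdetermined when $k\ge 2$: you cannot prescribe arbitrary continuous boundary data on a sphere and find $\psi$ with $\ddc\psi=0$. You must use the ordinary Laplacian (which you mention parenthetically); the comparison $v-u\le\psi$ still holds since p.s.h.\ implies subharmonic, and the interior gradient estimate for harmonic functions gives the same $r^{1-s}$ gain. Second, and more seriously, your chosen decomposition with $\psi$ having boundary data $v-u$ yields $\Omega_{\B(x,r^s)}(\psi)\lesssim m(v,r^s)+m(u,r^s)$, so the gradient-estimate error is $A r^{1-s}\big(m(v,r^s)+m(u,r^s)\big)$; this does \emph{not} give the stated inequality, whose $m(v,r^s)$ term carries coefficient exactly~$1$ and whose error involves only $u$. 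Your first instinct---take $h$ harmonic with boundary values $u$, so $u\le h$---is actually the correct one: then $u(y)-u(x)\le [h(y)-h(x)]+[h(x)-u(x)]$, the first bracket is $\lesssim r^{1-s}\Omega_{\partial\B(x,r^s)}(u)$ by the gradient estimate, and the second equals the spherical average of $u$ minus $u(x)$, which by subharmonicity of $v-u$ is at most the spherical average of $v$ minus $v(x)\le m(v,r^s)$. This gives precisely the two-term bound. Your $\psi$-route also never produces an \emph{upper} bound on $u(y)$ (you only have $u\ge v-\psi$), so the step ``combining these $\ldots$ gives the two-term bound'' is not justified as written. Finally, your continuity argument is slightly circular: you need $u$ bounded below before you can run the estimate, and the cleanest way (used in the paper) is to note that $u+u'$ is continuous for some p.s.h.\ $u'$ with $\ddc u'=\ddc(v-u)$, whence both $u,u'$, being u.s.c.\ with continuous sum, are continuous.
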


\begin{proof}
The continuity of $u$ is a well-known property. Indeed, since $\ddc v-\ddc u$ is a positive
closed $(1,1)$-current, there is a p.s.h.\ function 
$u'$ such that $\ddc u'=\ddc v- \ddc u$. So, both $u+u'$ and $v$ are
potentials of $\ddc v$. We deduce that they differ by a pluriharmonic function.
Hence $u+u'$ is continuous. We then easily deduce that both $u$ and $u'$ are continuous because both
 are p.s.h.\ (and hence u.s.c.).

We prove now the estimate in the lemma.
Let $x,y \in \B^k_1$ be
such that $\|x-y\| \leq r$.  We need to bound $u(y)-u(x)$.
Without loss of generality, we 
can reduce the problem
to the case $k=1$ by restricting ourselves to the complex line through $x$ and $y$.
Moreover, by translating and adding constants
to $u$ and $v$,
we can assume that
$x=0$, $|y|\leq r$, $u(x)=v(x)=0$, 
and $u(y)\geq 0$.
It is then enough
to prove
that
\[
u(y)
\leq m_{\D_1} (v, r^s)  + 
A \Omega_{\D_{r^s}} (u) r^{1-s}
\]
for some positive constant $A$ 
and for $u,v$ defined on $\D_2$.
Note that $\Omega_{\D_{r^s}} (u)\leq 2 m_{\D_1}(u,r^s)$.

\begin{claim}
We have, for some positive constant $A$,
\[
u (y)
\leq
\frac{1}{\Leb (\partial \D_{r^s})}
\int_{|z| = r^s}   u(z) d\Leb(z)
+ 
A\Omega_{\D_{ r^s}} (u) r^{1-s}.
\]
\end{claim}

Assuming the claim, we first complete the proof of the lemma. Let $\widetilde u$ (resp. $\widetilde v$) be the
radial subharmonic function on $\D_2$ such that $\widetilde u(z)$ (resp. $\widetilde v(z)$) is
equal to the mean value of $u$ (resp. $v$) on the circle of center 0 and radius $|z|$. 
Using the Claim, in order to obtain the lemma, it is enough to show that $\widetilde u\leq \widetilde v$. 

Recall that $v-u$ is a subharmonic function vanishing at 0. Therefore, $\widetilde v-\widetilde u$ is a radial subharmonic function vanishing at 0.
Radial subharmonic functions are increasing in $|z|$. Thus,  $\widetilde v-\widetilde u$ is a non-negative function and the lemma follows.
\end{proof}

\begin{proof}[Proof of the Claim]
Define $u'(z):=u(z r^s)$ and $y':=y/r^s$.
 We need to show that, for $|y'|\leq r^{1-s}$,
\[
u' (y')
\leq 
 \frac{1}{\Leb (\partial \D_1) }
\int_{\partial\D_1}  u'(z) d\Leb(z)
+ 
A\Omega_{\D_1} (u') r^{1-s}.
\]

We can assume, without loss of generality, 
that $y'=\alpha \in \R^+$ and $\alpha\leq r^{1-s}$.
Consider the automorphism $\Psi$ of the unit disc
 given by $\Psi (z) =\frac{z+\alpha}{1+\alpha z}$.
The map $\Psi$ satisfies $\Psi(0)= y'$ and moreover $\Psi$ 
extends smoothly to $\partial \D_1$ 
and tends to the identity in the $\Cc^1$ norm
as $\alpha\to 0$. It follows that $\norm{\Psi^{\pm 1} - \id}_{\Cc^1} \leq  A'\alpha \leq A' r^{1-s}$
 for some positive constant $A'$.

Define $u'' := u' \circ \Psi$ and denote by $\nu$ the normalized standard Lebesgue measure on the unit 
circle.
We deduce from the last inequalities that $\Psi_* \nu - \nu$ is 
given by
 a smooth 1-form on $\partial\D_1$ and 
$\|\Psi_* \nu - \nu\|_\infty=O(r^{1-s})$.
Applying the submean inequality to the subharmonic function $u''$ we get
\[
u'(y') 
 = u'' (0) \leq \langle \nu, u''\rangle = \langle \nu, u'\circ \Psi \rangle=
\langle \Psi_*\nu,u'\rangle 
 =
\langle\nu , u' \rangle
+ 
\langle \Psi_* \nu - \nu,u'\rangle.
\]
Since $\Psi_* \nu$ and $\nu$ are probability measures, the integral $\langle \Psi_* \nu - \nu,u'\rangle$
does not change if we add to $u'$ a constant $c$. With the choice
 $c=- \inf_{\D_1} u'$ (observe that $u'$ is continuous on $\overline\D_1$)
 we get
\[
u'(y') \leq \int_{\partial \D_1} u' \; d\nu + \sup_{\D_1} |u'+c|\, O(r^{1-s})
\leq   \int_{\partial \D_1} u' \; d\nu
+
A
\Omega_{\D_1} (u')
r^{1-s}
\]
for some positive constant $A$.
This
implies the desired inequality.
\end{proof}

\begin{corollary} \label{c:mod-cont}
Let $v$ be a continuous p.s.h.\ function on $\B^k_3$. Let $u$ be a continuous real-valued function on $\B_3^k$ such that $|\ddc u|\leq \ddc v$.
Then for every $0<s \leq 1$ 
we have for $0<r\leq 1/2$
\[
 m_{\B_1^k} (u, r) \leq 3 m_{\B_2^k} (v, r^s)
+ 
A\pa{\Omega_{\B_2^k} (u) + \Omega_{\B_2^k} (v)} r^{1-s},
\]
where $A$ is a positive
constant independent of $u$ and $v$.
\end{corollary}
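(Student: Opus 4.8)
The plan is to reduce Corollary \ref{c:mod-cont} to Proposition \ref{p:mod-cont} by splitting $u$ into a difference of two p.s.h.\ functions whose $\ddc$'s are both dominated by $\ddc v$, so that the already-established bound on the continuity modulus applies to each piece. Concretely, since $|\ddc u|\leq \ddc v$, both $\ddc v+\ddc u$ and $\ddc v-\ddc u$ are positive closed $(1,1)$-currents on $\B_3^k$, so (shrinking slightly, or invoking the local solvability of $\ddc$ on balls) there are p.s.h.\ functions $w_1,w_2$ on $\B_{5/2}^k$, say, with $\ddc w_1=\ddc v+\ddc u$ and $\ddc w_2=\ddc v-\ddc u$. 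Then $w_1+w_2$ and $2v$ are both potentials of $2\ddc v$, hence differ by a pluriharmonic (in particular smooth) function, and up to adjusting $w_1,w_2$ by smooth functions we may arrange $w_1+w_2=2v$. Now $2u=w_1-w_2$, and each $w_i$ satisfies $\ddc w_i\leq 2\ddc v=\ddc(2v)$.

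Next I would apply Proposition \ref{p:mod-cont} to the pair $(w_i,2v)$ on $\B_3^k$, which gives, for $0<r\leq 1/2$ and $0<s\leq 1$,
\[
m_{\B_1^k}(w_i,r)\leq m_{\B_2^k}(2v,r^s)+A\,\Omega_{\B_2^k}(w_i)\,r^{1-s}=2m_{\B_2^k}(v,r^s)+A\,\Omega_{\B_2^k}(w_i)\,r^{1-s}.
\]
Since $2u=w_1-w_2$ we get $m_{\B_1^k}(2u,r)\leq m_{\B_1^k}(w_1,r)+m_{\B_1^k}(w_2,r)\leq 4m_{\B_2^k}(v,r^s)+A\big(\Omega_{\B_2^k}(w_1)+\Omega_{\B_2^k}(w_2)\big)r^{1-s}$, hence $m_{\B_1^k}(u,r)\leq 2m_{\B_2^k}(v,r^s)+\tfrac{A}{2}\big(\Omega(w_1)+\Omega(w_2)\big)r^{1-s}$ on $\B_2^k$. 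It remains to bound the oscillations $\Omega_{\B_2^k}(w_i)$ in terms of $\Omega_{\B_2^k}(u)$ and $\Omega_{\B_2^k}(v)$: from $w_1+w_2=2v$ and $w_1-w_2=2u$ we have $w_1=u+v$ and $w_2=v-u$ (after fixing the additive constant appropriately), so $\Omega_{\B_2^k}(w_i)\leq \Omega_{\B_2^k}(u)+\Omega_{\B_2^k}(v)$. Plugging this in yields the claimed estimate with constant $3$ in front of $m_{\B_2^k}(v,r^s)$ after absorbing the factor $2$ into $3$ (replacing $A$ by a larger constant and noting $m_{\B_2^k}(v,r^s)\geq 0$), and a possibly enlarged $A$ in the error term.

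The only genuine subtlety — and the main thing to be careful about — is the bookkeeping of domains and additive constants: Proposition \ref{p:mod-cont} is stated on $\B_3^k$ with the modulus controlled on $\B_1^k$ via data on $\B_2^k$, while the decomposition $u=\frac12(w_1-w_2)$ with $w_1+w_2=2v$ only pins down $w_1,w_2$ up to a common additive constant and requires solving $\ddc$ on a ball slightly smaller than $\B_3^k$. Since continuity moduli and oscillations are insensitive to additive constants, and since one can always work on, say, $\B_{5/2}^k\supset\B_2^k$ (rescaling the statement of Proposition \ref{p:mod-cont}, whose constants are scale-invariant on balls, or simply noting the argument there is purely local), this causes no real loss; one just records that the identities $w_1=u+v+\mathrm{const}$, $w_2=v-u+\mathrm{const}$ hold on the relevant ball. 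Everything else is the routine triangle-inequality combination indicated above.
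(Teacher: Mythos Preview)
Your proposal is correct and lands on essentially the same idea as the paper: use that $u+v$ (and $v-u$) is p.s.h.\ with $\ddc(u\pm v)\leq 2\ddc v$, then invoke Proposition~\ref{p:mod-cont}. The paper is simply more direct: it skips the abstract step of ``solving $\ddc w_i=\ddc v\pm\ddc u$ on a slightly smaller ball'' and instead observes immediately that $u+v$ is a p.s.h.\ function on all of $\B_3^k$ with $\ddc(u+v)\leq 2\ddc v$, applies Proposition~\ref{p:mod-cont} once to the pair $(u+v,2v)$, and finishes via $m_{\B_1^k}(u,r)\leq m_{\B_1^k}(u+v,r)+m_{\B_1^k}(v,r)$ together with $m_{\B_1^k}(v,r)\leq m_{\B_2^k}(v,r^s)$. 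Your detour through abstract potentials, the domain-shrinking, and the rescaling remark are all unnecessary once you notice that the explicit choices $w_1=u+v$, $w_2=v-u$ are already defined on $\B_3^k$; in fact you yourself arrive at this identification at the end, so your argument could be shortened considerably by starting there.
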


\begin{proof}
Since $|\ddc u|\leq \ddc v$, we have $\ddc(u+v)=\ddc u+\ddc v\geq 0$. So the function $u+v$ is p.s.h.; 
observe also that 
$\ddc (u+v)= \ddc u+\ddc v\leq 2\ddc v$. Therefore, we can apply Proposition \ref{p:mod-cont} to  $u+v,2v$
instead of $u,v$. 
This gives
\[
\begin{aligned}
m_{\B_1^k} (u,r) 
& \leq m_{\B_1^k} (u+v,r) + m_{\B_1^k} (v,r)
\leq
m_{\B_2^k} (2v, r^s) + A\Omega_{\B^k_2} (u+v) r^{1-s} + m_{\B_2^k} (v,r)\\
& \leq
3m_{\B_2^k} (v, r^s)+ A\pa{\Omega_{\B^k_2}(u)+ \Omega_{\B^k_2}(v)}
r^{1-s},
\end{aligned}\]
which is the desired estimate.
\end{proof}

\begin{corollary} \label{c:mod-cont-bis}
Let $S_0$ be a positive closed $(1,1)$-current on $\P^k$ with continuous local
potentials. Let $\Fc(S_0)$ denote the set of all continuous real-valued
functions $g$ on $\P^k$ such that $|\ddc g|\leq S_0$. Then 
$\Fc(S_0)$ is equicontinuous.
\end{corollary}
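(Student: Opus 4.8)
The plan is to reduce the statement to the local estimate of Corollary \ref{c:mod-cont} by means of a finite covering of $\P^k$ by coordinate balls, together with the uniform bound on the oscillation of members of $\Fc(S_0)$ that follows from Corollary \ref{c:compare-sup}.

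First I would cover $\P^k$ by finitely many affine charts and, inside each, pick two concentric balls that, after rescaling, look like $\B_1^k\Subset\B_2^k\Subset\B_3^k$; arrange that the smaller balls already cover $\P^k$. On the larger ball $\B_3^k$ the current $S_0$ has a continuous local potential $v$, which we may take to be p.s.h.\ (adding a smooth p.s.h.\ function if necessary, at the cost of replacing $S_0$ by a slightly larger current, which does not affect equicontinuity). For $g\in\Fc(S_0)$ the hypothesis $|\ddc g|\le S_0=\ddc v$ holds on $\B_3^k$, so Corollary \ref{c:mod-cont} applies and gives, for every $0<s\le1$ and $0<r\le 1/2$,
\[
m_{\B_1^k}(g,r)\ \le\ 3\,m_{\B_2^k}(v,r^s)\ +\ A\bigl(\Omega_{\B_2^k}(g)+\Omega_{\B_2^k}(v)\bigr)\,r^{1-s}.
\]
Here $v$ is a fixed continuous function, hence uniformly continuous on the compact $\overline{\B_2^k}$, so $m_{\B_2^k}(v,\cdot)$ is a fixed modulus of continuity independent of $g$; and $\Omega_{\B_2^k}(v)$ is a fixed finite constant.

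The remaining point is to bound $\Omega_{\B_2^k}(g)$ uniformly in $g\in\Fc(S_0)$. This is exactly where Corollary \ref{c:compare-sup} enters: since $|\ddc g|\le S_0$ globally on $\P^k$, that corollary yields $\Omega_{\P^k}(g)\le A\|S_0\|+3\,\Omega(u_{S_0})$, a constant depending only on $S_0$; a fortiori $\Omega_{\B_2^k}(g)$ is bounded by the same constant. Plugging this into the displayed inequality, we obtain a bound on $m_{\B_1^k}(g,r)$ by a quantity $\omega(r)$ that depends only on $S_0$ (through $v$ and the above constants) and on the free parameter $s$; choosing, say, $s=1/2$, we get $\omega(r)=3\,m_{\B_2^k}(v,\sqrt r)+C\sqrt r\to 0$ as $r\to0$. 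Taking the maximum of the finitely many such local moduli over the covering, and noting that for $x,y\in\P^k$ with $\dist(x,y)$ small enough both points lie in a common small ball of the covering, we conclude that $\{m_{\P^k}(g,r):g\in\Fc(S_0)\}$ is bounded by a single modulus of continuity tending to $0$ with $r$, i.e.\ $\Fc(S_0)$ is equicontinuous.

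The main obstacle is purely bookkeeping: making the passage from the local balls $\B_1^k$ to the global distance on $\P^k$ precise (controlling what happens for pairs of points not lying in a common chart ball — which is handled by the fact that such pairs are at distance bounded below, where equicontinuity is automatic from the uniform bound on $\Omega_{\P^k}(g)$) and ensuring that replacing $S_0$ by a current with a global continuous p.s.h.\ potential on each chart does not weaken the hypothesis. No genuinely new estimate is needed beyond Corollaries \ref{c:compare-sup} and \ref{c:mod-cont}.
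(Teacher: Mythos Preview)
Your proposal is correct and follows essentially the same route as the paper: cover $\P^k$ by finitely many coordinate balls, apply Corollary~\ref{c:mod-cont} locally with $s=1/2$ using a continuous p.s.h.\ potential of $S_0$ on each chart, and invoke Corollary~\ref{c:compare-sup} to bound $\Omega_{\P^k}(g)$ uniformly in $g$. One small remark: the parenthetical about ``adding a smooth p.s.h.\ function if necessary'' is unnecessary---a positive closed $(1,1)$-current on a ball always admits a p.s.h.\ local potential, and the hypothesis of the corollary guarantees this potential can be taken continuous---so there is no need to enlarge $S_0$.
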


\begin{proof}
Let  $g$ be as in the statement.  
We cover $\P^k$ with
 a finite family of open sets of the
form $\Phi_j(\B_{1/2}^k)$ where $\Phi_j$ is an injective holomorphic map from $\B_4^k$ to $\P^k$. 
Write $S_0=\ddc v_j$ for some continuous p.s.h.\ function $v_j$ on $\Phi_j(\B_4^k)$ and
define $V_j:=\Phi_j(\B_3^k)$.  

We apply Corollary \ref{c:mod-cont} to $g$, $v_j$
restricted to $V_j$ instead of $u$, $v$ and to $s=1/2$. Taking into account the distortion
of the maps $\Phi_j$, we see that for all $r$ smaller than some constant $r_0>0$
\[
m_{\P^k}(g, r)
\leq 3 \max_j m_{V_j} (v_j, c\sqrt r) + A\Big(\Omega_{\P^k} (g) + \max_j \Omega_{V_j} (v_j)\Big) \sqrt{r},
\]
where $c\geq 1$ is a constant. Since $\Omega_{\P^k}(g)$ is bounded
by Corollary \ref{c:compare-sup}, 
the RHS of the last inequality is bounded by a constant $\epsilon_r$
which is independent of $g$ and tends to 0 when $r$ tends to 0.
It is now clear that the family $\Fc(S_0)$ is equicontinuous.
\end{proof}

\subsection{Dynamical potentials of  $(f^n)_* \omega_\FS$} \label{s:norm}
In
 this section we
consider the action of the operator
 $(f^n)_*$ on functions and currents. 
Some results and ideas here are of independent interest.
 Recall that we always assume that $f$ satisfies the Assumption {\bf (A)} in the Introduction.
 
 \medskip

\label{ss:bound-pot-push}
We start by giving estimates on the potentials
of the currents  $(f^n)_* \omega_\FS$. As explained in the Introduction,
these estimates will allow us to globally control the distortion of $f^n$.
Define
$$\omega_n := d^{-(k-1)n} (f^n)_* \omega_\FS.$$
Recall that $f_*$ multiplies the mass of a positive closed $(1,1)$-current
by $d^{k-1}$. Therefore, all currents $\omega_n$ have unit mass.
We denote by $u_n$ the dynamical potential of $\omega_n$. In particular,
 $u_0$ is the dynamical potential of $\omega_\FS$. It is known that $u_0$ is H\"older continuous,
see \cite{kosek1997holder, dinh2010dynamics}. 

Observe that $d^{-1}f^*\omega_\FS$ is a smooth positive closed $(1,1)$-form of
mass 1. Therefore, there is a unique smooth function $v$ such that
\[\ddc v= d^{-1}f^*\omega_\FS -\omega_\FS \qquad \text{and} \qquad \langle \mu,v\rangle =0.\]
 
 \begin{lemma} \label{l:u0}
 We have 
 $$u_n=d^{-(k-1)n}(f^n)_*u_0 \qquad \text{and}\qquad u_0= - \sum_{n=0}^\infty d^{-n} v\circ f^n.$$
 \end{lemma}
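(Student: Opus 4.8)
The proof is to verify the two stated identities using the defining relations of the dynamical potentials together with the functorial equalities $u_{f_*S}=f_*u_S$ and $u_{f^*S}=u_S\circ f$ already recorded in Section \ref{ss:dyn-pot}.

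For the first identity, I would argue as follows. By definition $\omega_n=d^{-(k-1)n}(f^n)_*\omega_\FS = d^{-(k-1)n}(f^n)_*\omega_0$, where $\omega_0=\omega_\FS$ has mass $1$ and dynamical potential $u_0$. Since $(f^n)_*=(f_*)^n$ and each application of $f_*$ multiplies mass by $d^{k-1}$, the normalization makes $\omega_n$ a positive closed $(1,1)$-current of unit mass. Applying the relation $u_{f_*S}=f_*u_S$ iteratively $n$ times, and using linearity of $S\mapsto u_S$ (which follows from uniqueness of the dynamical potential), I get $u_{(f^n)_*\omega_0}=(f_*)^n u_0=(f^n)_*u_0$, hence $u_n=d^{-(k-1)n}(f^n)_*u_0$, as claimed. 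One should check that the normalization constant passes through $u_{(\cdot)}$ correctly: if $S$ has mass $m$ then $u_{cS}=c\,u_S$ since $cS=cmT+dd^c(cu_S)$ and $\langle\mu,cu_S\rangle=0$.

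For the second identity, the idea is to telescope using the smooth function $v$ defined just before the lemma, which satisfies $\ddc v=d^{-1}f^*\omega_\FS-\omega_\FS$ and $\langle\mu,v\rangle=0$. Set $w_N:=-\sum_{n=0}^{N-1}d^{-n}v\circ f^n$. Using $f^*(d^{-1}f^*\omega_\FS)=d^{-1}f^*(f^*\omega_\FS)$ and the identity $\ddc(v\circ f^n)=(f^n)^*\ddc v = d^{-n}(f^{n+1})^*\omega_\FS - (f^n)^*\omega_\FS$ (valid since $f$ is holomorphic), the sum telescopes to $\ddc w_N = \omega_\FS - d^{-N}(f^N)^*\omega_\FS$. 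As $N\to\infty$, $d^{-N}(f^N)^*\omega_\FS\to T$ (the defining property of the Green current), and $w_N\to w_\infty:=-\sum_{n=0}^\infty d^{-n}v\circ f^n$ uniformly because $v$ is bounded and the series converges geometrically; moreover $\langle\mu,w_\infty\rangle=0$ since $\langle\mu,v\circ f^n\rangle=\langle\mu,v\rangle=0$ by invariance of $\mu$. Passing to the limit gives $\ddc w_\infty=\omega_\FS-T$, i.e. $\omega_\FS=T+\ddc w_\infty$ with $\langle\mu,w_\infty\rangle=0$, so by uniqueness of the dynamical potential $u_0=w_\infty$, which is exactly the stated formula.

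The only real subtlety — and the step I would be most careful about — is the passage to the limit: one must justify that $\ddc$ commutes with the uniform limit (true, as $\ddc$ is continuous for the weak topology on currents), that $d^{-N}(f^N)^*\omega_\FS\to T$ weakly (standard, e.g. from \cite{dinh2010dynamics}), and that the normalization $\langle\mu,\cdot\rangle=0$ survives the limit (immediate from uniform convergence). Everything else is a routine telescoping computation, so I expect no genuine obstacle here; this lemma is a bookkeeping step feeding the estimates of the next subsection.
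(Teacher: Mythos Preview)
Your proposal is correct and follows essentially the same route as the paper: for the second identity the argument is identical (telescoping, uniform limit, normalization, uniqueness), and for the first identity you invoke the functorial relation $u_{f_*S}=f_*u_S$ already recorded in Section~\ref{ss:dyn-pot}, whereas the paper reverifies the defining conditions directly---but these are the same computation. One small slip: in your intermediate step you should have $\ddc(v\circ f^n)=d^{-1}(f^{n+1})^*\omega_\FS-(f^n)^*\omega_\FS$ (not $d^{-n}$); the extra $d^{-n}$ comes from the sum itself, and your stated conclusion $\ddc w_N=\omega_\FS-d^{-N}(f^N)^*\omega_\FS$ is correct regardless.
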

 \proof
 We prove the first identity. Denote by $u_n'$ the RHS of this identity, 
 which is a continuous function. By the
 definition of $u_n$ and the invariance of $T$, we have 
 \[\ddc (u_n-u_n') = (\omega_n-T) - d^{-(k-1)n}(f^n)_*(\omega_\FS-T)=  (\omega_n-T) - (\omega_n-T) =0.\]
 Therefore, $u_n-u_n'$ is pluriharmonic and hence constant on $\P^k$.
 Moreover, the invariance of $\mu$ implies that
 \[\langle \mu,u_n'\rangle =  d^{-(k-1)n}\langle (f^n)^*\mu, u_0\rangle = d^n\langle \mu, u_0\rangle =0.\]
 By the definition of $u_n$, we also have $\langle \mu, u_n\rangle =0$.
 We deduce that $u_n=u_n'$, 
 which implies the first identity in the lemma.
 
 It is clear that the sum in the RHS of the second identity in the lemma converges uniformly. 
 Therefore, this RHS is a continuous function that we denote by $u_0'$. 
 The invariance of $\mu$ also implies that $\langle \mu,u_0'\rangle=0$.
 A direct computation gives
 \[\ddc u_0'=\lim_{N\to\infty}  \Big( - \sum_{n=0}^{N-1} d^{-n} \ddc (v\circ f^n)\Big)=\lim_{N\to\infty}\omega_\FS-
 d^{-N} (f^N)^*\omega_\FS=\omega_\FS-T,\]
 where the last identity is a consequence of the definition of $T$. 
 Since $\ddc u_0$ is also equal to $\omega_\FS-T$, we obtain that $u_0-u_0'$
 is constant on $\P^k$. Finally, using that 
 \[\langle \mu,u_0\rangle = \langle \mu, u_0'\rangle =0,\]
 we conclude that $u_0=u_0'$. This ends the proof of the lemma.
 \endproof

In the sequel, we will need explicit bounds
on the oscillation $\Omega (u_n)$ of $u_n$.
These are provided in the next result.

\begin{lemma}\label{l:u-n-sup}
For every constant $A>1$, there exists a positive constant $c$ independent of  $n$ such that  
$\norm{u_n}_{\infty} \leq c A^n$ and  $\Omega (u_n) \leq c A^n$ for all $n\geq 0$.
\end{lemma}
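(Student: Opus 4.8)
The plan is to deduce the bound from the explicit formulas of Lemma~\ref{l:u0}, reducing it to a control of the push-forwards of the \emph{smooth} auxiliary function $v$, and then to invoke Assumption~\textbf{(A)} through the subexponential growth of the local degrees $\delta_n:=\max_{a\in\P^k}\deg(f^n,a)$. Since $\langle\mu,u_n\rangle=0$ we have $\|u_n\|_\infty\le\Omega(u_n)\le 2\|u_n\|_\infty$, so it is enough to bound $\|u_n\|_\infty$. Combining the two identities in Lemma~\ref{l:u0} and using that $(f^i)_*\big((f^m)^*v\big)=d^{km}(f^{i-m})_*v$ for $0\le m<i$ (because $(f^i)_*=(f^{i-m})_*\circ(f^m)_*$ and $(f^m)_*(f^m)^*=d^{km}\,\id$) together with $(f^i)_*\big((f^i)^*g\big)=d^{ki}g$, one gets the telescoping identity
\[
u_n=u_0-\sum_{i=1}^{n}d^{-(k-1)i}(f^i)_*v,
\]
where $v$ is the smooth function with $\ddc v=d^{-1}f^*\omega_\FS-\omega_\FS$ and $\langle\mu,v\rangle=0$. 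Hence
\[
\|u_n\|_\infty\le\|u_0\|_\infty+\sum_{i=1}^{n}\big\|d^{-(k-1)i}(f^i)_*v\big\|_\infty,
\qquad
\big\|d^{-(k-1)i}(f^i)_*v\big\|_\infty=d^{\,i}\sup_{y\in\P^k}\big|\langle d^{-ki}(f^i)^*\delta_y-\mu,\,v\rangle\big|,
\]
the last equality holding because $d^{-ki}(f^i)^*\delta_y$ is a probability measure and $\langle\mu,v\rangle=0$. So the matter is reduced to the quantitative equidistribution of the preimages $d^{-ki}(f^i)^*\delta_y$ towards $\mu$, tested against the \emph{fixed smooth} observable $v$, uniformly in $y$.

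I would then invoke such a quantitative equidistribution statement (see, e.g., \cite{dinh2010dynamics}), in which the only obstruction to fast equidistribution of the preimages of a point is the ramification of the iterate, so that it takes the form
\[
\sup_{y\in\P^k}\big|\langle d^{-ki}(f^i)^*\delta_y-\mu,\,v\rangle\big|\le C\,\delta_i^{\,a}\,d^{-i}
\]
for suitable constants $C=C(f,v)>0$ and $a>0$ independent of $i$. This gives $\big\|d^{-(k-1)i}(f^i)_*v\big\|_\infty\le C\delta_i^{\,a}$, hence $\|u_n\|_\infty\le\|u_0\|_\infty+C\sum_{i=1}^{n}\delta_i^{\,a}\le\|u_0\|_\infty+Cn\,\delta_n^{\,a}$. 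Assumption~\textbf{(A)} says $\tfrac1n\log\delta_n\to0$, so $n\,\delta_n^{\,a}=e^{o(n)}\le c(A)A^n$ for every fixed $A>1$, and we conclude $\|u_n\|_\infty\le cA^n$ and $\Omega(u_n)\le 2cA^n$. (As a side remark, $\sup_{\P^k}u_n$ stays bounded for free, because $u_n$ lies in the compact family of dynamical potentials of positive closed $(1,1)$-currents of mass $1$, which are uniformly bounded above, cf.\ the proof of Lemma~\ref{l:compare-sup}.)

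I expect the main obstacle to be exactly the quantitative equidistribution estimate above, with the correct polynomial dependence on the local degrees $\delta_i$ and uniform in the point $y$: the telescoping identity alone is tautological, since $d^{-(k-1)i}(f^i)_*v=u_{i-1}-u_i$, and it is Assumption~\textbf{(A)} — through $\delta_n=e^{o(n)}$ — that upgrades the trivial bound $\|u_n\|_\infty\lesssim d^{\,n}$ (which already gives the statement for every $A\ge d$) to a subexponential one. That a hypothesis of this kind is unavoidable is shown by $z\mapsto z^d$, which violates~\textbf{(A)} and for which $\|u_n\|_\infty\sim d^{\,n}$.
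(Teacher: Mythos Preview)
Your proof is correct and follows essentially the same approach as the paper: both reduce to bounding the sum $\sum_{i\le n}\big\|d^{-(k-1)i}(f^i)_*v\big\|_\infty$ and then invoke the quantitative equidistribution of preimages under Assumption~\textbf{(A)} (the paper states this as property~\textbf{(A1)}, namely $\|d^{-kj}(f^j)_*v\|_\infty\lesssim\delta^{-j}$ for every $0<\delta<d$, citing \cite{dinh2010equidistribution}, which is equivalent to the subexponential bound you want). Your telescoping identity $u_n=u_0-\sum_{i=1}^n d^{-(k-1)i}(f^i)_*v$ (i.e.\ $d^{-(k-1)i}(f^i)_*v=u_{i-1}-u_i$) reaches this sum a bit more directly than the paper's manipulation of the series for $u_0$, but the substance is identical.
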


\begin{proof}
Observe that the second assertion is deduced from 
the first one by
 replacing $c$ with $2c$. We prove now the first assertion. 
By Lemma \ref{l:u0} we have, for any given $z\in \P^k$,
\[
\begin{aligned}
u_n (z) & = d^{-(k-1)n} \pa{\pa{f^n}_* u_0} (z)
 = \big\langle \delta_z,  d^{-(k-1)n} \pa{f^n}_* u_0 \big\rangle\\
& = d^n \big\langle d^{-kn} \pa{f^n}^* \delta_z , u_0 \big\rangle
 = d^n \Big\langle d^{-kn} \pa{f^n}^* \delta_z ,  - \sum_{m=0}^\infty d^{-m} v\circ f^m \Big\rangle \\
 & = -d^n  \Big\langle d^{-kn} \pa{f^n}^* \delta_z ,  \sum_{m=0}^n d^{-m} v\circ f^m \Big\rangle 
 -  \Big\langle d^{-kn} \pa{f^n}^* \delta_z ,   \sum_{m=n+1}^\infty d^{-m+n} v\circ f^m \Big\rangle.
\end{aligned}
\]
The absolute value of the second term in the last line  is bounded by $\norm{v}_{\infty}$
because $d^{-kn} \pa{f^n}^* \delta_z$ is a probability measure.
Observe that
$(f^n)_* (v \circ f^m) = d^{km} (f^{n-m})_* v$ for all $n\geq m$.
Hence,
the absolute value of the first term is equal to 
\begin{equation}\label{e:sum-v}
\Big|\sum_{m=0}^n   d^{n-m} \Big\langle
 \delta_z ,  d^{-k(n-m)} (f^{n-m})_*v \Big\rangle\Big|
\leq \sum_{j=0}^n d^j \|d^{-kj}(f^j)_*v\|_\infty.
\end{equation}
Under the Assumption {\bf (A)},
 it is known that $\|d^{-kj}(f^j)_*v\|_\infty \lesssim \delta^{-j}$ for every $0<\delta<d$. Indeed, the Assumption
 {\bf (A)} 
 implies the property {\bf (A1)}
  below, see \cite[Cor. 1.2]{dinh2010equidistribution}.

\medskip\noindent
{\bf (A1)} Let $g\colon \P^k \to \R$ be $\Cc^2$ and such that $\langle\mu,g\rangle=0$.
For every constant $1<\delta<d$, there is a positive constant $c$
 independent of $g$ and $n$ such that
$$\|d^{-kn}(f^n)_*g\|_\infty \leq c\|g\|_{\Cc^2} \delta^{-n}.$$

\medskip

By choosing $\delta>d/A$, we can bound the RHS of \eqref{e:sum-v} 
by a constant times $A^n$. This ends the proof of the lemma.
\end{proof}

As an application of the previous estimates, we have the following lemma that can be used to study the regularity of
functions $g\colon \P^k\to \R$.

\begin{lemma}\label{l:ddc-dyn-log}
Let $g\colon \P^k \to \R $ be a continuous function
and $0<\beta<1$ a 
constant such that 
\begin{equation}\label{e:ddcg-omegan}
\abs{dd^c g} \leq \sum_{n=0}^\infty  \beta^n \omega_n.
\end{equation}
Then, for every $q>0$, there is a positive constant $c=c(q, \beta)$ independent of $g$ such that
$$\norm{g}_{\log^q} \leq c.$$
\end{lemma}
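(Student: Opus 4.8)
The strategy is to reduce the global $\log^q$ bound to local estimates via the comparison machinery of Section \ref{s:preliminary}, and then to sum the contributions of the currents $\omega_n$ with the weight $\beta^n$, using the explicit bounds $\Omega(u_n)\lesssim A^n$ from Lemma \ref{l:u-n-sup} for a suitably small $A$. Concretely, set $S:=\sum_{n\ge 0}\beta^n\omega_n$, a positive closed $(1,1)$-current; its mass is $\sum_n\beta^n=(1-\beta)^{-1}<\infty$ since each $\omega_n$ has unit mass. By the hypothesis $|dd^cg|\le S$ and Corollary \ref{c:compare-sup}, $\Omega(g)$ is already bounded by a constant depending only on $\beta$ (using $\Omega(u_S)\le\sum_n\beta^n\Omega(u_n)\lesssim\sum_n\beta^nA^n$, finite once $A<1/\beta$). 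So it remains to control the modulus of continuity $m_{\P^k}(g,r)$ as $r\to 0$ at the rate $(\log^\star r)^{-q}$.

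First I would work in affine charts: cover $\P^k$ by finitely many $\Phi_j(\B^k_{1/2})$ as in the proof of Corollary \ref{c:mod-cont-bis}, and write $S=dd^cv_j$ for a continuous p.s.h.\ potential $v_j$ on $\Phi_j(\B^k_4)$. Now pull everything back and apply Corollary \ref{c:mod-cont} to $g$ and $v_j$, with the free parameter $s\in(0,1]$ to be chosen depending on $r$: this yields, for $r$ below some $r_0$,
\[
m_{\P^k}(g,r)\;\lesssim\; \max_j m_{V_j}(v_j,c\,r^s)\;+\;\bigl(\Omega_{\P^k}(g)+\max_j\Omega_{V_j}(v_j)\bigr)\,r^{1-s}.
\]
The second term is harmless: $\Omega(g)$ and $\Omega_{V_j}(v_j)$ are bounded (the latter because $v_j$ is a local potential of $S$, whose dynamical potential has bounded oscillation as above), and choosing $s=s(r)$ so that $r^{1-s}$ decays like a power of $(\log^\star r)^{-1}$—for instance $1-s\asymp 1/\log^\star r$, i.e.\ $r^{1-s}\asymp e^{-1}$... more carefully, taking $s=1-\tfrac{q\log\log^\star r}{\log^\star r}$ or simply $1-s$ a small constant multiple of $(\log^\star r)^{-1}\log\log^\star r$—makes $r^{1-s}\lesssim(\log^\star r)^{-q}$.

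The heart of the matter is therefore the first term: estimating $m_{V_j}(v_j,\rho)$ for the potential $v_j$ of $S=\sum_n\beta^n\omega_n$ at scale $\rho=c\,r^s$. Writing $v_j$ locally as $\sum_n\beta^n(\text{local potential of }\omega_n)$, and recalling that the local potential of $\omega_n$ differs from $u_n+(\text{potential of }T)$ by a pluriharmonic function, one bounds $m(v_j,\rho)\le\sum_n\beta^n\bigl(m(u_n,\rho)+m(u_T\text{-part},\rho)\bigr)$. For each fixed $n$, $u_n$ is a continuous function with $\Omega(u_n)\lesssim A^n$, and $u_n=d^{-(k-1)n}(f^n)_*u_0$ with $u_0$ Hölder; this forces a modulus of continuity estimate of the shape $m(u_n,\rho)\lesssim A^n\min(1,(\text{const})^n\rho^{\alpha})$ — a bound that is good for small $n$ and at worst $\lesssim A^n$ for large $n$. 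Splitting the sum over $n$ at $N\asymp\log^\star(1/\rho)$: for $n\le N$ use the Hölder-type decay, for $n>N$ use $\sum_{n>N}\beta^nA^n\lesssim(\beta A)^N\lesssim(\log^\star\rho)^{-q}$ provided $\beta A<1$ and $N$ is chosen as a large enough multiple of $\log^\star(1/\rho)$ (this is where $A$ must be taken close enough to $1$, depending on $\beta$ and $q$). Combining the two ranges and unwinding $\rho=c\,r^s$ with $s$ close to $1$ gives $m_{\P^k}(g,r)\lesssim(\log^\star r)^{-q}$, hence $\|g\|_{\log^q}\le c(q,\beta)$.

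The main obstacle I anticipate is the bookkeeping in this last step: getting a modulus-of-continuity estimate for $u_n$ that interpolates correctly between its Hölder regularity (coming from $u_0$ via the distortion of $(f^n)_*$, which may blow up exponentially in $n$) and the crude bound $\Omega(u_n)\lesssim A^n$, and then choosing the cutoff $N=N(\rho)$ and the exponent $s=s(r)$ so that all error terms land below $(\log^\star r)^{-q}$ simultaneously. One must be careful that the constant $A>1$ in Lemma \ref{l:u-n-sup} can be taken as close to $1$ as needed (so that $\beta A<1$ with room to spare), which is exactly what that lemma provides.
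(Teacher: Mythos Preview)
Your overall architecture matches the paper's: reduce to estimating the modulus of continuity of the dynamical potential $u=\sum_n\beta^n u_n$ via Corollary~\ref{c:mod-cont}, split the sum at a cutoff $N$, bound the tail by $(A\beta)^N$ with $A<1/\beta$, and use H\"older-type estimates for the head. The gap is in the head estimate. You assert $m(u_n,\rho)\lesssim A^n\min(1,(\mathrm{const})^n\rho^{\alpha})$ with a \emph{fixed} exponent $\alpha$, but this is not what one obtains from $u_n=d^{-(k-1)n}(f^n)_*u_0$. The only control on preimages under $f^n$ is property {\bf (A2)} (a consequence of Assumption {\bf (A)}): for any $\kappa>1$ one can pair up points in $f^{-n}(x)$ and $f^{-n}(y)$ with distances $\lesssim\dist(x,y)^{1/\kappa^n}$. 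Since $u_0$ is $\gamma$-H\"older, this yields $m(u_n,r)\lesssim d^n\,r^{\gamma\kappa^{-n}}$---the H\"older \emph{exponent}, not just the constant, decays exponentially in $n$.

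This wrecks your choice $N\asymp\log^\star(1/\rho)\approx|\log\rho|$. Indeed, for such $N$ one has $\kappa^{-N}=\rho^{\text{const}}$ and hence $r^{\gamma\kappa^{-N}}\to 1$, so the term at $n=N$ behaves like $(\beta d)^N$; when $\beta d>1$ (the typical case in the applications, where $\beta\approx e^{\Omega(\phi)}/d$) this blows up polynomially in $1/\rho$. The remedy, which is what the paper does, is to take $N$ much smaller: $N=\frac{1}{2\log\kappa}\log|\log r|$, i.e.\ of order $\log\log(1/r)$. Then $\kappa^{-N}=|\log r|^{-1/2}$, so $r^{\gamma\kappa^{-N}}=e^{-\gamma\sqrt{|\log r|}}$ beats the merely polynomial-in-$|\log r|$ growth of $d^N$; and one chooses $\kappa$ close enough to $1$ (depending on $q$ and $A\beta$) so that $(A\beta)^N<|\log r|^{-q}$ still holds. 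Your introduction of a variable $s=s(r)$ in Corollary~\ref{c:mod-cont} is not needed---a fixed $s<1$ suffices, since once $m(u,r)\lesssim(\log^\star r)^{-q}$ is established the passage to $m(g,r)$ is immediate.
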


\begin{proof}
We 
bound the continuity modulus $m(g,r)$ of $g$ by means of
Corollary \ref{c:mod-cont}. We only need to consider $0<r\leq 1/2$. 
For this purpose, since $T$ has H\"older continuous local potentials, it suffices to
bound the continuity modulus of the dynamical potential of the RHS of \eqref{e:ddcg-omegan}.
This dynamical potential is equal to
\[
u:=\sum_{n=0}^{\infty} \beta^n u_n.
\]

Fix a constant $1 < A < 1/\beta$. By Lemma \ref{l:u-n-sup}, we have
$\norm{u_n}_{\infty}\lesssim A^n$. Hence, for every $N$, we have
\[
\begin{aligned}
m(u,r)&\lesssim
\sum_{n\leq N} \beta^n  m ( u_n, r) + \sum_{n>N} (A\beta)^n \lesssim \sum_{n\leq N} \beta^n  m ( u_n, r) + (A\beta)^N.
\end{aligned}
\]
Applying \cite[Cor.\,4.4]{dinh2010equidistribution} inductively to some iterate of $f$,  we see
that the Assumption {\bf (A)} implies:

\medskip\noindent
{\bf (A2)} for every constant $\kappa>1$, there 
are an integer $n_\kappa \geq 0$ and 
a constant $c_\kappa>0$
independent of $n$
 such that for all $x,y\in\P^k$ and $n\geq n_\kappa$ 
 we can write
$f^{-n}(x)=\{x_1,\ldots, x_{d^{kn}}\}$ and $f^{-n}(y)=\{y_1,\ldots, y_{d^{kn}}\}$
(counting multiplicity) with the property 
that
$$\dist(x_j,y_j)\leq c_\kappa \dist(x,y)^{1/\kappa^n} \quad \text{for } \ j=1,\ldots, d^{kn}.$$

\medskip

 By definition, the function $u_0$ is $\gamma$-H\"older continuous for
 some H\"older exponent $\gamma$ because $T$ has H\"older continuous local potentials. 
The above property {\bf (A2)}
 implies that 
$(f^n)_* u_0$ is $\gamma \kappa^{-n}$-H\"older continuous
for all $n\geq n_\kappa$. 
More precisely, we have
\[
m(d^{-kn}(f^n)_* u_0,r)\leq c' r^{\gamma \kappa^{-n}} 
\qquad
 \text{and hence} \qquad
m(u_n, r)\leq c'd^n r^{\gamma \kappa^{-n}}
\]
for some positive constant $c'$
 independent of $n\geq n_\kappa$ 
and $r$.
Observe also that
for $0 \leq n \leq n_\kappa$ all the $u_n$ are
$\alpha_\kappa$-H\"older
continuous for some $\alpha_\kappa >0$. Indeed, as the multiplicity
of $f^n$ at a point is at most $d^{kn}$, we have (see again \cite[Cor.\,4.4]{dinh2010equidistribution}):

\medskip\noindent
{\bf (A2')}
there is a constant $c_0>0$ such that for every $n\geq 0$,
for all $x,y\in\P^k$,
 we can write
$f^{-n}(x)=\{x_1,\ldots, x_{d^{kn}}\}$ and $f^{-n}(y)=\{y_1,\ldots, y_{d^{kn}}\}$
(counting multiplicity) with the property 
that
$$\dist(x_j,y_j)\leq c_0 \dist(x,y)^{1/d^{kn}} \quad \text{for } \ j=1,\ldots, d^{kn}.$$

Therefore, we have
\begin{equation}\label{e:cont-mod-u}
m(u,r)
\lesssim r^{\alpha_\kappa} 
+ \sum_{n_\kappa \leq n\leq N} \pa{\beta d}^n r^{\gamma \kappa^{-n}} + (A\beta)^N.
\end{equation}
Choose $\kappa$ close enough to 1 so that $2q\log\kappa < |\log(A\beta)|$ and take
$$N={1\over 2\log\kappa}  \log \abs{\log r}$$
(recall that we only need to consider $r\leq 1/2$). Then, the last term in \eqref{e:cont-mod-u} satisfies 
$$(A\beta)^N= e^{N\log(A\beta)} < e^{-2Nq\log\kappa}=|\log r|^{-q}.$$
It remains to prove that the 
sum in \eqref{e:cont-mod-u} satisfies a similar estimate. We have
\[ \sum_{n\leq N} \pa{\beta d}^n r^{\gamma \kappa^{-n}}
 \leq \sum_{n\leq N} \beta^n  d^N r^{\gamma \kappa^{-N}}
 \lesssim  d^N r^{\gamma \kappa^{-N}}
=e^{{\log d\over  2\log\kappa}  \log \abs{\log r}} e^{\gamma (\log r) e^{-{1\over 2}\log|\log r|}}
={|\log r|^{\log d\over  2\log\kappa}   \over e^{\gamma \sqrt{|\log r|}}} \cdot\]

The last expression is smaller than a constant times $|\log r|^{-q}$
because $e^t \gg t^M$ when $t\to\infty$ for every $M\geq 0$. This,
together with the above estimates, gives
$m(u,r)\lesssim |\log r|^{-q}$ and ends the proof of the lemma.
\end{proof}

\section{Existence of the scaling ratio and equilibrium state}\label{s:lambda-rho}

 In this section
we prove the existence of a
good scaling ratio $\lambda$,
see Theorem \ref{t:main-lam} below.

\subsection{Main statement and first step of the proof}\label{ss:lambda-rho-first-step}

Recall that the Perron-Frobenius
operator $\Ll$ is defined 
as
in \eqref{e:L}. A direct computation gives 
 \[ \Ll^n (g) (y)
 = \sum_{f^n (x) =y } 
e^{ \phi (x) + \phi (f(x) ) + \cdots + \phi (f^{n-1} (x)) }  g(x).
 \]

\begin{theorem}\label{t:main-lam}
Let $f$ and $\phi$ be as in Theorem \ref{t:main}.
There exist a number $\lam>0$ and a continuous function
$\rho>0$ on $\P^k$ such that
for every continuous function $g\colon \P^k \to \R$
the sequence
$\lam^{-n}\Ll^n(g)$ is equicontinuous and converges uniformly to $c_g\rho$, where $c_g$
is a constant depending linearly on $g$. Moreover, 
if $g$ is strictly positive, then $c_g$ is strictly positive and the sequence $\Ll^n(g)^{1/n}$
converges uniformly to $\lambda$ as $n$ tends to infinity.
\end{theorem}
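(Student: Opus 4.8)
The plan is to follow the two-step strategy outlined in the introduction: first show uniform boundedness of a suitably normalized $\Ll^n g$ and use it to define $\lambda$, then upgrade to equicontinuity, and finally pass to the limit. I will first reduce to the case where $g$ is of class $\Cc^2$, using Lemma \ref{l:g-Cs-logp}: writing $g = g_n^{(1)} + g_n^{(2)}$ with $\|g_n^{(1)}\|_{\Cc^2} \lesssim e^{n^{2/p}/2}$ and $\|g_n^{(2)}\|_\infty \lesssim n^{-2}$, the contribution of $g_n^{(2)}$ to $\lambda^{-n}\Ll^n g$ will be controlled once the scaling ratio is shown to be $\lambda = \lim \langle \omega_\FS^k, \Ll^n \1\rangle^{1/n}$ and the growth $\langle\omega_\FS^k,\Ll^n\1\rangle \asymp \lambda^n$ is available (so the $e^{n^{2/p}/2}$ blow-up on $g_n^{(1)}$ is harmless as it is sub-exponential). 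So the heart of the matter is the $\Cc^2$ case.

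For $g \in \Cc^2$, set $c_n := \|g\|_{\Cc^2}\langle \omega_\FS^k, \Ll^n\1\rangle$. The key estimate \eqref{e:intro-ddc} bounds $|dd^c(\Ll^n g / c_n)|$ by the fixed positive closed $(1,1)$-current $R := \sum_{j\geq 0}(e^{\Omega(\phi)}/d)^j \omega_j$ (recall $\omega_j$ has unit mass, so $R$ has finite mass $(1 - e^{\Omega(\phi)}/d)^{-1}$ thanks to $\Omega(\phi) < \log d$). I would prove \eqref{e:intro-ddc} by induction: $dd^c \Ll g = dd^c \Ll_\phi g$ decomposes using $\Ll_\phi g = \Ll_0(e^\phi g)$ and the chain/Leibniz rule for $dd^c$ on inverse branches, the point being that $\Ll_0 = d^{-?}$-scaled $f_*$ pushes a $(1,1)$-current forward with the stated mass scaling; the $e^{\Omega(\phi)}$ factor absorbs the oscillation of $\phi$ between the maximum of $\Ll^n\1$ and the relevant local values. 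Granting \eqref{e:intro-ddc}: by Corollary \ref{c:compare-sup} the oscillation $\Omega(\Ll^n g / c_n)$ is bounded by $A\|R\| + 3\Omega(u_R)$, and $u_R = \sum_j (e^{\Omega(\phi)}/d)^j u_j$ has finite oscillation since $\Omega(u_j) \leq cA^j$ for any $A > 1$ (Lemma \ref{l:u-n-sup}) and we may pick $1 < A < d/e^{\Omega(\phi)}$. This gives a uniform bound on $\max \Ll^n g / \min \Ll^n g$ for positive $g$, hence the existence of $\lambda := \lim \langle\omega_\FS^k,\Ll^n\1\rangle^{1/n}$ with $\langle\omega_\FS^k,\Ll^n\1\rangle \asymp \lambda^n$ by submultiplicativity-type arguments on $\Ll^n\1$, and boundedness of $\lambda^{-n}\Ll^n g$ for all continuous $g$.

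For equicontinuity, I would apply Corollary \ref{c:mod-cont-bis} (or directly Corollary \ref{c:mod-cont}): since $|dd^c(\Ll^n g/c_n)| \leq R$ with $R$ fixed and having continuous local potentials (by Lemma \ref{l:ddc-dyn-log}, its dynamical potential is $\log^q$-continuous for all $q$, in particular continuous), the family $\{\Ll^n g / c_n\}_n$ lies in the equicontinuous set $\Fc(R)$. Combined with uniform boundedness, Arzelà–Ascoli gives convergence along subsequences. To identify the limit and get full convergence, I would invoke the standard argument (deferred in the paper to Sections \ref{s:equidistributions-L} and \ref{s:proof-t:main-further}): the normalized operators $\lambda^{-n}\Ll^n$ form a bounded equicontinuous family, any limit point is a rank-one projection $g \mapsto c_g\rho$ onto the line of a fixed eigenfunction $\rho > 0$ of $\Ll$ with eigenvalue $\lambda$ (positivity of $\rho$ from the Harnack-type oscillation bound: $\min \rho / \max \rho$ is bounded below), and uniqueness of such $\rho$ forces the whole sequence to converge; $c_g$ depends linearly on $g$ by linearity of $\Ll$. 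Finally, for strictly positive $g$, $c_g > 0$ follows from $c_g = \langle m_\phi, g\rangle$ with $m_\phi$ a probability measure, and $\Ll^n(g)^{1/n} \to \lambda$ follows from $c_g\rho\, \lambda^n(1+o(1)) \leq \Ll^n g \leq c_g\rho\,\lambda^n(1+o(1))$ with $\rho$ bounded away from $0$ and $\infty$, taking $n$-th roots.

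I expect the \textbf{main obstacle} to be the rigorous proof of the core estimate \eqref{e:intro-ddc}: one must carefully track how $dd^c$ interacts with the weighted sum over inverse branches defining $\Ll^n$, control the cross terms produced by differentiating the product $e^{\phi + \phi\circ f + \cdots}\, g$ along the branches, and see that all these terms are dominated — after the global normalization by $c_n = \|g\|_{\Cc^2}\langle\omega_\FS^k,\Ll^n\1\rangle$ rather than by branchwise distortion constants — by the geometric series of push-forwards $\sum_j (e^{\Omega(\phi)}/d)^j \omega_j$. The secondary difficulty is the reduction from continuous (merely $\log^q$-continuous) $g$ to $\Cc^2$ $g$: one needs the $\Cc^2$-norm blow-up in Lemma \ref{l:g-Cs-logp} to be beaten by the already-established exponential growth $\langle\omega_\FS^k,\Ll^n\1\rangle \asymp \lambda^n$, which is why $q > 2$ (equivalently the $n^{2/p}$ exponent with the roles set so the error is $o(\lambda^n)$) enters, and this interplay has to be organized so that no circularity with the definition of $\lambda$ arises.
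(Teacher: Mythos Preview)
Your overall architecture is right, but there is a genuine gap at the core step. You propose to reduce to $g\in\Cc^2$ and then prove the schematic estimate \eqref{e:intro-ddc} by induction via Leibniz along inverse branches. The problem is that the weight $\phi$ is only $\log^q$-continuous, not $\Cc^2$; so even for $g\in\Cc^2$ the function $\Ll^n g(y)=\sum_{f^n(x)=y}e^{\phi(x)+\cdots+\phi(f^{n-1}(x))}g(x)$ has no reason to satisfy any bound of the form $|dd^c(\Ll^n g)|\leq S_0$ with $S_0$ a positive closed current having continuous potentials, because differentiating forces you to apply $\partial,\dbar$ to $\phi$. Approximating $g$ does nothing for this; it is $\phi$ that must be regularized. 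In the paper this is done by writing $\phi=\phi_m+\psi_m$ (Lemma \ref{l:g-Cs-logp} applied to $\phi$, not to $g$) with $\|\phi_m\|_{\Cc^2}\lesssim e^{m^{2/q}/2}$ and $\|\psi_m\|_\infty\lesssim m^{-2}$, replacing $\Ll^n$ by a modified operator $\hat\Ll_n$ built from the $\phi_m$'s (one per position in the orbit), and showing $\hat\Ll_n\1$ and $\Ll^n\1$ differ by a bounded multiplicative factor. The condition $q>2$ enters precisely here: it makes the $\Cc^2$-blowup of $\phi_m$ sub-exponential, so the resulting $dd^c$ bound involves a sub-exponential factor $\eta(m)$ that is beaten by the geometric decay $(e^{\Omega(\phi)}/d)^m$. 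The clean estimate \eqref{e:intro-ddc} is only schematic; the actual Proposition \ref{p:laplace-1-n} carries these $\eta(m)$ factors.

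Two further points where your sketch is thinner than the actual argument. First, the $dd^c$ estimate is not obtained by induction on $n$ but via Gromov's graph $\Gamma_n\subset(\P^k)^{n+1}$: one lifts $\hat\1_n$ to a function $\h$ on the product, computes $i\ddbar\h$ there (Lemma \ref{l:ddch}, where the cross terms $i\partial\phi_{n+J-m}\wedge\dbar\phi_{n+J-m'}$ are controlled by Cauchy--Schwarz with weights $(m-m'+1)^{\pm 2}$), and pushes down by the last projection. An inductive approach would have to re-bound $|dd^c\Ll^{n-1}g|$ in terms of a current with growing mass, and it is not clear the series closes. Second, the boundedness of $\theta_n=\rho_n^+/\rho_n^-$ is not a direct corollary of the $dd^c$ bound: the paper proves $\hat\theta_n\leq d^{kN}$ by an induction on $n$ (Lemma \ref{l:theta-bound}), in which the estimate for $\Omega(\hat\1_n)$ from Proposition \ref{p:laplace-1-n} involves $\hat\rho_{n-m}^+$ on the right-hand side, and one uses the inductive hypothesis $\hat\rho_{n-m}^+\leq d^{kN}\hat\rho_{n-m}^-$ together with $\hat\rho_n^-\gtrsim d^{km}e^{m\min\phi}\hat\rho_{n-m}^-$ to close the loop. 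Only after $\theta_n$ is bounded does one define $\lambda=\lim(\rho_n^+)^{1/n}$, get $\lambda^{-n}\rho_n^\pm$ bounded above and below, and then handle general continuous $g$ by convolution approximation (Lemma \ref{l:Ln-g-equi}); your ordering, which defines $\lambda$ via $\langle\omega_\FS^k,\Ll^n\1\rangle$ and then bounds oscillations, risks circularity.
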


We will first study the case where $g$ is equal to $\1$. The
 general case will be deduced from this particular case. Define
$\1_n := \Ll^n (\1)$. Denote by $\rho_n^+$ and $\rho_n^-$
the maximum and the minimum of $\1_n$, respectively.
Consider also the ratio $\theta_n := \rho^+_n / \rho^-_n$ and the function 
$\1_n^*:=(\rho_n^-)^{-1}\1_n$. Observe that the last function satisfies $\min \1_n^*=1$. 
The following result will be crucial for us.

\begin{proposition}\label{p:theta-n-1-n}
Under the hypotheses of Theorem \ref{t:main-lam}, the sequence
$\set{\theta_n}$ is bounded and the sequence of functions
$\set{\1_n^*}$ is uniformly bounded and equicontinuous.
\end{proposition}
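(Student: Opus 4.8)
The plan is to apply the comparison machinery of Section \ref{s:preliminary} to the functions $\1_n = \Ll^n(\1)$, using the key current estimate \eqref{e:intro-ddc}. Concretely, I would first establish a bound of the form
\[
\Big| dd^c \frac{\1_n}{\langle \omega_\FS^k, \1_n\rangle} \Big| \lesssim \sum_{j=0}^\infty \Big(\frac{e^{\Omega(\phi)}}{d}\Big)^j \omega_j,
\]
where $\omega_j = d^{-(k-1)j}(f^j)_*\omega_\FS$ as in Section \ref{s:norm}. This follows by expanding $\Ll^n$ as a weighted sum over inverse branches, differentiating, and crucially using that the weights $\phi$ on successive iterates accumulate an oscillation controlled by $e^{\Omega(\phi)}$ per step while the push-forward of a unit-mass current has mass $d^{(k-1)j}$ after normalization; here the $\Cc^2$-approximation scheme of Lemma \ref{l:g-Cs-logp} is needed to make sense of $dd^c$ of a merely continuous weight (or one works with $\phi$ of class $\Cc^2$ first and approximates). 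The assumption $\Omega(\phi) < \log d$ guarantees $\beta := e^{\Omega(\phi)}/d < 1$, so the RHS is exactly a current of the form $\sum_{j=0}^\infty \beta^j \omega_j$ appearing in Lemma \ref{l:ddc-dyn-log}.

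Once this estimate is in hand, the boundedness of $\theta_n$ follows from Corollary \ref{c:compare-sup}: the normalized function $c_n^{-1}\1_n$ (with $c_n = \langle \omega_\FS^k, \1_n\rangle$ up to the harmless $\|g\|_{\Cc^2}$ factor, which is $1$ here) has $|dd^c(c_n^{-1}\1_n)| \leq S_0$ where $S_0 := \sum_j \beta^j \omega_j$ is a fixed positive closed $(1,1)$-current of finite mass $\sum_j \beta^j < \infty$, whose dynamical potential $u = \sum_j \beta^j u_j$ has finite oscillation by Lemma \ref{l:u-n-sup} (choosing $1 < A < 1/\beta$). Hence $\Omega(c_n^{-1}\1_n) \leq A\|S_0\| + 3\Omega(u_{S_0})$ is bounded independently of $n$. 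Since $c_n^{-1}\1_n$ has integral $1$ against $\omega_\FS^k$ (a probability measure), its maximum is $\geq 1$ and, combined with the oscillation bound, its minimum is bounded below by a positive constant; therefore $\theta_n = \rho_n^+/\rho_n^- = \max(c_n^{-1}\1_n)/\min(c_n^{-1}\1_n)$ is bounded. Equicontinuity of the family $\{\1_n^*\}$ then follows from Corollary \ref{c:mod-cont-bis} applied to the fixed current $S_0$: since $\1_n^* = (\rho_n^-)^{-1}\1_n$ and $(\rho_n^-)^{-1} = \theta_n \cdot (\rho_n^+)^{-1}$ differs from $c_n^{-1}$ by a bounded factor, $|dd^c \1_n^*| \lesssim S_0$ with a uniform implicit constant, so $\1_n^* \in C \cdot \Fc(S_0)$ for a fixed constant $C$, and $\Fc(S_0)$ is equicontinuous; uniform boundedness is immediate from $\min \1_n^* = 1$ together with the bounded oscillation.

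The main obstacle I expect is rigorously establishing the current estimate for $dd^c \1_n$ when $\phi$ is only $\log^q$-continuous rather than smooth: one cannot literally differentiate, and the argument must be carried out via the decomposition $\phi = \phi_\eps^{(1)} + \phi_\eps^{(2)}$ of Lemma \ref{l:g-Cs-logp}, tracking how the $\Cc^2$ part (whose norm blows up like $e^{(1/\eps)^{1/q}}$) interacts with the $n$-fold Birkhoff-type sum $\phi(x) + \phi(f(x)) + \cdots + \phi(f^{n-1}(x))$ and its $dd^c$. The resolution should be that when we push forward by $f^j$ and normalize, the contribution of the smooth part at step $m$ is dampened by a factor $d^{-m}$ (or better, by property \textbf{(A1)}), so that choosing $\eps$ depending on $n$ appropriately — exactly the balance struck in the second assertion of Lemma \ref{l:g-Cs-logp} with $\eps \sim n^{-2}$ — keeps everything uniformly controlled; the error from $\phi_\eps^{(2)}$, being $O(\eps)$ in sup-norm, contributes a multiplicative factor $e^{O(n\eps)}$ to $\Ll^n$ which tends to $1$. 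A secondary technical point is verifying the precise normalization constant $c_n$ and that $\langle \omega_\FS^k, \1_n\rangle$ is comparable to both $\rho_n^+$ and $\rho_n^-$, which again reduces to the oscillation bound. Everything else is a direct invocation of the corollaries already proved.
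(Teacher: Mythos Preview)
Your plan identifies the right pieces from Section~\ref{s:preliminary}, but there is a circularity in the first step. The clean estimate
\[
\Big| dd^c \frac{\1_n}{c_n} \Big| \lesssim \sum_{j\geq 0} \beta^j \omega_j, \qquad \beta = e^{\Omega(\phi)}/d,
\]
which you lift from \eqref{e:intro-ddc}, cannot be established directly; it is the \emph{outcome} of the argument, not its starting point. When you actually differentiate the weighted sum over inverse branches (via the Gromov graph $\Gamma_n$, as in Proposition~\ref{p:laplace-1-n}), the natural bound for the coefficient of $\omega_m$ involves $\hat\rho^+_{n-m}$, not $c_n$: schematically,
\[
|dd^c \hat\1_n| \lesssim \sum_m \eta(m)\, e^{m\max\phi}\, \hat\rho^+_{n-m}\, d^{(k-1)m}\,\omega_m.
\]
Dividing by $\hat\rho^-_n$ and using $\hat\rho^-_n \gtrsim d^{km} e^{m\min\phi}\hat\rho^-_{n-m}$ leaves a factor $\hat\theta_{n-m}$ in front of $\beta^m\omega_m$. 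So the estimate you want already presupposes that $\theta_m$ is bounded for $m<n$. The paper closes this loop by an induction on $n$ (Lemma~\ref{l:theta-bound}): one introduces an auxiliary cutoff $N$, checks $\hat\theta_n\le d^{kN}$ directly for $N<n\le 2N$, and then propagates the bound forward, the point being that $N$ large makes the tail $\sum_{m>N}\eta(m)(e^{\Omega(\phi)}/\delta)^m$ small enough to absorb all implicit constants. This inductive bootstrap is the missing idea.

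A related gap: even granting $\Omega(c_n^{-1}\1_n)\le C$ for some constant $C$, the deduction that $\theta_n$ is bounded fails unless $C<1$. From $\rho_n^+ - \rho_n^- \le C c_n \le C\rho_n^+$ you only get $\theta_n \le (1-C)^{-1}$ when $C<1$; the average condition $\langle\omega_\FS^k,c_n^{-1}\1_n\rangle=1$ does not give a positive lower bound on $\min(c_n^{-1}\1_n)$ otherwise. The paper's induction is precisely what forces the effective constant below~$1$. Finally, on the approximation issue you flag: the paper does not use a single $\phi_\eps$ but a \emph{different} smooth approximation $\phi_m$ at each level of the Birkhoff sum (see \eqref{e:l-hat}), with $\|\phi_m\|_{\Cc^2}$ growing sub-exponentially in the depth parameter; this is what produces the sub-exponential $\eta$ in Lemma~\ref{l:ddch} and makes the Gromov-graph computation go through. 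Once $\theta_n$ is bounded, your route to equicontinuity via Corollary~\ref{c:mod-cont-bis} (equivalently Lemma~\ref{l:ddc-dyn-log}) is essentially the paper's Lemma~\ref{l:eq-1-n}.
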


The proof of this result will 
 be given in Section \ref{ss:proof-theta-n-1-n} and uses
 the technical tools that were
 presented in Section \ref{s:preliminary}. 
 Before giving it, we 
  need to first introduce some auxiliary objects.

By Lemma \ref{l:g-Cs-logp} applied to $\phi$ instead of $g$, we can find functions $\phi_n$ and $\psi_n$ such that 
\begin{equation} \label{e:phi-n}
\phi = \phi_n + \psi_n,\qquad \norm{\phi_n}_{\Cc^2} \leq  c \norm{\phi}_{\infty}  e^{{1\over 2}n^{2/q}},
 \qquad \text{and} \qquad  \norm{\psi_n}_{\infty} \leq c  \norm{\phi}_{\log^q} n^{-2}.
\end{equation}
Consider two integers $J\geq 0$ and  $N\geq 0$, 
whose values will be specialised later. Define for $n\geq N+1$
 \begin{equation}\label{e:l-hat}
  {\hat \Ll_n} (g)(x)
 := \sum_{f^n (x) =y } 
e^{ \phi_{n+J} (x) + \phi_{n+J-1} (f(x) ) + \cdots + \phi_{J+N+1} (f^{n-N-1} (x)) }  g(x).
 \end{equation}
This operator will be used to approximate
 $\Ll^n$. The gain here is the fact that the involved functions $\phi_m$ have controlled $\Cc^2$ norms. 
As above, we define
$$\hat \1_n:=\hat\Ll_n \1,\quad \hat \rho^+_n:=\max \hat\1_n, \quad \hat \rho^-_n
:=\min \hat\1_n, \quad \hat\theta_n:=\hat\rho^+_n/\hat\rho^-_n, \quad \mbox{ and }
\quad \hat\1_n^*:=(\hat\rho^-_n)^{-1} \hat\1_n.$$
The following lemma allows us to reduce our problem to the study of the functions $\hat \1_n$.

\begin{lemma}\label{l:hat-theta-n}
There exists a positive constant $c=c(N)$
such that, for all $n>N\geq 0$ and $J$,
\[
c^{-1} \leq 
\rho^+_{n}/{\hat \rho^+_n}\leq c
\quad \mbox{ and }
\quad 
c^{-1} \leq \rho^-_{n}/{\hat \rho^-_n}\leq c.\]
In particular,
the sequence $\big\{\hat \theta_n \big\}$ is bounded if and only if the sequence
 $\set{\theta_n}$ is  bounded.
\end{lemma}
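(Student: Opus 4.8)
The plan is to compare the operator $\Ll^n$ with the approximating operator $\hat\Ll_n$ pointwise, by tracking the error in the exponent along each orbit segment of length $n$. Writing $\Ll^n\1(x)$ and $\hat\Ll_n\1(x)$ as sums over the $d^{kn}$ preimages $y\in f^{-n}(x)$ (counted with multiplicity), the only difference between the two is that in $\Ll^n$ each term carries the weight $\exp\big(\sum_{j=0}^{n-1}\phi(f^j(y))\big)$ while in $\hat\Ll_n$ it carries $\exp\big(\sum_{j=0}^{n-N-1}\phi_{n+J-j}(f^j(y))\big)$; in particular the last $N$ terms $\phi(f^{n-N}(y)),\dots,\phi(f^{n-1}(y))$ are simply dropped, and the remaining ones are replaced by the $\Cc^2$-pieces $\phi_m$. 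First I would bound the contribution of the dropped terms: these are at most $N$ values of $\phi$, so their total absolute value is at most $N\|\phi\|_\infty$, which is a constant depending only on $N$ (and $\phi$, which is fixed). This produces a multiplicative factor in $[e^{-N\|\phi\|_\infty}, e^{N\|\phi\|_\infty}]$ between the two sums, uniformly in $x$, $n$, and $J$.

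Next I would control the error coming from replacing $\phi$ by $\phi_m$ in the remaining terms. By \eqref{e:phi-n} we have $\|\psi_m\|_\infty = \|\phi - \phi_m\|_\infty \le c\|\phi\|_{\log^q} m^{-2}$, and in $\hat\Ll_n$ the index $m$ ranges over $J+N+1,\dots,n+J$, i.e.\ over a set of integers all of which are at least $N+1$. Hence the total exponent error along any orbit segment is at most
\[
\sum_{j=0}^{n-N-1}\big|\phi(f^j(y)) - \phi_{n+J-j}(f^j(y))\big|
\le \sum_{m=N+1}^{\infty} \|\psi_m\|_\infty
\le c\|\phi\|_{\log^q}\sum_{m=N+1}^\infty m^{-2}
\le c\|\phi\|_{\log^q},
\]
which is finite and bounded by a constant independent of $n$, $J$, and $x$ (indeed it even goes to $0$ as $N\to\infty$, though we do not need that here). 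This gives another uniform multiplicative factor between $\Ll^n\1(x)$ and $\hat\Ll_n\1(x)$. Combining the two estimates yields a constant $c = c(N)$ (absorbing the fixed quantities $\|\phi\|_\infty$ and $\|\phi\|_{\log^q}$) such that $c^{-1}\hat\1_n(x) \le \1_n(x) \le c\,\hat\1_n(x)$ for every $x\in\P^k$ and every $n>N$, $J\ge 0$. Taking the maximum and the minimum over $x$ gives $c^{-1}\le \rho^\pm_n/\hat\rho^\pm_n \le c$, and the last sentence of the lemma follows immediately since $\theta_n = \rho^+_n/\rho^-_n$ and $\hat\theta_n = \hat\rho^+_n/\hat\rho^-_n$ then differ by a factor in $[c^{-2},c^2]$.

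I do not expect a serious obstacle here: the statement is essentially a bookkeeping comparison of two weighted sums over the same index set $f^{-n}(x)$, and the only inputs are the elementary bounds \eqref{e:phi-n} and the convergence of $\sum m^{-2}$. The one point requiring a little care is to make sure the constant genuinely depends only on $N$ and not on $J$ or $n$: this is exactly why it matters that the indices $m$ appearing in $\hat\Ll_n$ start at $J+N+1 \ge N+1$, so that the tail $\sum_{m\ge N+1}\|\psi_m\|_\infty$ bounds the error regardless of how large $J$ is. One should also note that multiplicities are handled automatically because both $\Ll^n\1$ and $\hat\Ll_n\1$ sum over preimages with the same multiplicities, so the comparison is term by term.
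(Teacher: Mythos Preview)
Your proposal is correct and follows essentially the same approach as the paper: both arguments compare $\1_n$ and $\hat\1_n$ term by term over $f^{-n}(x)$, splitting the exponent discrepancy into the $N$ dropped tail terms (bounded by $N\|\phi\|_\infty$) and the approximation errors $\|\psi_m\|_\infty$ summed over $m\ge J+N+1\ge N+1$, and then use the summability of $\sum m^{-2}$ from \eqref{e:phi-n}. Your version is in fact slightly cleaner in that you state the pointwise comparison $c^{-1}\hat\1_n\le \1_n\le c\,\hat\1_n$ explicitly before taking $\max$ and $\min$, whereas the paper works directly with $\rho_n^\pm$; the content is the same.
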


\begin{proof}
We have 
\[\begin{aligned}
\rho^+_{n}& = \max\1_{n} = \max_{y} \sum_{f^{n}(x)=y}e^{ \phi (x) + \phi (f(x) ) + \cdots + \phi (f^{n-1} (x)) } \\
& =
\max_{y}
\sum_{f^{n}(x) =y }
e^{ \phi_{n+J} (x) 
 + \cdots
 + \phi_{J+N+1} (f^{n-N-1} (x)) } 
 \cdot
 e^{\psi_{n+J} (x)
  + \cdots
   + \psi_{J+N+1} (f^{n-N-1} (x)) }\\
  &\qquad \qquad \qquad  \qquad \cdot
   e^{\phi(f^{n-N} x)) + \cdots + \phi(f^{n-1}(x))}
\end{aligned}
\]
and similarly for $\rho^-_{n}$.
So, both
$\rho^+_{n}/{\hat \rho^+_n}$ and $\rho^-_{n}/{\hat \rho^-_n}$ are bounded from above and below
by $e^{N \max \phi} C_{n,N,J}$ and $e^{N\min \phi}/C_{n,N,J}$ 
respectively, where
\[
C_{n,N,J} :=
 e^{\norm{\psi_{n+J}}_\infty + \norm{\psi_{n+J-1}}_\infty + \, \cdots \, +  \norm{\psi_{J+N+1}}_\infty }.
\]
It follows from the estimate on $\psi_n$ given above that 
$C_{n,N,J}$
 is bounded from above 
  by a positive constant
  which does not depend on $n,J$ and $N$. 
  Therefore, 
  both $\rho^+_{n}/{\hat \rho^+_n}$ and $\rho^-_{n}/{\hat \rho^-_n}$ 
  are bounded from below and above 
 by positive constants as in the statement. The lemma follows.
\end{proof}

\subsection{An estimate for  $dd^c \hat{\1}_n$}\label{ss:estimate-ddc1n}

Proposition \ref{p:theta-n-1-n} will be obtained using the following crucial estimate for 
$dd^c \hat \1_n$.
We will see here the role of the estimate of 
the $\Cc^2$ norm of $\phi_n$.
Recall that $q>2$, see Theorem \ref{t:main}.
We also refer to Section \ref{ss:bound-pot-push} for notation.

\begin{proposition} \label{p:laplace-1-n}
There exists a sub-exponential function $\eta(t)=ct^3e^{(t+J)^{2/q}}$ 
with a positive constant $c=c(\norm{\phi}_{\log^q}, \norm{\phi}_\infty)$
 independent of $n,J$ and $N$  such that
for all $n>N \geq 0$  we have
$$\abs{dd^c \hat \1_n} \leq  \sum_{m=N+1}^{n-N}
\eta(m) e^{m\max \phi} \hat \rho^+_{n-m} d^{(k-1)m} \omega_{m} + 
\sum_{m=n_0}^{n} d^{kN} \eta(m) e^{(n-N)\max \phi}d^{(k-1)m} \omega_{m},
$$
where $n_0 :=\max(n-N+1,N +1)$.
\end{proposition}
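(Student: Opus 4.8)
The idea is to compute $dd^c \hat\1_n$ directly from the definition \eqref{e:l-hat} by expanding the product of exponentials and the implicit pull-backs, and then to estimate each resulting term using: (i) the $\Cc^2$ bounds on the $\phi_m$'s from \eqref{e:phi-n}; (ii) the elementary fact that $dd^c$ of a product decomposes into first- and second-derivative terms; and (iii) the control $\hat\1_{n-m}\leq \hat\rho^+_{n-m}\cdot\1$ on the remaining factor. I will first write, for $n>N$,
\[
\hat\1_n = \sum_{m=N+1}^{n} (f^{m})_*\!\Big( e^{\Phi_m}\cdot \big(\text{tail}\big)\Big),
\]
or more precisely reorganize \eqref{e:l-hat} as a telescoping/Leibniz-type sum so that the "level $m$" contribution carries a factor of the form $dd^c(\phi_{J+\ell}\circ f^{\ell})$ or $d(\phi_{J+\ell}\circ f^{\ell})\wedge d^c(\phi_{J+\ell'}\circ f^{\ell'})$ pushed forward by the appropriate iterate, times the partial sum $\hat\1_{n-m}$ on the rest. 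The key algebraic point is that $dd^c$ acting on $\hat\Ll_n\1$ hits the weights $e^{\phi_{\bullet}\circ f^{\bullet}}$, and applying Leibniz produces, for each pair of indices, a push-forward $(f^{m})_*$ of a $(1,1)$-form dominated (in the sense of $|\cdot|\leq\cdot$) by a multiple of $(f^{m})^*$-type data with $\Cc^2$-controlled coefficients.

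Next I would bound each pushed-forward piece. A term of the form $(f^{n-N-1})_* \omega_\FS$-type object, weighted by $e^{\Phi}$, is dominated by $\|e^{\Phi}\|_\infty$ times $d^{(k-1)m}\omega_m$ after normalizing as in the definition of $\omega_m = d^{-(k-1)m}(f^m)_*\omega_\FS$; the $\Cc^2$-norms of $\phi_{n+J},\dots$ contribute, via \eqref{e:phi-n}, a factor $\|\phi\|_\infty e^{\frac12 (m+J)^{2/q}}$ for each derivative, and since at most two derivatives and a combinatorial count of $O(m^2)$ pairs of indices occur at level $m$, one gets a prefactor of the shape $c\, m^3 e^{(m+J)^{2/q}} = \eta(m)$ (the cube absorbing the two $\Cc^2$ factors and the $O(m)$ or $O(m^2)$ index count, with $c$ depending only on $\|\phi\|_\infty$ and $\|\phi\|_{\log^q}$). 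The exponential weight $e^{\Phi}$ on $n-m$ remaining iterates is at most $e^{(n-m)\max\phi}$, but since the sum factor $\hat\1_{n-m}$ already accounts for the last $n-m-N$ (or so) of these through $\hat\rho^+_{n-m}$, only the leftover $N$ (or $m$) weights are counted separately, giving the two regimes $e^{m\max\phi}\hat\rho^+_{n-m}$ for $m\le n-N$ and $d^{kN}e^{(n-N)\max\phi}$ for the boundary range $m\ge n-N+1$ (where one no longer has room for a full intermediate operator and must bound $\hat\1_{n-m}$ crudely, picking up at most $d^{kN}$ preimages). This explains the two sums in the statement and the cutoff $n_0=\max(n-N+1,N+1)$.

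The main obstacle will be organizing the Leibniz expansion of $dd^c$ on a composition of $n$-fold products of pulled-back functions so that it telescopes cleanly into the stated single sum over $m$, with the "head" an explicit $(1,1)$-form whose normalized push-forward is exactly $\omega_m$ and the "tail" exactly $\hat\1_{n-m}$ (or a minor variant) — in particular handling carefully that the index shifts in \eqref{e:l-hat} (the weight at step $j$ is $\phi_{n+J-j}$, not a fixed $\phi$) so that the $\Cc^2$ bound invoked at level $m$ is $\|\phi_{m+J}\|_{\Cc^2}\lesssim \|\phi\|_\infty e^{\frac12(m+J)^{2/q}}$, which is where the $(t+J)^{2/q}$ in $\eta$ comes from. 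Once the combinatorial bookkeeping is set up, the current inequalities are routine applications of $|dd^c(\varphi)|\le \|\varphi\|_{\Cc^2}\cdot(\text{reference form})$ and of functoriality of $(f^m)_*$ with respect to the ordering $|R|\le S$, together with the mass normalization $\|(f^m)_*\omega_\FS\|=d^{(k-1)m}$ recalled in Section \ref{ss:bound-pot-push}.
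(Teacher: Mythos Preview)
Your approach is essentially the one the paper takes, but you have glossed over the one genuinely delicate point. The paper packages the Leibniz expansion via Gromov's graph construction: one writes $\hat\1_n=(\pi_n)_*\h$ where $\h(x_0,\dots,x_n)=\exp\big(\sum_j\phi_{n+J-j}(x_j)\big)$ lives on the graph $\Gamma_n\subset(\P^k)^{n+1}$, computes $dd^c\h$ there, and then pushes forward factor by factor. This is exactly your ``direct computation'' in cleaner clothes, and your explanation of the two regimes (factoring out $\hat\rho^+_{n-m}$ versus the crude bound picking up $d^{kN}$) matches the paper's split of the sum at the threshold $m=N$.

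The step you do not address is the handling of the cross terms $i\,\partial\phi_{n+J-m}(x_m)\wedge\dbar\phi_{n+J-m'}(x_{m'})$ for $m\neq m'$. These are $(1,1)$-forms mixing two graph coordinates, and the unweighted Cauchy--Schwarz bound $|a\wedge b|\leq\tfrac12 i a\wedge\bar a+\tfrac12 i b\wedge\bar b$ would make the coefficient of $\omega^{(m)}$ pick up a factor proportional to $n$ (from the count of partners $m'$), not to $n-m$; then $\eta$ would not be a function of a single variable, and the whole point of the estimate --- feeding it into the induction of Lemma~\ref{l:theta-bound} --- would collapse. The paper instead uses the weighted inequality
\[
|i\,\partial\phi_{n+J-m}\wedge\dbar\phi_{n+J-m'}|\leq (m-m'+1)^{-2}\,i\,\partial\phi_{n+J-m}\wedge\dbar\phi_{n+J-m}+(m-m'+1)^{2}\,i\,\partial\phi_{n+J-m'}\wedge\dbar\phi_{n+J-m'},
\]
so that the first piece is summable over $m'$ (since $\sum_j j^{-2}<\infty$) while the second, after summing over $m$, contributes a factor $(n-m')^3$ in front of $\omega^{(m')}$. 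This is exactly where the $t^3$ in $\eta(t)=ct^3e^{(t+J)^{2/q}}$ comes from --- not from an ``$O(m^2)$ count of pairs'' as you write --- and it is what ensures the coefficient of each $\omega^{(m)}$ depends only on $n-m$.
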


Recall that the function $\hat \1_n$ 
is given by
\[
\hat \1_n (y) = \sum_{f^{n}(x)=y} e^{\phi_{n+J} (x) + \phi_{n+J-1} (f(x)) + \cdots + \phi_{J+N+1} (f^{n-N-1} (x)) }.
\]
In order to estimate $dd^c \hat \1_n$, we will use a now classical
construction due to Gromov \cite{gromov2003entropy}.
Define the manifold $\Gamma_n \subset (\P^k)^{n+1}$ by
\[
\Gamma_n :=  \big\{(x,f(x), \dots , f^{n} (x)) : x \in \P^k \big\},
\]
which can also be seen as the graph of the map $(f,f^2, \dots , f^n)$
in the product space $(\P^k)^{n+1}$.
Consider the function $\h$
on $(\P^k)^{n+1}$ given by
\[
\h(x_0, \dots, x_n):= e^{\phi_{n+J} (x_0)+\phi_{n+J-1}(x_1) + \dots + \phi_{J+N+1} (x_{n-N-1})}.
\]
The function $\hat \1_n$ on $\P^k$ is equal to the push-forward of the
function $\h_{|\Gamma_n}$ to the last factor $\P^k$ of $(\P^k)^{n+1}$.
Indeed, denoting by $\pi_n$ the restriction of the projection
$x\mapsto x_n$ to $\Gamma_n$, we have
\[
\pa{\pi_n}_*(\h) (y) =
\sum_{(x_0, \dots, x_n)\in \Gamma_n \colon x_n =y} \h(x)
=\sum_{x\in f^{-n} (y)} e^{\phi_{n+J} (x) + \cdots + \phi_{J+N+1} (f^{n-N-1} (x)) }
=\hat \1_n(y).\]

Recall that, since $\ddc \hat \1_n$ is real, 
estimating $|dd^c \hat \1_n|$
 means finding a good positive closed $(1,1)$-current $S$ on $\P^k$
such that both $S\pm dd^c \hat \1_n$ are positive.
According to the identities above, we have
\[
{dd^c \hat \1_n} =
{\pa{\pi_n}_* \big(dd^c \h} \big).
\]
Thus, we need to estimate $dd^c \h$ on $(\P^k)^{n+1}$ and $\Gamma_n$.
We define  $\omega^{(m)}$ as 
the pullback of the Fubini-Study form $\omega_\FS$ to $(\P^k)^{n+1}$  
by the projection $x\mapsto x_m$.
Equivalently, $\omega^{(m)}$ is a $(1,1)$-form on $(\P^k)^{n+1}$ such that 
$\omega^{(m)}(x)=\omega_\FS(x_m)$.

\begin{lemma} \label{l:ddch} 
There exists a sub-exponential function $\eta(t)=ct^3e^{(t+J)^{2/q}}$ with a positive
constant $c=c(\norm{\phi}_{\log^q},\norm{\phi}_{\infty})$ independent of $n,J$, and $N$  such that
$$\abs{dd^c \h} \leq  \h \sum_{m=0}^{n-N-1} \eta(n-m)  \omega^{(m)}.$$
\end{lemma}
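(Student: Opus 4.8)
The plan is to prove this inequality of smooth $(1,1)$-forms directly on $(\P^k)^{n+1}$, with no reference yet to $\Gamma_n$. Writing $p_m\colon(\P^k)^{n+1}\to\P^k$ for the projection $x\mapsto x_m$ and $\Phi:=\sum_{m=0}^{n-N-1}\phi_{n+J-m}\circ p_m$, we have $\h=e^{\Phi}$ with $\Phi$ real-valued and smooth, so the first step is the classical identity
\[dd^c\h=\h\,\bigl(dd^c\Phi+d\Phi\wedge d^c\Phi\bigr).\]
Since $d\Phi\wedge d^c\Phi$ is a positive $(1,1)$-form (a positive multiple of $i\,\partial\Phi\wedge\overline{\partial\Phi}$) and $\h>0$, it then suffices to bound $\abs{dd^c\Phi}$ and $d\Phi\wedge d^c\Phi$ separately by sums of the form $\sum_m(\cdots)\,\omega^{(m)}$ and add the two bounds.

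The term $dd^c\Phi$ is straightforward: since the factors are independent, $dd^c\Phi=\sum_{m=0}^{n-N-1}p_m^*(dd^c\phi_{n+J-m})$, and on $\P^k$ one has $\abs{dd^c\psi}\lesssim\norm{\psi}_{\Cc^2}\,\omega_\FS$ for any smooth $\psi$ (fixed metric), so after pulling back, $\abs{dd^c\Phi}\lesssim\sum_m\norm{\phi_{n+J-m}}_{\Cc^2}\,\omega^{(m)}$. For the term $d\Phi\wedge d^c\Phi$, I would write $\partial\Phi=\sum_m\alpha_m$ with $\alpha_m:=p_m^*(\partial\phi_{n+J-m})$ and apply the pointwise Cauchy–Schwarz inequality for $(1,0)$-forms: for any positive reals $t_0,\dots,t_{n-N-1}$,
\[i\Bigl(\sum_m\alpha_m\Bigr)\wedge\overline{\Bigl(\sum_m\alpha_m\Bigr)}\;\le\;\Bigl(\sum_m t_m\Bigr)\sum_m t_m^{-1}\,i\,\alpha_m\wedge\overline{\alpha_m},\]
which one checks by evaluating both sides on an arbitrary complex tangent vector. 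The key choice is $t_m:=(n-m)^{-2}$ (legitimate because $n-m\ge N+1\ge1$ on the relevant range): then $\sum_m t_m\le\sum_{j\ge1}j^{-2}$ is an absolute constant, while $t_m^{-1}=(n-m)^2$. Combined with $i\,\alpha_m\wedge\overline{\alpha_m}=p_m^*\bigl(i\,\partial\phi_{n+J-m}\wedge\overline{\partial\phi_{n+J-m}}\bigr)\lesssim\norm{\phi_{n+J-m}}_{\Cc^1}^2\,\omega^{(m)}$, this gives $d\Phi\wedge d^c\Phi\lesssim\sum_m(n-m)^2\norm{\phi_{n+J-m}}_{\Cc^1}^2\,\omega^{(m)}$.

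The final step is to feed in the estimate $\norm{\phi_{n+J-m}}_{\Cc^2}\le c\norm{\phi}_\infty e^{\frac{1}{2}(n+J-m)^{2/q}}$ from \eqref{e:phi-n} (together with $\norm{\cdot}_{\Cc^1}\le\norm{\cdot}_{\Cc^2}$). Since $n+J-m=(n-m)+J$, the coefficient of $\omega^{(m)}$ in the bound for $\abs{dd^c\Phi+d\Phi\wedge d^c\Phi}$ becomes $\lesssim(n-m)^2 e^{((n-m)+J)^{2/q}}$, with implied constant depending only on $\norm{\phi}_\infty$ and $q$ (a fortiori on $\norm{\phi}_{\log^q}$ and $\norm{\phi}_\infty$, as in the statement); using $1\le n-m$ this is at most $\eta(n-m)$ with $\eta(t)=c\,t^3e^{(t+J)^{2/q}}$, and multiplying through by $\h$ concludes. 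I expect the only genuinely delicate point to be the cross terms $\alpha_m\wedge\overline{\alpha_{m'}}$ with $m\ne m'$ in $d\Phi\wedge d^c\Phi$: the weighted Cauchy–Schwarz estimate is exactly what absorbs them into the diagonal terms at the price of a mere polynomial factor $(n-m)^2$, precisely the kind of loss that the sub-exponential prefactor of $\eta$ tolerates without altering the governing exponential $e^{((n-m)+J)^{2/q}}$.
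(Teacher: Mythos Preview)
Your proposal is correct and follows essentially the same approach as the paper: expand $i\partial\bar\partial(e^\Phi)$ as $\h$ times a Hessian part plus a gradient-squared part, bound the Hessian part trivially by $\|\phi_{n+J-m}\|_{\Cc^2}\,\omega^{(m)}$, and handle the cross terms in the gradient-squared part by a weighted Cauchy--Schwarz with summable weights. The only cosmetic difference is how the Cauchy--Schwarz step is organized: the paper applies it pairwise with weights $(m-m'+1)^{\pm 2}$ and then sums, which produces an extra factor of $(n-m)$ and hence the cubic prefactor $(n-m)^3$ in $\eta$; your global inequality $\bigl|\sum_m\alpha_m\bigr|^2\le\bigl(\sum_m t_m\bigr)\sum_m t_m^{-1}|\alpha_m|^2$ with $t_m=(n-m)^{-2}$ is cleaner and already gives $(n-m)^2$, which you then harmlessly relax to $(n-m)^3$ to match the stated $\eta$.
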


\begin{proof}
A direct computation gives
\begin{equation*} 
i\partial \bar{\partial} \h
= \h  \Big( \sum_{m=0}^{n-N-1} i\ddbar \phi_{n+J-m}(x_m)
+\sum_{m,m'=0}^{n-N-1} i\partial \phi_{n+J-m}(x_m) \wedge \dbar \phi_{n+J-m'}(x_{m'}) \Big).
\end{equation*}
For the first sum, observe that 
$$|i\ddbar \phi_{n+J-m}(x_m)|\lesssim \|\phi_{n+J-m}\|_{\Cc^2} \omega^{(m)} (x)\lesssim e^{(n+J-m)^{2/q}} \omega^{(m)}(x).$$
For the second sum, consider $m'\leq m \leq n-N-1$. By using Cauchy-Schwarz's inequality, we have
\begin{eqnarray}\label{e:phi_cs_1}
\lefteqn{ | i\partial   \phi_{n+J-m}(x_m) \wedge \dbar \phi_{n+J-m'}(x_{m'})| }\\
 & \leq & (m-m'+1)^{-2}  i\partial \phi_{n+J-m}(x_m) \wedge \dbar \phi_{n+J-m}(x_m) \nonumber \\
  &  &  + (m-m'+1)^2  i\partial \phi_{n+J-m'}(x_{m'}) \wedge \dbar \phi_{n+J-m'}(x_{m'}) \nonumber \\
& \lesssim & (m-m'+1)^{-2} \|\phi_{n+J-m}\|_{\Cc^1}^2\omega^{(m)}(x) + (m-m'+1)^2 \|\phi_{n+J-m'}\|_{\Cc^1}^2\omega^{(m')}(x) \nonumber \\
& \lesssim & (m-m'+1)^{-2}  e^{(n+J-m)^{2/q}} \omega^{(m)}(x) + (n-m'+1)^2 e^{(n+J-m')^{2/q}} \omega^{(m')}(x). \nonumber
\end{eqnarray}
This
and the fact that $\sum_{j=1}^\infty j^{-2}$ is finite
imply
 that
\begin{eqnarray} \label{e:phi_cs_2} 
\lefteqn{\Big|  \sum_{0\leq m'\leq m\leq n-N-1}  i\partial \phi_{n+J-m}(x_m) \wedge \dbar \phi_{n+J-m'}(x_{m'})\Big|}  \\
&\lesssim & \sum_{m=0}^{n-N-1} e^{(n+J-m)^{2/q}} \omega^{(m)}(x) + \sum_{m'=0}^{n-N-1} (n-m'+1)^3 e^{(n+J-m')^{2/q}} \omega^{(m' )}(x) \nonumber \\
&\lesssim &
\sum_{m=0}^{n-N-1} (n-m)^3e^{(n+J-m)^{2/q}} \omega^{(m)}(x). \nonumber
\end{eqnarray}
We obtain by symmetry a similar estimate for the case where $m< m'\leq n-N-1$. 

Finally, combining all the above identities and estimates we get
$$|i\partial \bar{\partial} \h| \lesssim \h \sum_{m=0}^{n-N-1} (n-m)^3e^{(n+J-m)^{2/q}} \omega^{(m)}.$$
The lemma follows. 
\end{proof}

\begin{proof}[Proof of Proposition \ref{p:laplace-1-n}]
We are only
interested in the restriction of $\h$ to the graph $\Gamma_n$.
We deduce from Lemma \ref{l:ddch} that
\begin{equation} \label{e:sum-ddc-0}
\abs{dd^c \hat \1_n} = \abs{\pa{\pi_n}_* dd^c \h}
\leq  \sum_{m=0}^{n-N-1} \eta(n-m) { \pa{\pi_n}_*  \big(\h  \omega^{(m)} \big)}.
\end{equation}

We split the last sum into the two sums corresponding to $m<N$ and $m\geq N$. 
Note that when $n\leq 2N$, in the sum
in \eqref{e:sum-ddc-0} 
we always have $m<N$ and the first sum in the statement of the proposition 
vanishes. 
So, for simplicity, we assume that $n>2N$ and we will see 
 in the proof below that the arguments also work
when $n\leq 2N$.

For $m< N$, using the definition of $\phi_m$ 
we have $\|\phi-\phi_m\|_\infty=\|\psi_m\|_\infty\leq c'm^{-2}$ and
hence $\max\phi_m\leq \max\phi+c'm^{-2}$ for
some positive
constant $c'$ 
which may depend on
$\norm{\phi}_{\log^q}$. 
It follows that $\h\lesssim e^{(n-N)\max\phi}$. 
Then, using the definition of $\Gamma_n$, we have for $m< N$
\begin{eqnarray*}
\pa{\pi_n}_* \big( \h  \omega^{(m)}\big) &\lesssim& e^{(n-N)\max\phi} \pa{\pi_n}_* \big(\omega^{(m)}\big)
=e^{(n-N)\max\phi}
{d^{km}} (f^{n-m})_*(\omega_\FS)\\
&=& e^{(n-N)\max\phi} {d^{km}} d^{(k-1)(n-m)}\omega_{n-m}.
\end{eqnarray*}
Thus, 
\begin{eqnarray} \label{e:sum-ddc-1}
\sum_{m=0}^{N-1} \eta(n-m) { \pa{\pi_n}_*  \big(\h  \omega^{(m)} \big)} &\lesssim& \sum_{m=0}^{N-1} \eta(n-m)  e^{(n-N)\max\phi} 
{d^{km}}
d^{(k-1)(n-m)}\omega_{n-m} \nonumber \\
&\leq &  \sum_{m=n-N+1}^n {d^{kN}}
 \eta(m)  e^{(n-N)\max\phi} d^{(k-1)m}\omega_m.
\end{eqnarray}
The last expression is the second sum in the statement
of
the present proposition
(this step also works for $n\leq 2N$ 
but in this case the above sums $\sum_{0}^{N-1}$ and 
$\sum_{n-N+1}^n$ are replaced by $\sum_{0}^{n-N-1}$ and 
$\sum_{N +1}^n$ respectively). 
In order to finish the proof, it is enough to have a similar estimate for $m\geq N$
(this step is superfluous when $n\leq 2N$, see \eqref{e:sum-ddc-0}).

As above, using the definition of $\h$ and the estimates
on $\max\phi_m$ and $\|\phi-\phi_m\|_\infty$,
we have 
$$\h\lesssim e^{(n-m)\max\phi} e^{\phi(x_0)+\phi(x_1)+\cdots+\phi(x_{m-N-1})}
\lesssim e^{(n-m)\max\phi} \h'$$
with  
$$\h':=e^{\phi_{m+J}(x_0)+\phi_{m+J-1}(x_1)+\cdots+\phi_{J+N+1}(x_{m-N-1})}.$$
Note that 
the sum in the definition of $\h'$ contains $m-N$ terms while the one of $\h$ contains $n-N$ terms.
The specific choice of $\h'$ is convenient for our next computation as it is related to the function $\hat\1_m$.

Consider the map $\pi':\Gamma_n\to(\P^k)^{n-m+1}$ defined by $\pi'(x):=x':=(x_m,\ldots,x_n)$.
Denote by $\Gamma'$ the image of $\Gamma_n$ by $\pi'$.
It is the graph of the map $(f,\ldots, f^{n-m})$ from $\P^k$ to $(\P^k)^{n-m}$.  
We also have for $x'\in\Gamma'$
$$\pi'^{-1}(x')=\big\{ \big(y,f(y),\ldots,f^{m-1}(y),x'\big) \quad \text{with} \quad y\in f^{-m}(x_{m})\big\}.$$
So $\pi':\Gamma_n\to\Gamma'$ is a ramified covering of degree $d^{km}$.

Consider  the map $\pi'':\Gamma'\to\P^k$ defined by $\pi''(x'):=x_n$. We have, for $x_n\in\P^k$,
$$\pi''^{-1}(x_n)=\big\{\big(z,f(z),\ldots,f^{n-m}(z)\big) \quad \text{with} \quad z\in f^{-n+m}(x_n)\big\}.$$
So $\pi'':\Gamma'\to\P^k$ is a ramified covering of degree $d^{k(n-m)}$.
We have
  $\pi_n=\pi''\circ\pi'$. Observe that $\pi'_*(\h'\omega^{(m)})$ is a $(1,1)$-form on $\Gamma'$ such that 
\begin{eqnarray*}
\pi'_*(\h'\omega^{(m)})(x') &=& 
\Big(\sum_{y\in f^{-m}(x_{m})} e^{\phi_{m+J}(y)+\cdots+\phi_{J+N+1}(f^{m-N-1}(y))} \Big) \omega_\FS(x_m) \\
&\leq& \hat\rho^+_m \omega_\FS(x_m)=: \hat\rho^+_m\omega'(x'),
\end{eqnarray*}
where we define $\omega'$ as the pull-back of $\omega_\FS$ to $\Gamma'$
by the map $x'\mapsto x_m$. We also have
$$\pi''_*(\omega')(x_n) = \sum_{x_m\in f^{-n+m}(x_n)} \omega_\FS(x_m)
=  (f^{n-m})_*(\omega_\FS)(x_n)= d^{(k-1)(n-m)}\omega_{n-m}(x_n).$$
Thus,
$$(\pi_n)_*(\h\omega^{(m)})\lesssim  e^{(n-m)\max\phi}\pi''_*\pi'_*(\h'\omega^{(m)})
\leq  e^{(n-m)\max\phi}\hat\rho_{m}^+  d^{(k-1)(n-m)}\omega_{n-m}$$
and
\begin{eqnarray} \label{e:sum-ddc-2}
\sum_{m=N}^{n-N-1} \eta(n-m) { \pa{\pi_n}_*  \big(\h  \omega^{(m)} \big)} &\lesssim& \sum_{m=N}^{n-N-1} \eta(n-m)  
e^{(n-m)\max\phi} \hat\rho^+_{m}  d^{(k-1)(n-m)}  \omega_{n-m} \nonumber \\
&=&  \sum_{m=N+1}^{n-N} \eta(m)  e^{m\max\phi} \hat\rho^+_{n-m}  d^{(k-1)m}\omega_m. 
\end{eqnarray}
Finally, we deduce the proposition
 from \eqref{e:sum-ddc-0},
\eqref{e:sum-ddc-1}, and \eqref{e:sum-ddc-2}
by multiplying $\eta$ with a large enough constant.
\end{proof}

\subsection{Proof of Proposition \ref{p:theta-n-1-n}} \label{ss:proof-theta-n-1-n}
We are working under the hypotheses of Theorem \ref{t:main-lam}.
We will obtain Proposition \ref{p:theta-n-1-n}  using Lemmas \ref{l:theta-bound} and \ref{l:eq-1-n} below.

\begin{lemma}\label{l:theta-bound}
Under the hypotheses of Theorem \ref{t:main-lam}, given an integer $J\geq 0$, we have 
$\hat\theta_n\leq d^{kN}$
  for all $n>N$, with $N$ large enough. In particular, the sequences $(\theta_n)$ 
  and $(\hat\theta_n)$ are bounded for all $J\geq 0$ and $N\geq 0$.
\end{lemma}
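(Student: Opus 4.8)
The plan is to bound $\hat\theta_n = \hat\rho_n^+/\hat\rho_n^-$ by comparing the functions $\log \hat\1_n^* = \log \hat\1_n - \log \hat\rho_n^-$ using the oscillation estimates available for dynamical potentials. The starting point is the crucial estimate in Proposition \ref{p:laplace-1-n}, which controls $|dd^c \hat\1_n|$ by a combination of the currents $\omega_m$ with coefficients involving $\hat\rho^+_{n-m}$. Dividing by $\hat\rho_n^-$ to normalize, I would aim to produce an estimate of the shape
\[
\Big| dd^c \frac{\hat\1_n}{\hat\rho_n^-} \Big| \lesssim \sum_{m} \eta(m) e^{m\max\phi} \frac{\hat\rho^+_{n-m}}{\hat\rho_n^-} d^{(k-1)m}\omega_m + (\text{boundary terms}),
\]
and then turn this into a control on $\Omega(\hat\1_n^*)$ via Corollary \ref{c:compare-sup}, using that the dynamical potential of $\omega_m$ has oscillation $\lesssim A^m$ by Lemma \ref{l:u-n-sup} with $A$ chosen close to $1$, specifically with $A$ small enough that $A \cdot e^{\max\phi - \min\phi} = A e^{\Omega(\phi)} < d$; this is exactly where the hypothesis $\Omega(\phi) < \log d$ enters, since the mass of $d^{(k-1)m}\omega_m$ is $d^{(k-1)m}$ and the coefficient $e^{m\max\phi}$ together with the normalization produces a factor $(e^{\Omega(\phi)}/d)^m$ type decay once we also control the ratios $\hat\rho^+_{n-m}/\hat\rho_n^-$.

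The mechanism for controlling the ratios is a submultiplicativity/comparison argument: one shows $\hat\rho^+_{n-m} \lesssim e^{(\min\phi) m}\hat\rho_n^-$ up to sub-exponential factors, or more precisely that the normalized function $\hat\1_{n-m}^*$ is comparable to a piece of $\hat\1_n$ pulled back, so that $\hat\rho^+_{n-m}/\hat\rho_n^-$ grows at most like $e^{-m\min\phi}$ times a controlled constant. Combined with the $e^{m\max\phi}$ in the coefficient and the $A^m$ from the potential oscillation, the series $\sum_m \eta(m) A^m e^{m(\max\phi-\min\phi)} d^{-m} \cdot d^{(k-1)m} \cdot d^{-(k-1)m}$... — the bookkeeping here needs care — should converge and, crucially, be dominated by the tail beyond index $N$, which can be made as small as we like by taking $N$ large; the terms with small index contribute a bounded amount. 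This would give $\Omega(\log \hat\1_n^*) \leq (\text{small}) + (\text{bounded})$, hence $\hat\theta_n \leq d^{kN}$ for $N$ large, uniformly in $n > N$ and in $J$.

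The first step I would carry out is to take logarithms carefully: since $\hat\1_n$ is a sum of positive terms, $dd^c \log \hat\1_n \geq -|dd^c\hat\1_n|/\hat\1_n \geq -|dd^c\hat\1_n|/\hat\rho_n^-$, so $|dd^c \log\hat\1_n^*|$ is controlled (up to the positive part of $dd^c\log$ of a sum, which by Cauchy–Schwarz is also dominated by the same current — this is the standard trick, cf.\ the computation in Lemma \ref{l:ddch}). Then apply Corollary \ref{c:compare-sup} to $g = \log\hat\1_n^*$ with $S_0$ the current on the right-hand side of Proposition \ref{p:laplace-1-n} divided by $\hat\rho_n^-$. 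The second step is the ratio bound $\hat\rho^+_{n-m} \lesssim e^{-m\min\phi}\hat\rho_n^-$, which should follow by writing $\hat\1_n$ as a composition: the inner $n-m$ iterates build something comparable to $\hat\1_{n-m}$ and the outer $m$ iterates contribute at least $e^{m\min\phi}$ at every preimage point. The third step is the convergence of the resulting series and the choice of $N$.

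The main obstacle I expect is the bookkeeping in steps one and two simultaneously: making the normalization by $\hat\rho_n^-$ interact correctly with the index-shifted quantities $\hat\rho^+_{n-m}$ appearing in Proposition \ref{p:laplace-1-n}, and verifying that the sub-exponential factors $\eta(m) = cm^3 e^{(m+J)^{2/q}}$ (with $q > 2$, so $(m+J)^{2/q}$ is genuinely sub-linear) are indeed swallowed by the geometric decay $(Ae^{\Omega(\phi)}/d)^m$ — this requires $A e^{\Omega(\phi)} < d$ strictly, which is available precisely because $\Omega(\phi) < \log d$ and $A$ can be taken arbitrarily close to $1$. A subtle point is uniformity in $J$: the constant $c$ in $\eta$ depends on $\|\phi\|_{\log^q}$ and $\|\phi\|_\infty$ but not on $J$, and the factor $e^{(m+J)^{2/q}}$ for fixed summation range up to $m = n$ does grow with $J$, so one must be careful that $N$ is chosen \emph{after} nothing depending on $J$ — re-reading the statement, $N$ is allowed to depend only on making a tail small, and since the tail $\sum_{m > N}$ of a convergent series goes to $0$ uniformly only if the series converges uniformly in $J$, I would need the $J$-dependence to be absorbed by fixing $J$ first or by noting the claim is "for all $J$" with $N$ possibly depending on $J$ as well — the statement says "given an integer $J \geq 0$", so $N$ may depend on $J$, which removes this difficulty.
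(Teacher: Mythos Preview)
Your overall architecture is correct and matches the paper: use Proposition~\ref{p:laplace-1-n} to bound $|dd^c\hat\1_n|$, convert this to an oscillation bound via Corollary~\ref{c:compare-sup}, feed in Lemma~\ref{l:u-n-sup} for $\Omega(u_m)$, and exploit the geometric decay coming from $\Omega(\phi)<\log d$ to make the tail (indices $m>N$) small. But two points need correction.

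First, the detour through $\log\hat\1_n^*$ is unnecessary and makes life harder. The paper simply observes that $\hat\theta_n\leq K$ is equivalent to $\Omega(\hat\1_n)\leq (K-1)\hat\rho_n^-$, and then applies Corollary~\ref{c:compare-sup} directly to $g=\hat\1_n$. Your route requires controlling $i\partial\hat\1_n\wedge\bar\partial\hat\1_n/\hat\1_n^2$, which is \emph{not} immediately furnished by Lemma~\ref{l:ddch}: that lemma bounds $i\partial\h\wedge\bar\partial\h$ \emph{before} the pushforward $(\pi_n)_*$, and pushing forward does not commute with taking $\log$ of a sum. One can recover a bound via a fibrewise Cauchy--Schwarz, but it is extra work for no gain.

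Second, and this is the genuine gap: your ratio bound $\hat\rho_{n-m}^+\lesssim e^{-m\min\phi}\hat\rho_n^-$ cannot be obtained from the compositional structure alone. What the composition gives (this is the paper's inequality \eqref{e:hat-rho-n-lower}) is
\[
\hat\rho_n^-\gtrsim d^{km}e^{m\min\phi}\,\hat\rho_{n-m}^-,
\]
i.e.\ a relation between the two \emph{minima}. To pass from $\hat\rho_{n-m}^-$ to $\hat\rho_{n-m}^+$ you need $\hat\theta_{n-m}$ to be bounded---precisely the statement you are proving. The paper closes this circularity by an explicit induction on $n$: a base case $N<n\leq 2N$, where the first sum in Proposition~\ref{p:laplace-1-n} is empty and one bounds directly using only the boundary terms; and an inductive step for $n>2N$, where one assumes $\hat\theta_m\leq d^{kN}$ for all $m<n$, feeds $\hat\rho_{n-m}^+\leq d^{kN}\hat\rho_{n-m}^-$ into the estimate, and obtains $\Omega(\hat\1_n)/\hat\rho_n^-\lesssim d^{kN}\sum_{m>N}\eta(m)(e^{\Omega(\phi)}/\delta)^m$, which is at most $d^{kN}/2$ once $N$ is large. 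Without this induction your argument does not close. (Note also the missing factor $d^{-km}$ in your ratio: it is what makes the $d^{(k-1)m}$ from the current and the $d^m$ hidden in $\Omega(u_m)\lesssim (d/\delta')^m$ combine into a single $d^{-m}$ in the final series.)
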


\begin{proof}
Observe that the last assertion is a consequence of the first one. Indeed,
we can first fix $J$ and $N$ satisfying the first assertion of the lemma. Then, by Lemma \ref{l:hat-theta-n}, the 
sequence $(\theta_n)$ is bounded. Applying again
Lemma \ref{l:hat-theta-n}
 for arbitrary $J$ and $N$ gives that the sequence 
$(\hat\theta_n)$ is also bounded. We prove now the first assertion in the lemma with $J$ fixed and $N$ large enough.

Observe that, by the definition of $\hat\rho^\pm_n, \hat \theta_n$, and $\Omega(\cdot)$, for every $K\geq 1$
 the two inequalities 
$\hat \theta_n \leq K$ and $\Omega (\hat \1_n) \leq (K-1)\hat \rho^-_n$
are equivalent.
Hence, in order to 
get the first assertion in the lemma, it is enough to show that $\Omega (\hat \1_n) / \hat \rho^-_n \leq d^{kN}/2$.
The constants that we use below are independent of $N$ and $n$.
Fix a constant $\delta$ such that $e^{\Omega (\phi)}< \delta<d$.
By 
the estimate on
 $\|\psi_n\|_\infty$ in
\eqref{e:phi-n}, for every $j$ sufficiently large, we have $\Omega (\phi_j)\leq \Omega (\phi) + \Omega (\psi_j) < \log \delta$.
Since we assume that $N$ is large enough, the last inequality holds for  all 
$j\geq N$.

We use Proposition \ref{p:laplace-1-n} and Corollary \ref{c:compare-sup}
in order to estimate $\Omega(\hat \1_n)$ in terms of $\Omega(u_m)$.
Recall that $u_m$ is the dynamical potential of $\omega_m$.
We also use Lemma \ref{l:u-n-sup}, which gives 
 $\Omega(u_m)\lesssim d^m \delta'^{-m}$
for any $\delta'$ such that $\delta <\delta'<d$.
 More precisely, we obtain from those results
that
$$ \Omega (\hat \1_n )  \lesssim  
\sum_{m=N+1}^{n-N} \eta(m) 
e^{m  \max \phi} {d^{km}}\delta'^{-m}
\hat \rho^+_{n-m}
+
\sum_{m=\max (n-N+1, N +1)}^{n}
d^{kN} \eta(m) e^{(n-N)\max \phi}
 d^{km}
 \delta'^{-m}.$$
Since 
$\delta<\delta'$ and
 $N$ is large, the fact that $\eta$ is sub-exponential and independent of $n$ and $N$ implies that
\begin{equation} \label{e:Omega-hat-1}
 \Omega (\hat \1_n )\lesssim 
\sum_{m=N+1}^{n-N} e^{m \max \phi} d^{km} \delta^{-m}\hat \rho^+_{n-m}
 +\sum_{m=\max (n-N+1, N+1)}^{n}
d^{kN}
e^{(n-N)\max \phi}
d^{km}\delta^{-m}.
\end{equation}

We now distinguish two cases.

\medskip\noindent
{\bf Case 1.} Assume that $N<n\leq 2N$. In this case, the first sum
in \eqref{e:Omega-hat-1} is empty. We thus deduce from \eqref{e:Omega-hat-1} that
$$\Omega (\hat \1_n )\lesssim
 d^{kN} {e^{ (n-N)\max \phi}}
\Big(\frac{ d^{k(N+1)}}{\delta^{N+1}} + \cdots +\frac{ d^{kn}}{\delta^{n}}\Big)
\lesssim d^{kN} {e^{ (n-N)\max \phi}} \frac{ d^{kn}}{\delta^{n}}\cdot$$
On the other hand, by
the
definitions of
 $\hat\rho^-_n$ we have the following general estimates 
(with $n\geq N$ in the first inequality and $n-m\geq N$ in the second one)
\begin{equation} \label{e:hat-rho-n-lower}
\hat \rho^-_{n}
 \gtrsim
d^{kn}
e^{(n-N)\min \phi}
\qquad \text{and} \qquad \hat\rho_n^-\gtrsim d^{km} e^{m\min\phi} \hat\rho^-_{n-m}.
\end{equation}
The first inequality and the above estimate of $\Omega (\hat \1_n)$ imply that
$$ {\Omega (\hat \1_n )\over \hat \rho^-_{n}} \lesssim
 d^{kN} {e^{ (n-N)\Omega(\phi)} \over \delta^n} \leq  d^{kN} {e^{n\Omega(\phi)} \over \delta^n}\cdot$$
Hence, $\Omega (\hat \1_n )/\hat \rho^-_{n} \leq d^{kN}/2$
 because $N$ is chosen large enough and $\delta>e^{\Omega(\phi)}$. The lemma in this case follows.

\medskip\noindent
{\bf Case 2.} Assume now that $n>2N$. By induction on $n$ and the previous case, we can
assume that  
$\Omega (\hat \1_m )/\hat \rho^-_{m} \leq d^{kN}/2$, which
 implies
 $\hat\rho_m^+\leq d^{kN} \hat\rho_m^-$, for all $m<n$. We need to
 prove the same inequality for $m=n$. From \eqref{e:Omega-hat-1} and the induction hypothesis, we have 
\begin{eqnarray*}
\Omega (\hat \1_n ) &\lesssim& 
d^{kN}
\sum_{m=N+1}^{n-N} e^{m \max \phi} d^{km}  \delta^{-m} \hat \rho^-_{n-m} +
d^{kN}\sum_{m=n-N+1}^{n}
e^{(n-N)\max \phi} d^{km}\delta^{-m} \\
&\lesssim &  d^{kN}  \sum_{m=N+1}^{n-N} e^{m \max \phi} d^{km} \delta^{-m} \hat \rho^-_{n-m} 
+
d^{kN}
e^{(n-N)\max \phi} d^{kn}\delta^{-n} .
\end{eqnarray*}
This and the second inequality in \eqref{e:hat-rho-n-lower} imply that
$$\Omega (\hat \1_n )\lesssim d^{kN} \sum_{m=N+1}^{n-N} e^{m \Omega(\phi)} \delta^{-m}  \hat\rho^-_{n} +
d^{kN} e^{(n-N)\max \phi} d^{kn}\delta^{-n} .$$
Then, by the first inequality in
 \eqref{e:hat-rho-n-lower} and using that $\delta>e^{\Omega(\phi)}$ and $n>2N$, we obtain
$${\Omega (\hat \1_n )\over \hat\rho_n^-} \lesssim d^{kN} \sum_{m=N+1}^{n-N} e^{m \Omega(\phi)}\delta^{-m}  +
d^{kN} e^{(n-N)\Omega(\phi)} \delta^{-n} \lesssim d^{kN} e^{N \Omega(\phi)} \delta^{-N}.$$

Recall that all the constants involved in our computations do not depend on $n$ and $N$.
Since $N$ is chosen large enough, we obtain that 
$\Omega (\hat \1_n )/\hat\rho_n^-\leq d^{kN}/2$. 
This ends the proof of the lemma.
\end{proof}

\begin{lemma} \label{l:eq-1-n}
Under the hypotheses of Theorem \ref{t:main-lam}, for all $J\geq 0$, $N\geq 0$,
and $p>0$, the sequence 
$\|\hat\1_n^*\|_{\log^p}$ is bounded. In particular, the sequence of functions $\hat\1_n^*$
 is equicontinuous.
\end{lemma}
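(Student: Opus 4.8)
The plan is to control the $\log^p$-seminorm of $\hat\1_n^*$ by applying the comparison results of Section \ref{s:preliminary} to the estimate of $dd^c\hat\1_n$ from Proposition \ref{p:laplace-1-n}. First I would normalize: since $\hat\1_n^* = (\hat\rho_n^-)^{-1}\hat\1_n$, we have $dd^c\hat\1_n^* = (\hat\rho_n^-)^{-1} dd^c\hat\1_n$, so I would divide the bound in Proposition \ref{p:laplace-1-n} through by $\hat\rho_n^-$. Using the lower bounds \eqref{e:hat-rho-n-lower} for $\hat\rho_n^-$ (namely $\hat\rho_n^- \gtrsim d^{km}e^{m\min\phi}\hat\rho_{n-m}^-$ and $\hat\rho_n^- \gtrsim d^{kn}e^{(n-N)\min\phi}$) together with the bound $\hat\rho_{n-m}^+ \le d^{kN}\hat\rho_{n-m}^-$ that follows from Lemma \ref{l:theta-bound}, each term $\eta(m)e^{m\max\phi}\hat\rho_{n-m}^+ d^{(k-1)m}\omega_m / \hat\rho_n^-$ becomes $\lesssim \eta(m) d^{kN} e^{m\Omega(\phi)} d^{-m}\omega_m$, and similarly for the second sum. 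Fixing $\delta$ with $e^{\Omega(\phi)}<\delta<d$ and using that $\eta$ is sub-exponential (so $\eta(m)e^{m\Omega(\phi)} \lesssim \delta^m$ for $m$ large, with the small values of $m$ absorbed into the constant), I would obtain an estimate of the form
\[
\abs{dd^c \hat\1_n^*} \lesssim \sum_{m=0}^\infty \beta^m \omega_m
\]
for some $0<\beta<1$ (concretely $\beta = \delta/d$), with a constant independent of $n$ (but depending on $N$, $J$, which are now fixed).

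Once this uniform bound on $dd^c\hat\1_n^*$ in terms of $\sum \beta^m\omega_m$ is established, the conclusion is immediate: Lemma \ref{l:ddc-dyn-log} applies verbatim (with $g = \hat\1_n^*$) and yields $\|\hat\1_n^*\|_{\log^p} \le c(p,\beta)$ for every $p>0$, with $c$ independent of $n$. The boundedness of $\{\|\hat\1_n^*\|_{\log^p}\}$ in turn gives equicontinuity of the family $\{\hat\1_n^*\}$, since a uniform $\log^p$-bound is a uniform modulus-of-continuity estimate.

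The main obstacle I anticipate is the bookkeeping in passing from Proposition \ref{p:laplace-1-n} to the clean geometric-series bound. One has to be careful that the sub-exponential factors $\eta(m)$, the powers $e^{m\max\phi}$, the mass factors $d^{(k-1)m}$, and the dynamical potential growth contribute consistently, and that the ratios $\hat\rho_{n-m}^+/\hat\rho_n^-$ are genuinely controlled uniformly in $n$ — this is exactly where Lemma \ref{l:theta-bound} (with $N$ chosen large, $J$ arbitrary but fixed) is essential, because it converts $\hat\rho_{n-m}^+$ into $\hat\rho_{n-m}^-$, which then cancels against the multiplicative lower bound for $\hat\rho_n^-$. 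A secondary subtlety is that the currents $\omega_m$ in Proposition \ref{p:laplace-1-n} already have unit mass (the factor $d^{(k-1)m}$ having been extracted), so no further normalization of the $\omega_m$ is needed; one only needs that the coefficients $\beta^m$ are summable, which is guaranteed by $\delta<d$, i.e. by the standing hypothesis $\Omega(\phi)<\log d$. Finally, I would note that the statement is stated for $\hat\1_n^*$, and that the corresponding statement for $\1_n^*$ — needed for Proposition \ref{p:theta-n-1-n} — will follow by combining this lemma with Lemma \ref{l:hat-theta-n}, which compares $\rho_n^\pm$ with $\hat\rho_n^\pm$ up to bounded multiplicative constants.
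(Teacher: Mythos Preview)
Your proposal is correct and follows essentially the same route as the paper: divide the estimate of Proposition \ref{p:laplace-1-n} by $\hat\rho_n^-$, use the boundedness of $\hat\theta_n$ (Lemma \ref{l:theta-bound}) together with the lower bounds \eqref{e:hat-rho-n-lower} to reduce to a bound $|dd^c\hat\1_n^*|\lesssim \sum_m \beta^m\omega_m$ with $\beta<1$, and then invoke Lemma \ref{l:ddc-dyn-log}. One small remark: since the lemma is stated for \emph{all} $N\ge 0$, you should use the last assertion of Lemma \ref{l:theta-bound} (that $(\hat\theta_n)$ is bounded for every $J,N$) rather than the specific bound $\hat\theta_n\le d^{kN}$, which is only established for $N$ large; this changes nothing in the argument, as the bound is simply absorbed into the implicit constant.
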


\begin{proof}
We only need to consider $n>2N$, and the implicit
constants below may depend on $N$. 
We will use Lemma \ref{l:ddc-dyn-log} and need to estimate $dd^c \hat \1_n^*$. 
By Lemma \ref{l:theta-bound}
 the sequence $(\hat \theta_n)$ is bounded.
This and Proposition \ref{p:laplace-1-n} imply that
$$\abs{dd^c \hat \1_n^*} \lesssim {1\over \hat\rho^-_n}
\Big( \sum_{m=N+1}^{n-N} \eta(m) e^{m\max \phi}  \hat \rho^-_{n-m} d^{(k-1)m} \omega_{m}
 + \sum_{m=n-N+1}^{n}\eta(m) e^{(n-N)\max \phi} d^{(k-1)m} \omega_{m}\Big).
$$
Then, using
the two inequalities in \eqref{e:hat-rho-n-lower}, we obtain
\begin{eqnarray*}
\abs{dd^c \hat \1_n^*} &\lesssim&  \sum_{m=N+1}^{n-N} \eta(m) e^{m\Omega(\phi)}  d^{-m} \omega_{m}
 + \sum_{m=n-N+1}^{n} \eta(m) e^{(n-N)\Omega(\phi)}  d^{(k-1)m-kn} \omega_{m}\\
&\lesssim& \sum_{m=0}^\infty \eta(m) e^{m\Omega(\phi)}  d^{-m} \omega_{m}.
\end{eqnarray*}
Finally, since $\eta$ is sub-exponential and $e^{\Omega (\phi)}<d$,
Lemma \ref{l:ddc-dyn-log} implies the result.
\end{proof}

\noindent
\proof[End of the proof of Proposition \ref{p:theta-n-1-n}]
By Lemma \ref{l:theta-bound}, we already know that the sequence $(\theta_n)$ is bounded. 
Since $\min\1^*_n=1$, we have $\max\1^*_n=\theta_n$,
hence
the sequence $(\1^*_n)$ is uniformly bounded.
 In order to show that this sequence is equicontinuous, it is enough
 to approximate it uniformly by an equicontinuous sequence.

Take $N=0$. Fix an arbitrary constant $0<\epsilon<1$. 
Since $\|\phi-\phi_m\|_\infty\lesssim m^{-2}$ by \eqref{e:phi-n}, 
we can choose an integer $J$ large enough so that for every $n\geq 0$ we have 
$$(1-\epsilon) \hat\1_n \leq \1_n\leq (1+\epsilon) \hat \1_n.$$
This implies
$${1-\epsilon\over 1+\epsilon} \, \hat\1_n^*\leq \1_n^* \leq {1+\epsilon\over 1-\epsilon} \, \hat\1_n^*.$$
Therefore, $|\1_n^*-\hat\1^*_n|$ is bounded uniformly by a constant times
$\epsilon$. By Lemma \ref{l:eq-1-n}, the sequence $(\hat\1^*_n)$ is
equicontinuous. We easily deduce that the sequence $(\1^*_n)$ is equicontinuous as well.
\endproof

\subsection{Proof of Theorem \ref{t:main-lam}} \label{ss:proof-main-lam} 
We first define the scaling ratio $\lambda$. By definition of $\rho^+_n$, we
easily see that the sequence $(\rho^+_n)$
is sub-multiplicative, that is, 
$\rho^+_{n+m}\leq \rho^+_m \rho^+_n$ for all $m,n\geq 0$. It follows that
the first limit in the following line exists
\[
\lam := \lim_{n\to \infty} \pa{\rho^+_n}^{1/n} = \lim_{n\to \infty}\pa{\rho^-_n}^{1/n} ,
\]
where the last identity is due to the fact that $(\theta_n)$ is bounded, see Lemma \ref{l:theta-bound}. 
We have the following lemma.

\begin{lemma} \label{l:bound-omega-lam}
The sequences $(\lam^{-n} \rho^+_n)$ and $(\lam^{-n}\rho^-_n)$  are
both 
  bounded above and below by positive constants. In particular, the sequence
$\big(\lam^{-n} \1_{n}\big)$ is uniformly bounded and equicontinuous.
\end{lemma}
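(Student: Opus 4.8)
The plan is to exploit the submultiplicativity of $(\rho^+_n)$ together with a matching supermultiplicativity-type estimate for $(\rho^-_n)$, and then transfer these bounds to the functions $\1_n$ via Proposition \ref{p:theta-n-1-n}. First I would record the two auxiliary submultiplicativity relations. Since
\[
\1_{n+m}(y) = \Ll^m(\1_n)(y) = \sum_{f^m(x)=y} e^{\phi(x)+\cdots+\phi(f^{m-1}(x))}\,\1_n(x),
\]
and $\Ll^m(\1)(y) = \1_m(y) \le \rho^+_m$ while $\1_n(x)\le\rho^+_n$, one gets $\rho^+_{n+m}\le\rho^+_m\rho^+_n$, so $(\log\rho^+_n)$ is subadditive and $\lam = \lim(\rho^+_n)^{1/n} = \inf_n(\rho^+_n)^{1/n}$ exists by Fekete's lemma; in particular $\rho^+_n\ge\lam^n$ for all $n$, which gives the lower bound $\lam^{-n}\rho^+_n\ge 1$. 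Symmetrically, bounding $\1_n(x)\ge\rho^-_n$ in the same identity yields $\rho^-_{n+m}\ge\rho^-_m\,\Ll^m(\1)(y)/\!\sup(\cdot)$; more cleanly, from $\1_{n+m}\ge\rho^-_n\,\1_m$ one deduces $\rho^-_{n+m}\ge\rho^-_n\rho^-_m$, so $(\rho^-_n)$ is \emph{super}multiplicative and hence $\rho^-_n\le\lam^n$ for all $n$ as well, which (using $\lim(\rho^-_n)^{1/n}=\lam$, itself a consequence of $\theta_n$ being bounded) gives $\lam^{-n}\rho^-_n\le 1$.

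Next I would close the remaining two inequalities using the boundedness of $\theta_n=\rho^+_n/\rho^-_n$ from Lemma \ref{l:theta-bound}: write $C:=\sup_n\theta_n<\infty$. Then $\lam^{-n}\rho^+_n = \theta_n\cdot\lam^{-n}\rho^-_n\le C\cdot 1 = C$, giving the upper bound for $(\lam^{-n}\rho^+_n)$; and $\lam^{-n}\rho^-_n = \theta_n^{-1}\cdot\lam^{-n}\rho^+_n\ge C^{-1}\cdot 1 = C^{-1}$, giving the lower bound for $(\lam^{-n}\rho^-_n)$. Thus both sequences lie in the fixed compact interval $[C^{-1},C]\subset(0,\infty)$.

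Finally, for the ``in particular'' clause: recall $\1^*_n = (\rho^-_n)^{-1}\1_n$, so $\1_n = \rho^-_n\,\1^*_n$ and hence $\lam^{-n}\1_n = (\lam^{-n}\rho^-_n)\,\1^*_n$. By the above, the scalar factors $\lam^{-n}\rho^-_n$ are bounded above and below by positive constants, and by Proposition \ref{p:theta-n-1-n} the functions $\1^*_n$ are uniformly bounded and equicontinuous; a product of a bounded sequence of scalars with a uniformly bounded equicontinuous sequence of functions is again uniformly bounded and equicontinuous (equicontinuity is preserved since $|\lam^{-m}\1_m(x)-\lam^{-m}\1_m(y)| \le (\lam^{-m}\rho^-_m)\,|\1^*_m(x)-\1^*_m(y)| \le C\,|\1^*_m(x)-\1^*_m(y)|$). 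This yields the claim. I do not expect any genuine obstacle here: the only mild point of care is making sure the elementary submultiplicativity/supermultiplicativity arguments give $\rho^+_n\ge\lam^n$ and $\rho^-_n\le\lam^n$ in the right directions, and then everything else is a bookkeeping consequence of Lemma \ref{l:theta-bound} and Proposition \ref{p:theta-n-1-n}.
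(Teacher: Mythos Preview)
Your proposal is correct and follows essentially the same approach as the paper: submultiplicativity of $(\rho^+_n)$ gives $\lam^{-n}\rho^+_n\ge 1$, supermultiplicativity of $(\rho^-_n)$ gives $\lam^{-n}\rho^-_n\le 1$, and the boundedness of $\theta_n$ (Lemma~\ref{l:theta-bound}) closes the remaining two inequalities, with the equicontinuity of $(\lam^{-n}\1_n)$ following from Proposition~\ref{p:theta-n-1-n} via $\lam^{-n}\1_n=(\lam^{-n}\rho^-_n)\,\1^*_n$. The paper's proof is organized identically, just more tersely.
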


\begin{proof}
It is clear that the second assertion is a consequence of the first one and
Proposition \ref{p:theta-n-1-n}.  We prove now the first assertion.
Since  the sequence $\rho^+_n$ is sub-multiplicative, it is well-known
that $\inf_n (\rho^+_n)^{1/n}$ is equal to $\lambda$. Hence, we have
$\lambda^{-n}\rho^+_n\geq 1$. Since $\theta_n$ is bounded, we have $\rho_n^+\lesssim \rho_n^-$.
It follows that both $\lambda^{-n}\rho^\pm_n$ are bounded from below by positive constants.
Similarly, the sequence  $\rho^-_n$ is super-multiplicative, i.e., $\rho^-_{n+m}\geq \rho^-_m \rho^-_n$
for all $m,n\geq 0$, and we deduce that  that both $\lambda^{-n}\rho^\pm_n$ are bounded from above
 by positive constants. The lemma follows.
\end{proof}

We can extend the above result to all continuous test functions.

 \begin{lemma} \label{l:Ln-g-equi}
 Let $\Fc$ be a uniformly bounded and equicontinuous family of real-valued functions on $\P^k$. Then the family
 $$\Fc_\N:=\{\lam^{-n}\Ll^n (g) \ : \ g\in\Fc, n\geq 0\}$$
 is also uniformly bounded and equicontinuous.
 \end{lemma}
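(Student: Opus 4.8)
The plan is to reduce the general statement to the case $g=\1$ already treated in Proposition \ref{p:theta-n-1-n} and Lemma \ref{l:bound-omega-lam}, by exploiting the linearity and positivity of $\Ll$. First I would note the basic monotonicity: if $g_1 \le g_2$ pointwise then $\Ll^n(g_1) \le \Ll^n(g_2)$, since $\Ll$ has non-negative coefficients $e^{\phi(x)}$. Since $\Fc$ is uniformly bounded, there is a constant $M>0$ with $|g|\le M$ for all $g\in\Fc$, hence $-M\1 \le g \le M\1$ and therefore $|\lam^{-n}\Ll^n(g)| \le M\,\lam^{-n}\Ll^n(\1) = M\lam^{-n}\1_n$. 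By Lemma \ref{l:bound-omega-lam} the sequence $(\lam^{-n}\1_n)$ is uniformly bounded, so $\Fc_\N$ is uniformly bounded.

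For equicontinuity, the main point is to control oscillations of $\lam^{-n}\Ll^n(g)$ on small balls. Fix $\eps>0$. By equicontinuity of $\Fc$ there is $r_0>0$ such that $\Omega_{\B_{\P^k}(a,r)}(g) \le \eps$ for every $a\in\P^k$, every $r\le r_0$, and every $g\in\Fc$. Given such a ball $B=\B_{\P^k}(a,r)$ and $g\in\Fc$, I would write $g = g^{(a,r)} + h$ on a neighbourhood of the relevant preimages, where $g^{(a,r)}$ is a suitable "constant-on-$B$" comparison — concretely, pick the constant $c := g(a)$ and split $g = c\1 + (g-c\1)$. Then $\lam^{-n}\Ll^n(g) = c\,\lam^{-n}\1_n + \lam^{-n}\Ll^n(g-c\1)$. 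The first term has oscillation $\le |c|\,\Omega(\lam^{-n}\1_n)$, which is small on small balls by the equicontinuity of $(\lam^{-n}\1_n)$ from Lemma \ref{l:bound-omega-lam}; but $|c|$ ranges over a bounded set (as $\Fc$ is uniformly bounded), so this is uniformly controlled. However $g-c\1$ is only small near $a$, not globally, so this naive splitting does not immediately bound the second term.

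The hard part, and the step I expect to be the genuine obstacle, is handling the second term: one cannot simply say $\|g-c\1\|_\infty$ is small. Instead I would argue locally using the structure of $\Ll^n$ as a sum over inverse branches together with property {\bf (A2)} from the proof of Lemma \ref{l:ddc-dyn-log}. Namely, for $x,y$ with $\dist(x,y)\le r$ small, write $f^{-n}(x)=\{x_1,\dots,x_{d^{kn}}\}$ and $f^{-n}(y)=\{y_1,\dots,y_{d^{kn}}\}$ with $\dist(x_j,y_j)\le c_\kappa \dist(x,y)^{1/\kappa^n}$. Then
\[
\lam^{-n}\big(\Ll^n g(x) - \Ll^n g(y)\big) = \lam^{-n}\sum_j w_j(x)\,g(x_j) - \lam^{-n}\sum_j w_j(y)\,g(y_j),
\]
where $w_j$ denotes the weight $e^{\phi(x_j)+\cdots+\phi(f^{n-1}(x_j))}$. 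I would bound this by splitting into the contribution from the variation of $g$ (controlled by $|g(x_j)-g(y_j)| \le m_{\P^k}(g, c_\kappa\dist(x,y)^{1/\kappa^n})$, which is small once $\dist(x,y)$ is small enough depending on $n$, using equicontinuity of $\Fc$) and the contribution from the variation of the weights $w_j$ (controlled using the $\log^q$-continuity of $\phi$ and the already-established equicontinuity of $\lam^{-n}\1_n$, absorbing the sum $\sum_j |w_j(x)-w_j(y)|$ into an oscillation estimate for $\lam^{-n}\Ll^n\1$). Since $\Fc_\N$ is already known to be uniformly bounded, a standard diagonal/compactness argument then upgrades this "$n$-dependent modulus of continuity" into genuine equicontinuity: for each fixed $n$ the map $g\mapsto \lam^{-n}\Ll^n g$ sends the equicontinuous family $\Fc$ to an equicontinuous family (a finite composition of $\Ll$, which preserves equicontinuity by the same inverse-branch estimate), while for $n$ large the bound $|\lam^{-n}\Ll^n g|\le M\lam^{-n}\1_n$ together with equicontinuity of $(\lam^{-n}\1_n)$ and the near-$a$ splitting above gives uniformity in $n$. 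Combining the finitely many "small $n$" families with the uniform "large $n$" estimate yields equicontinuity of $\Fc_\N$.
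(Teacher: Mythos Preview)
Your uniform-boundedness argument is correct and is exactly what the paper does. The gap is in the equicontinuity argument.

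The inverse-branch route via property {\bf (A2)} gives only an $n$-dependent modulus: preimages satisfy $\dist(x_j,y_j)\lesssim \dist(x,y)^{1/\kappa^n}$, so both the variation of $g$ and that of the weights are small only once $\dist(x,y)$ is \emph{exponentially} small in $n$. Your mechanism for upgrading this to uniformity in $n$ does not work. First, the ``variation of weights'' term you want is $\lam^{-n}\sum_j |w_j(x)-w_j(y)|$, which is \emph{not} the oscillation of $\lam^{-n}\1_n$: the latter is $\lam^{-n}\bigl|\sum_j w_j(x)-\sum_j w_j(y)\bigr|$, a signed sum with cancellations, so equicontinuity of $(\lam^{-n}\1_n)$ does not control the absolute-value sum. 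If instead you bound each $|w_j(x)-w_j(y)|$ directly via the $\log^q$-continuity of $\phi$ along the orbit, the $n$ terms of size roughly $\kappa^{(n-l)q}|\log\dist(x,y)|^{-q}$ sum to $\approx\kappa^{nq}|\log\dist(x,y)|^{-q}$, which blows up with $n$ for fixed distance. Second, the bound $|\lam^{-n}\Ll^n g|\le M\lam^{-n}\1_n$ is a sup-norm bound only; it says nothing about oscillations, and the ``near-$a$ splitting'' $g=c\1+(g-c\1)$ you already correctly diagnosed as insufficient does not become sufficient for large $n$. So the final step ``combine finitely many small-$n$ families with a uniform large-$n$ estimate'' has no valid large-$n$ estimate to use.

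The paper takes a completely different route. One first approximates every $g\in\Fc$ by a $\Cc^2$ function $g'$ with $\|g-g'\|_\infty\le\epsilon$ and $\|g'\|_{\Cc^2}$ bounded (depending only on $\epsilon$ and the common modulus of $\Fc$); since $|\lam^{-n}\Ll^n(g-g')|\le\epsilon\lam^{-n}\rho_n^+\lesssim\epsilon$, it suffices to treat such $g'$. One then replaces $\Ll^n$ by the approximate operator $\hat\Ll_n$ (cf.\ \eqref{e:l-hat} with $N=0$, $J$ large), and estimates $|dd^c\,\lam^{-n}\hat\Ll_n(g')|$ via the Gromov-graph construction exactly as in Proposition~\ref{p:laplace-1-n} and Lemma~\ref{l:ddch}, the extra factor $g'(x_0)$ being absorbed by Cauchy--Schwarz since $\|g'\|_{\Cc^2}$ is bounded. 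This yields a uniform bound $|dd^c\,\lam^{-n}\hat\Ll_n(g')|\lesssim\sum_{m\ge 1}\eta(m)e^{m\Omega(\phi)}d^{-m}\omega_m$, independent of $n$ and $g'$, and Lemma~\ref{l:ddc-dyn-log} then gives a uniform $\|\cdot\|_{\log^p}$ bound and hence equicontinuity. The key point is that this global $dd^c$-estimate is summable in $m$ and replaces the branch-by-branch distortion control, which is precisely where your argument degrades.
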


\begin{proof}
By Lemma \ref{l:bound-omega-lam}, the family
$\Fc_\N$ is uniformly bounded. 
We prove now that it is\break equicontinuous. Given any constant $\epsilon>0$, using
a convolution, we can find for every $g\in\Fc$
 a smooth function $g'$ such that  
$\|g-g'\|_\infty\leq \epsilon$ and $\|g'\|_{\Cc^2}$
is
 bounded by a constant depending on $\epsilon$.
 Denote by $\Fc'$ the family of these $g'$.
Observe that
$$|\lambda^{-n}\Ll^n(g)-\lambda^{-n}\Ll^n(g')|=|\lambda^{-n}\Ll^n(g-g')|
\leq \epsilon \lambda^{-n}\1_n\leq \epsilon \lam^{-n} \rho^+_n$$
and the last expression is bounded by a constant times $\epsilon$. Therefore,
in order to prove the lemma, it is enough to show that  the 
family $\Fc'_\N$, defined in a similar way as for $\Fc_\N$, is equicontinuous.
For simplicity, we replace $\Fc$ by $\Fc'$ and assume that $\|g\|_{\Cc^2}$ 
is bounded by a constant for $g\in\Fc$.
The constants involved in the computation below do not depend on $g\in\Fc$.

We continue to use the notation introduced above. Consider an arbitrary constant $\epsilon>0$. 
Take $N=0$ and choose $J$ large enough depending on $\eps$.
From the definitions of $\Ll$ and $\hat\Ll_n$ (see \eqref{e:l-hat})
and the fact that $\|\phi-\phi_m\|_\infty\lesssim m^{-2}$
we obtain that 
$$|\lambda^{-n}\Ll^n(g)(x)-\lambda^{-n}\hat\Ll_n(g)(x)|\leq \epsilon \lambda^{-n}\sum_{f^n (x) =y } 
e^{ \phi (x) + \phi (f(x) ) + \cdots + \phi (f^{n-1} (x)) }  |g(x)|.$$
This and Lemma \ref{l:bound-omega-lam} imply that
$$\|\lambda^{-n}\Ll^n(g)-\lambda^{-n}\hat\Ll_n(g)\|_\infty
\leq \epsilon \lambda^{-n}\rho_n^+\|g\|_\infty\lesssim \epsilon.$$
So, in order to prove that the family $\lambda^{-n}\Ll^n(g)$ is
equicontinuous, it is enough to show the same property for the family 
$\lambda^{-n}\hat\Ll_n(g)$. 

We will use the same idea as in Proposition \ref{p:laplace-1-n} and Lemma \ref{l:ddch}.
Instead of the function $\h$, we need to consider the following slightly different function (recall that $N=0$)
\[
\H (x_0, \dots, x_n):= e^{\phi_{n+J} (x_0)+\phi_{n+J-1}(x_1) + \dots + \phi_{J+1} (x_{n-1})} g(x_0)=\h(x_0,\ldots,x_n)g(x_0).
\]
We have 
$$i\ddbar \H= (i\ddbar \h)g(x_0) +\h (i\ddbar g(x_0))+i\partial \h\wedge \dbar g(x_0) -i\dbar \h\wedge \partial g(x_0).$$
Applying Cauchy-Schwarz's inequality to the last two terms, and since $g$ has a bounded $\Cc^2$ norm, we obtain
\begin{eqnarray*}
|i\ddbar \H| &\leq&  |(i\ddbar \h)g(x_0)| + |\h (i\ddbar g(x_0))| 
+ i\h^{-1}\partial \h\wedge \dbar \h +i\h \partial g(x_0) \wedge \dbar g(x_0) \\
&\lesssim & |i\ddbar \h| + \h\omega_\FS(x_0) + i\h^{-1}\partial \h\wedge \dbar \h +\h \omega_\FS(x_0) \\
&\lesssim & |i\ddbar \h| + \h\omega_\FS(x_0) + i\h^{-1}\partial \h\wedge \dbar \h.
\end{eqnarray*}

We claim that the last sum satisfies
$$ |i\ddbar \h| + \h\omega_\FS(x_0) + i\h^{-1}\partial \h\wedge \dbar \h \lesssim \h \sum_{m=0}^{n-1} \eta(n-m)\omega^{(m)}.$$
Lemma \ref{l:ddch} shows that the first term $ |i\ddbar \h|$ of the LHS
is bounded by the RHS. The second term clearly satisfies the same property (consider $m=0$ in the above sum).
For the last term, by Cauchy-Schwarz's inequality and using
a computation as in the proof of Lemma \ref{l:ddch}, we have (recall that  $N=0$)
$$i\h^{-1}\partial \h\wedge \dbar \h = 
\h \sum_{m,m'=0}^{n-1} i\partial \phi_{n+J-m}(x_m)\wedge \dbar \phi_{n+J-m'}(x_{m'})
\lesssim  \h \sum_{m=0}^{n-1} \eta(n-m)\omega^{(m)}.$$
This implies the claim and gives a
bound for $|i\ddbar \H|$.

Since $\hat\Ll_n(g)=(\pi_n)_*(\H)$, we obtain as in the proof of Proposition \ref{p:laplace-1-n} that
$$|\ddc \lambda^{-n} \hat\Ll_n(g)|
 \lesssim   \lambda^{-n} \sum_{m=1}^{n} \eta(m) e^{m\max \phi} \hat \rho^+_{n-m} d^{(k-1)m} \omega_{m}.$$
By Lemmas \ref{l:hat-theta-n} and \ref{l:bound-omega-lam}
we have
$\hat\rho_{n-m}^\pm\lesssim \rho_{n-m}^\pm\lesssim \lambda^{n-m}$. Therefore, we obtain
$$|\ddc \lambda^{-n} \hat\Ll_n(g)| \lesssim   \sum_{m=1}^{n} \eta(m) e^{m\max \phi} \lam^{-m} d^{(k-1)m} \omega_{m}.$$
Finally, since $\lam\geq d^k e^{\min\phi}$ by definition of $\lambda$,
 the last estimate implies that
$$|\ddc \lambda^{-n} \hat\Ll_n(g)| \lesssim   \sum_{m=1}^{n} \eta(m) e^{m\Omega(\phi)}  d^{-m} \omega_{m}.$$
Lemma \ref{l:ddc-dyn-log} and the fact that $d>e^{\Omega(\phi)}$ imply the result.
\end{proof}

We now construct the density function $\rho$ on $\P^k$. Recall that the sequence $\lambda^{-n}\1_n$ is uniformly bounded 
and equicontinuous. Therefore, the Cesaro sums
\[
\widetilde\1_n:=\frac{1}{n} \sum_{j=0}^{n-1}
\lam^{-j} \1_j
\]
also form
a uniformly bounded and equicontinuous sequence of functions. 
It follows that there is a subsequence of $\widetilde\1_n$ which converges uniformly 
to a continuous function $\rho$. Observe that $\rho\geq \inf_n \lam^{-n}\rho_n^-$.
Hence, by Lemma \ref{l:bound-omega-lam}, the function $\rho$ is strictly positive. A direct computation gives
$$\lambda^{-1}\Ll(\widetilde\1_n)-\widetilde\1_n= {1\over n} (\lambda^{-n}\1_n - \1_0).$$
Since $\lam^{-n}\1_n$ is bounded uniformly in $n$,
 the last expression tends uniformly to 0 when $n$
tends to infinity. We then deduce from the definition of $\rho$ that $\lam^{-1}\Ll(\rho)=\rho$.

\proof[End of the proof of Theorem \ref{t:main-lam}]
Observe that we only need to show that $\lam^{-n}\Ll^n(g)$ converges to $c_g \rho$ 
for some constant $c_g$. The remaining part of the theorem is then clear.
Let $\Gc$ denote the family of all limit
 functions of subsequences of $\lam^{-n}\Ll^n(g)$. By Lemma \ref{l:Ln-g-equi}, the 
 sequence $\lam^{-n}\Ll^n(g)$ is uniformly bounded and equicontinuous.
 Therefore, by Arzel\`a-Ascoli theorem, $\Gc$ is a uniformly bounded and equicontinuous family of functions which is compact for the uniform topology.
Observe also that $\Gc$ is invariant under the action of $\lambda^{-1}\Ll$.
Define
$$M:=\max\{ l(a)/\rho(a) \ \colon \ l\in\Gc, a\in\P^k \}.$$
We first prove the following properties.

\medskip\noindent
{\bf Claim 1.} We have $\max_{\P^k} (l/\rho)=M$ for every $l\in \Gc$.

\medskip

Assume by contradiction that there is a sequence $\lam^{-n_j}\Ll^{n_j}(g)$ which
converges uniformly to a function $l\in\Gc$ such that $l\leq (M-2\epsilon)\rho$ for
some constant $\epsilon>0$. Then, for $j$ large enough, we have 
$\lam^{-n_j}\Ll^{n_j}(g)\leq (M-\epsilon)\rho$. Fix such an index $j$. For $n>n_j$ we have 
$$\lam^{-n}\Ll^n(g)=\lambda^{-n+n_j} \Ll^{n-n_j} (\lam^{-n_j}\Ll^{n_j}(g)) 
\leq (M-\epsilon)\lambda^{-n+n_j} \Ll^{n-n_j}(\rho)=(M-\epsilon)\rho.$$
Since this is true for every $n>n_j$, we get a contradiction with the definition
of $M$. This ends the proof of Claim 1.

\medskip\noindent
{\bf Claim 2.} We have $l/\rho=M$ on the small Julia set $\supp(\mu)$ for every $l\in\Gc$.

\medskip

Consider an arbitrary function $l\in\Gc$ and define
 $l_n:=\lambda^{-n}\Ll^n(l) \in \Gc$. By Claim 1, there is a point $a_n\in\P^k$ such
that $l_n(a_n)=M\rho(a_n)$. 
By definition of $M$, we have $l\leq M\rho$ and hence 
$$M\rho(a_n)=l_n(a_n)=\lambda^{-n}\Ll^n(l)(a_n)\leq \lambda^{-n}\Ll^n(M\rho)(a_n)=M\rho(a_n).$$ 
So the inequality in the last line 
is actually an equality. This and the definition of $\Ll$ imply that $l/\rho=M$ on $f^{-n}(a_n)$. 
Observe that when $n$ tends to infinity, the limit of $f^{-n}(a_n)$ contains
$\supp(\mu)$,
 see, e.g., \cite[Cor.\,1.4]{dinh2010equidistribution}.
By continuity, we obtain $l/\rho=M$ on
$\supp(\mu)$.
 This ends the proof of Claim 2. 

\smallskip

Applying the above claims to the function $-g$ instead of $g$, we obtain that $l/\rho$ is equal
on $\supp(\mu)$ to
$\min_{\P^k} (l/\rho)$. 
 We can
now conclude that $l=M\rho$ on $\P^k$
 for every $l\in\Gc$. Define $c_g:=M$. We obtain that $\lambda^{-n}\Ll^n(g)$ converges uniformly to $c_g\rho$. 
This completes the proof of the theorem.
\endproof

\section{Properties of equilibrium states}
\label{s:new-properties}

 In this section we conclude the proof of Theorem
\ref{t:main}.
In Sections
\ref{s:equidistributions-L}  and \ref{s:proof-t:main-further},
we deduce 
the main properties of the equilibrium states in
Theorem \ref{t:main} from
Theorem \ref{t:main-lam}.
 In Section \ref{s:pern} we prove the equidistribution of 
 repelling
 periodic points, which concludes the proof of Theorem \ref{t:main}.

\subsection{Equidistribution of preimages and mixing properties}\label{s:equidistributions-L}
We have seen that the operator $\Ll$ acts on the space of continuous
functions $g \colon \P^k\to \R$. 
It is also positive, i.e., $\Ll(g)\geq 0$ when $g\geq 0$. 
Therefore, $\Ll$ induces by duality a linear operator $\Ll^*$ acting on
the space of measures and preserving the cone of positive measures. 

\begin{proposition} \label{p:cv-m-phi}
Under the assumptions of Theorem \ref{t:main}, there exists a unique
conformal measure associated with $\phi$, that is, there exists a unique
probability measure $m_\phi$ which is an eigenvector of $\Ll^*$. We also
have $\Ll^*(m_\phi)=\lambda m_\phi$, $\supp(m_\phi)=\supp(\mu)$, 
and if $\nu$ is a positive measure, $\lambda^{-n}(\Ll^{n})^{*}(\nu)$
converges to $\langle\nu,\rho\rangle m_\phi$ when $n$ tends to infinity.
Moreover, if $\Fc$ is a uniformly bounded and equicontinuous family
of functions on $\P^k$, then 
$\lambda^{-n}\Ll^n(g)-c_g\rho$ converges to $0$ when $n$ goes
to infinity, uniformly on $g\in\Fc$,
 where $c_g := \sca{m_\phi, g}$.
\end{proposition}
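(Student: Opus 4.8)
The plan is to derive everything from the convergence statement in Theorem \ref{t:main-lam}, namely that $\lambda^{-n}\Ll^n(g)\to c_g\rho$ uniformly for every continuous $g$, together with the fact that $\lambda^{-1}\Ll(\rho)=\rho$ and $\rho>0$. First I would observe that for each continuous $g$ the constant $c_g$ depends linearly and continuously on $g$ (indeed $|c_g|\leq \limsup_n \lambda^{-n}\|\Ll^n g\|_\infty/\inf\rho$, and $\lambda^{-n}\Ll^n$ is bounded uniformly in $n$ by Lemma \ref{l:bound-omega-lam}), so $g\mapsto c_g$ is a bounded positive linear functional on $C^0(\P^k)$; by Riesz representation it is given by integration against a positive measure $m_\phi$, which has mass $c_{\1}=1$ after normalizing $\rho$ so that $\langle m_\phi,\rho\rangle=1$ (here one uses that $c_{\1}$ and the normalization of $\rho$ are compatible, or simply rescales $\rho$). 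Thus $\lambda^{-n}\Ll^n(g)\to \langle m_\phi,g\rangle\,\rho$ uniformly.

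Next I would establish the duality consequences. For a positive measure $\nu$ and continuous $g$,
\[
\langle \lambda^{-n}(\Ll^n)^*\nu, g\rangle = \langle \nu, \lambda^{-n}\Ll^n g\rangle \to \langle\nu,\rho\rangle\,\langle m_\phi,g\rangle,
\]
which gives the weak-$*$ convergence $\lambda^{-n}(\Ll^n)^*\nu\to\langle\nu,\rho\rangle\,m_\phi$. Applying this with $\nu=m_\phi$ and using $\langle m_\phi,\rho\rangle=1$ yields $\lambda^{-n}(\Ll^n)^*m_\phi\to m_\phi$; but on the other hand $(\Ll^*)$ applied to the identity $\lambda^{-1}\Ll\rho=\rho$ is not what we want — instead, to see $\Ll^*m_\phi=\lambda m_\phi$, test against $g$: $\langle \Ll^* m_\phi,g\rangle=\langle m_\phi,\Ll g\rangle=\lim_n \lambda^{-n}\langle (\Ll^n)^* m_\phi, \Ll g\rangle \cdot \lambda^n / \lambda^n$; more directly, $\langle m_\phi, \Ll g\rangle = \lim_n \lambda^{-n}\langle \Ll^{n}\1, \Ll g\rangle$ is awkward, so instead I would argue: from $\lambda^{-n}\Ll^n(\Ll g)\to \langle m_\phi,\Ll g\rangle\rho$ and $\lambda^{-n}\Ll^n(\Ll g)=\lambda\cdot\lambda^{-(n+1)}\Ll^{n+1}(g)\to\lambda\langle m_\phi,g\rangle\rho$, whence $\langle m_\phi,\Ll g\rangle=\lambda\langle m_\phi,g\rangle$, i.e. $\Ll^* m_\phi=\lambda m_\phi$. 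Uniqueness of the conformal measure follows because any probability eigenvector $m'$ of $\Ll^*$ with eigenvalue $\lambda'$ must satisfy, testing against $\rho$, $\lambda'\langle m',\rho\rangle = \langle m',\Ll\rho\rangle=\lambda\langle m',\rho\rangle$ so $\lambda'=\lambda$ (using $\langle m',\rho\rangle>0$ since $\rho>0$), and then $\langle m',g\rangle = \lambda^{-n}\langle m',\Ll^n g\rangle = \langle m', \lambda^{-n}\Ll^n g\rangle\to \langle m',\rho\rangle\langle m_\phi,g\rangle$, forcing $m'=m_\phi$ after checking $\langle m',\rho\rangle=1$.

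For the support statement, $\supp(m_\phi)\subseteq\supp(\mu)$ follows because $\lambda^{-n}\Ll^n(g)$ depends, via the definition of $\Ll$, only on the values of $g$ on $f^{-n}$-preimages, whose union clusters onto $\supp(\mu)$ (as in Claim 2 of the proof of Theorem \ref{t:main-lam}); more concretely, if $g\geq 0$ is supported away from $\supp(\mu)$ then one shows $\Ll^n g$ is eventually small in the relevant region, giving $\langle m_\phi,g\rangle=0$. For the reverse inclusion, invariance $\Ll^* m_\phi=\lambda m_\phi$ together with positivity and the total invariance of $\supp(\mu)$ under $f^{-1}$ forces $\supp(m_\phi)$ to be a nonempty closed $f^{-1}$-invariant set meeting $\supp(\mu)$, and since $\supp(\mu)$ carries no proper closed totally invariant subset intersecting it (Assumption \textbf{(A)}), we get $\supp(m_\phi)=\supp(\mu)$. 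Finally, the uniform convergence $\lambda^{-n}\Ll^n(g)-c_g\rho\to 0$ uniformly over an equicontinuous bounded family $\Fc$ is a standard Arzel\`a--Ascoli argument: the family $\Fc_\N=\{\lambda^{-n}\Ll^n g:g\in\Fc,\ n\geq 0\}$ is uniformly bounded and equicontinuous by Lemma \ref{l:Ln-g-equi}, so any sequence has uniformly convergent subsequences; each limit must equal $c_{g}\rho$ for the corresponding $g$ by Theorem \ref{t:main-lam} and the continuity of $g\mapsto c_g$, and a diagonal/compactness argument upgrades this to uniformity over $\Fc$.

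The main obstacle I expect is the support identification $\supp(m_\phi)=\supp(\mu)$: the inclusion $\supp(m_\phi)\subseteq\supp(\mu)$ needs a quantitative version of the fact that $\Ll^n g$ ``forgets'' $g$ outside the small Julia set, and the reverse inclusion needs the minimality-type property of $\supp(\mu)$ under $f^{-1}$ that is guaranteed by Assumption \textbf{(A)} (emptiness of the exceptional set on $\supp(\mu)$). Everything else is soft functional-analytic manipulation of the convergence already provided by Theorem \ref{t:main-lam}.
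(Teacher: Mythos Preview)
Your overall strategy matches the paper's: use Theorem \ref{t:main-lam} to identify $c_g=\langle m_\phi,g\rangle$, read off $\Ll^*m_\phi=\lambda m_\phi$ from $c_{\Ll g}=\lambda c_g$, obtain uniqueness and the convergence of $\lambda^{-n}(\Ll^n)^*\nu$ by duality. These parts are fine and essentially identical to the paper.

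Two points deserve comment.

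\medskip
\noindent\textbf{Support.} Your sketch here is imprecise on both inclusions. For $\supp(m_\phi)\subseteq\supp(\mu)$, the phrase ``$\Ll^n g$ is eventually small in the relevant region'' is not the right mechanism. The paper uses the \emph{total invariance} of $\supp(\mu)$: if $g$ vanishes on $\supp(\mu)$ and $y\in\supp(\mu)$, then every $x\in f^{-n}(y)$ lies in $\supp(\mu)$, so $\Ll^n g(y)=0$ for all $n$; passing to the limit gives $c_g\rho(y)=0$ on $\supp(\mu)$, hence $c_g=0$. For $\supp(m_\phi)\supseteq\supp(\mu)$, your appeal to a minimality property is on the right track but mis-stated. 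The paper argues by contradiction: if $g\geq 0$ is positive on an open set $U$ meeting $\supp(\mu)$ and $\langle m_\phi,g\rangle=0$, then invariance gives $\langle m_\phi,\Ll^n g\rangle=0$, while $\Ll^n g>0$ on $f^n(U)$; hence $m_\phi(\cup_n f^n(U))=0$. But under Assumption \textbf{(A)} the equidistribution $d^{-kn}(f^n)^*\delta_x\to\mu$ holds for \emph{every} $x$, so $f^{-n}(x)$ meets $U$ for some $n$, i.e.\ $\cup_n f^n(U)=\P^k$, a contradiction. (Your alternative --- showing $\supp(m_\phi)$ is a nonempty closed set with $f^{-1}(\supp m_\phi)=\supp m_\phi$, then observing that any such set contains $\supp(\mu)$ by the same equidistribution --- also works once phrased correctly.)

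\medskip
\noindent\textbf{Uniformity over $\Fc$.} Here your argument is genuinely different from, and simpler than, the paper's. The paper reruns the maximum-principle/Claim~2 machinery from the end of the proof of Theorem \ref{t:main-lam}: it forms the limit family $\Fc_\infty$, builds ``negative iterates'' $l_{-n}$ with $l=\lambda^{-n}\Ll^n(l_{-n})$, and shows $M:=\max_{l\in\Fc_\infty}\max(l/\rho)=0$ via $f^{-n}(a)\to\supp(\mu)$. Your route --- if $\|\lambda^{-n_j}\Ll^{n_j}g_j-c_{g_j}\rho\|_\infty\geq\epsilon$, pass to $g_j\to g$ by Arzel\`a--Ascoli, use $\sup_n\|\lambda^{-n}\Ll^n\|_\infty<\infty$ (Lemma \ref{l:bound-omega-lam}) to get $\lambda^{-n_j}\Ll^{n_j}g_j\to c_g\rho$, and use continuity of $g\mapsto c_g$ to get $c_{g_j}\to c_g$, contradiction --- is correct and more economical. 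The paper's argument, on the other hand, is self-contained in the sense that it does not reuse the convergence of Theorem \ref{t:main-lam} for a single limit function $g$, which is why the authors may have preferred it (and the same scheme is reused later in Lemma \ref{l:dependence-bis} where $\phi$ also varies and your shortcut would not directly apply).
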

\proof
For any probability
measure $m_\phi$ as in the first assertion, 
there is a constant $\lambda'>0$ such that $\Ll^* (m_\phi)=\lambda' m_\phi$. It follows that,
for every continuous function $g$,
$$\langle m_\phi,g\rangle = \lim_{n\to\infty} \langle \lambda'^{-n}(\Ll^{n})^* (m_\phi),g\rangle 
=  \lim_{n\to\infty} \langle m_\phi,\lambda'^{-n}\Ll^n(g)\rangle.$$
We necessarily have $\lambda'=\lambda$ because we know 
from the end of the proof of Theorem \ref{t:main-lam} 
that $\lambda^{-n}\Ll^n(g)$ converges uniformly to $c_g\rho$ and $c_g$ is not always 0. 
We conclude that $\langle m_\phi,g\rangle =c_g\langle m_\phi,\rho\rangle$.
Since $\langle m_\phi,g\rangle =c_g=1$ when $g=\1$ (because $m_\phi$
is a probability measure) we deduce that $\langle m_\phi,\rho\rangle=1$
and hence $\langle m_\phi,g\rangle=c_g$ for every continuous function $g$. 
This gives the uniqueness of $m_\phi$.

Consider now an arbitrary probability measure $\nu$ on $\P^k$. We have
$$\langle \lambda^{-n}(\Ll^{n})^*(\nu),g\rangle 
=\langle \nu, \lam^{-n} \Ll^n(g)\rangle \rightarrow \langle\nu,c_g\rho\rangle 
=\langle\nu,\rho\rangle \langle m_\phi,g\rangle.$$
It follows that $\lambda^{-n}(\Ll^{n})^*(\nu)$ converges to $\langle\nu,\rho\rangle m_\phi$.
If $\nu$ is
supported by $\supp(\mu)$ and $g$ vanishes on $\supp(\mu)$, by 
definition of $\Ll$, the function $\Ll^n(g)$ also vanishes on $\supp(\mu)$ 
and the last computation implies that $\langle m_\phi,g\rangle=0$. 
Equivalently, the measure $m_\phi$ is supported by $\supp(\mu)$. 

In order to show that $\supp(m_\phi)=\supp(\mu)$, we assume by 
contradiction that there is a continuous function $g\geq 0$ on $\P^k$ such 
that $g>0$ on some open subset $U$ of $\supp(\mu)$ and $\langle m_\phi,g\rangle=0$.
The
$\lam^{-1}\Ll^*$-invariance
of $m_\phi$ implies that 
$\langle m_\phi, \Ll^ng\rangle = \lambda^n\langle m_\phi,g\rangle =0$ 
and the definition of $\Ll$ implies that $\Ll^n(g)>0$ on $f^n(U)$.
It follows that $m_\phi$ has no mass on $f^n(U)$ and hence on $\cup_{n\geq 0} f^n(U)$. On the 
other hand, we have for every $x\in \P^k$ that $d^{-kn} (f^n)^*(\delta_x)$ converges to $\mu$, see, e.g.,
\cite[Cor.\,1.4]{dinh2010equidistribution}.
Therefore, $f^{-n}(\delta_x)\cap U\neq\varnothing$ for some $n$ or equivalently $x\in \cup_{n\geq 0} f^n(U)$.
So we have $\cup_{n \geq 0} f^n (U) = \P^k$.
This contradicts the fact that $m_\phi$
has no mass on this union. So we have $\supp(m_\phi)=\supp(\mu)$ as desired.

For the last assertion of
 the proposition, we can replace $g$ with $g-c_g\rho$ in order to assume that $c_g=0$ for $g\in\Fc$. 
By Lemma \ref{l:Ln-g-equi}, the family $\Fc_\N$ is uniformly 
bounded and equicontinuous. So the limit of the sequence of 
sets $\lam^{-n}\Ll^n(\Fc)$ is a compact, uniformly bounded and 
equicontinuous family of functions that we denote by $\Fc_\infty$. 
This family is invariant by $\lam^{-1}\Ll$ and we also have $c_g=0$ 
for $g\in\Fc_\infty$. We want to show that it contains only the function 0.

Define
$$M:=\max\{l(a)/\rho(a) \ \colon \  l\in\Fc_\infty, a\in\P^k\}.$$
Choose a function $l\in \Fc_\infty$ and a point $a$ such that 
$l(a)/\rho(a)=M$. There are an increasing sequence of integers 
$(n_j)$ and a sequence $(g_j)\subset\Fc$ such that $\lam^{-n_j}\Ll^{n_j}(g_j)$ 
converges uniformly to $l$. For every $n\geq 0$, choose a limit 
function $l_{-n}$ of the sequence $\lam^{-n_j+n}\Ll^{n_j-n}(g_j)$. 
We have $l=\lam^{-n} \Ll^n(l_{-n})$ and $l_{-n}\in\Fc_\infty$. 

As in the end of the proof of Theorem \ref{t:main-lam},
 we obtain that $l_{-n}/\rho=M$ on the set $f^{-n}(a)$ and if $l_{-\infty}$ 
 is a limit of the sequence $l_{-n}$ then $l_{-\infty}$ belongs to $\Fc_\infty$ 
 and $l_{-\infty}/\rho=M$ on the small Julia set $\supp(\mu)$. Since $m_\phi$ 
 is supported by the small Julia set and $\langle m_\phi,g\rangle =c_g=0$ 
 for $g\in\Fc_\infty$, we conclude that $M=0$. Using the same argument
  for $-g$ with $g\in\Fc$, we obtain that the minimal value of the functions 
  in $\Fc_\infty$ is also 0. So $\Fc_\infty$ contains only the function 0.
This ends the proof of the proposition.
\endproof

Proposition \ref{p:cv-m-phi} in particular gives the following
equidistribution result
for the (weighted) preimages of a given point.

\begin{corollary}\label{c:equi-preim}
Under the assumptions
 of Theorem \ref{t:main}, 
for every $x\in\P^k$
 the points in $f^{-n}(x)$, with suitable weights, 
are equidistributed with respect to the conformal measure $m_\phi$ 
when $n$ tends to infinity. More precisely, if $\delta_a$ denotes the Dirac mass at $a$, then
$$\lim_{n\to\infty} \lambda^{-n} \sum_{f^n(a)=x} e^{\phi(a)+\cdots+\phi(f^{n-1}(a))} \delta_a = \rho(x) m_\phi$$
for every $x\in \P^k$.
\end{corollary}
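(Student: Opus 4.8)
The plan is to recognize the weighted sum of Dirac masses in the statement as the iterated dual transfer operator $(\Ll^n)^*$ applied to the Dirac mass $\delta_x$, and then to invoke Proposition \ref{p:cv-m-phi}. Concretely, I would first unwind the formula for $\Ll^n$ recalled at the beginning of Section \ref{s:lambda-rho}: for any continuous test function $g\colon\P^k\to\R$,
$$
\langle (\Ll^n)^*(\delta_x), g\rangle = \langle \delta_x, \Ll^n(g)\rangle = \Ll^n(g)(x) = \sum_{f^n(a)=x} e^{\phi(a)+\phi(f(a))+\cdots+\phi(f^{n-1}(a))}\, g(a),
$$
where the points $a$ are counted with multiplicity. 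This shows the identity of measures
$$
(\Ll^n)^*(\delta_x) = \sum_{f^n(a)=x} e^{\phi(a)+\cdots+\phi(f^{n-1}(a))}\,\delta_a,
$$
so that the measure appearing on the left-hand side of the corollary is exactly $\lambda^{-n}(\Ll^n)^*(\delta_x)$; the ``suitable weights'' are the exponentials of the Birkhoff sums $\phi(a)+\cdots+\phi(f^{n-1}(a))$, renormalized by the factor $\lambda^{-n}$.

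Next I would apply the convergence assertion of Proposition \ref{p:cv-m-phi} to the positive probability measure $\nu:=\delta_x$. That proposition gives $\lambda^{-n}(\Ll^n)^*(\nu)\to \langle\nu,\rho\rangle\, m_\phi$, and with $\nu=\delta_x$ this reads $\lambda^{-n}(\Ll^n)^*(\delta_x)\to \rho(x)\, m_\phi$ as $n\to\infty$, in the weak topology of measures. Combining with the identification of $(\Ll^n)^*(\delta_x)$ above yields precisely the claimed limit.

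Finally, to justify calling this an equidistribution, I would note that the total masses converge: taking $g=\1$ gives $\lambda^{-n}\langle(\Ll^n)^*(\delta_x),\1\rangle = \lambda^{-n}\Ll^n(\1)(x)\to\rho(x)$ by Theorem \ref{t:main-lam} (with $c_{\1}=\langle m_\phi,\1\rangle=1$), which matches the total mass $\rho(x)$ of $\rho(x)\,m_\phi$ since $m_\phi$ is a probability measure; hence the normalized measures $\big(\lambda^{-n}\Ll^n(\1)(x)\big)^{-1}\lambda^{-n}(\Ll^n)^*(\delta_x)$ converge to $m_\phi$ itself. There is no genuine obstacle here: the statement is a direct corollary of Proposition \ref{p:cv-m-phi}, and the only point requiring a little care is the bookkeeping of multiplicities in the definition of $\Ll^n$, ensuring that the identity $(\Ll^n)^*(\delta_x)=\sum_{f^n(a)=x} e^{\phi(a)+\cdots+\phi(f^{n-1}(a))}\delta_a$ holds verbatim.
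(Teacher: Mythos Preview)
Your proposal is correct and follows essentially the same approach as the paper: both recognize that pairing the weighted Dirac sum with a continuous test function $g$ gives $\lambda^{-n}(\Ll^n g)(x)$, and then use the convergence $\lambda^{-n}\Ll^n(g)\to c_g\rho$ together with the identification $c_g=\langle m_\phi,g\rangle$. The only cosmetic difference is that you package this as an application of the convergence assertion in Proposition~\ref{p:cv-m-phi} for $\nu=\delta_x$, whereas the paper states the computation directly against test functions; these are the same argument at slightly different levels of citation.
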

\proof
Denote by $\mu_n$ the measure in the LHS of the last identity.
Let $g$ be any 
continuous function on $\P^k$. We have
$$\langle \mu_n,g\rangle
= \lambda^{-n}  \sum_{f^n(a)=x} e^{\phi(a)+\cdots+\phi(f^{n-1}(a))} g(a) = \lambda^{-n} (\Ll^ng)(x).$$
The last expression converges to $c_g\rho(x)=\rho(x)\langle m_\phi,g\rangle$. The result follows.
\endproof

For our convenience, define the operator $L$ by 
$L(g):=(\lam \rho)^{-1}\Ll(\rho g)$. Define also the positive measure $\mu_\phi$ 
by $\mu_\phi:=\rho m_\phi$. We have the following lemma.

\begin{lemma} \label{l:def-mu-phi}
For any continuous function $g\colon \P^k\to \R$, 
the sequence 
$L^n(g)$ converges
uniformly to the constant $c_{\rho g} = \langle \mu_\phi,g\rangle=\langle m_\phi,\rho g\rangle$. 
We also have that $\mu_\phi$ is an $f$-invariant probability measure
such that $\supp(\mu_\phi)=\supp(\mu)$. 
\end{lemma}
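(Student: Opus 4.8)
The plan is to reduce both assertions to Theorem~\ref{t:main-lam} and Proposition~\ref{p:cv-m-phi} by means of the elementary algebraic identities relating $L$, $\Ll$, the density $\rho$, and the composition operator $g\mapsto g\circ f$. First I would record the bookkeeping identity
\[
L^n(g)=\frac{1}{\lambda^n\rho}\,\Ll^n(\rho g)\qquad\text{for every continuous }g\text{ and every }n\geq 0,
\]
which follows by an immediate induction from $L(g)=(\lambda\rho)^{-1}\Ll(\rho g)$ together with $\Ll(\rho)=\lambda\rho$ (recall that $\rho$ was constructed so that $\lambda^{-1}\Ll(\rho)=\rho$). Since $\rho g$ is continuous, Theorem~\ref{t:main-lam} gives that $\lambda^{-n}\Ll^n(\rho g)$ converges uniformly to $c_{\rho g}\rho$, and by Proposition~\ref{p:cv-m-phi} the constant equals $c_{\rho g}=\langle m_\phi,\rho g\rangle$. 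As $\rho$ is continuous and strictly positive, hence bounded away from $0$, dividing by $\rho$ preserves uniform convergence; therefore $L^n(g)=\rho^{-1}\bigl(\lambda^{-n}\Ll^n(\rho g)\bigr)$ converges uniformly to $c_{\rho g}=\langle m_\phi,\rho g\rangle=\langle\rho m_\phi,g\rangle=\langle\mu_\phi,g\rangle$, which is the first assertion.

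Next I would verify that $\mu_\phi=\rho m_\phi$ is a probability measure: it is a positive measure because $\rho>0$ and $m_\phi\geq 0$, and $\langle\mu_\phi,\1\rangle=\langle m_\phi,\rho\rangle=1$, the last equality being established in Proposition~\ref{p:cv-m-phi}. For the $f$-invariance, the key elementary fact is the intertwining identity
\[
\Ll\bigl((g\circ f)\,h\bigr)=g\,\Ll(h)\qquad\text{for all continuous }g,h,
\]
which is checked directly from the definition of $\Ll$, counting preimages with multiplicity. Applying it with $h=\rho$ and using $\Ll(\rho)=\lambda\rho$ gives $L(g\circ f)=(\lambda\rho)^{-1}\Ll\bigl(\rho\,(g\circ f)\bigr)=(\lambda\rho)^{-1}g\,\Ll(\rho)=g$, hence $L^n(g\circ f)=L^{n-1}(g)$ for $n\geq 1$. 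Letting $n\to\infty$ in the first assertion yields $\langle\mu_\phi,g\circ f\rangle=\langle\mu_\phi,g\rangle$ for every continuous $g$, i.e.\ $\mu_\phi$ is $f$-invariant. Finally $\supp(\mu_\phi)=\supp(m_\phi)$ since $\rho$ is continuous and everywhere positive, and $\supp(m_\phi)=\supp(\mu)$ by Proposition~\ref{p:cv-m-phi}.

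There is no real obstacle here: the statement is essentially a repackaging of Theorem~\ref{t:main-lam} and Proposition~\ref{p:cv-m-phi}. The only points requiring a little care are the induction proving $L^n(g)=(\lambda^n\rho)^{-1}\Ll^n(\rho g)$ and the verification, with multiplicities, of the identity $\Ll\bigl((g\circ f)h\bigr)=g\,\Ll(h)$, from which the whole $f$-invariance is extracted.
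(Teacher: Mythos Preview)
Your proof is correct and follows essentially the same route as the paper: both reduce the first assertion to Theorem~\ref{t:main-lam} and Proposition~\ref{p:cv-m-phi} via $L^n(g)=(\lambda^n\rho)^{-1}\Ll^n(\rho g)$, both establish the key identity $L(g\circ f)=g$, and both read off the support from $\supp(m_\phi)=\supp(\mu)$ and the strict positivity of $\rho$. The only cosmetic difference is in the invariance step: the paper uses the $\lambda^{-1}\Ll^*$-invariance of $m_\phi$ directly to get $\langle\mu_\phi,g\circ f\rangle=\langle\mu_\phi,L(g\circ f)\rangle=\langle\mu_\phi,g\rangle$, whereas you deduce it by passing to the limit in $L^n(g\circ f)=L^{n-1}(g)$; both arguments rest on the same identity and are equally short.
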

\proof
Define $g':=\rho g$. We have $c_{g'}= \langle m_\phi, \rho g \rangle = \langle\mu_\phi,g\rangle$.
The first assertion is 
a direct
consequence of the fact that $\lam^{-n}\Ll^n(g')$ converges uniformly to $c_{g'}\rho$.

For the second assertion, 
we have seen in the proof of Proposition \ref{p:cv-m-phi} that $\langle m_\phi,\rho\rangle=1$. 
It follows that $\mu_\phi$ is a probability measure. Moreover, we 
obtain from the $\lam^{-1}\Ll^*$-invariance
of $m_\phi$ that
$$\langle \mu_\phi,g\circ f\rangle=\langle m_\phi,\rho(g\circ f)\rangle=\langle \lambda^{-1}\Ll^*(m_\phi),\rho(g\circ f)\rangle
=\langle m_\phi, \lam^{-1} \Ll(\rho(g\circ f))\rangle=\langle \mu_\phi, L(g\circ f)\rangle.$$
Using that $\lam^{-1}\Ll(\rho)=\rho$ and the definition of $\Ll$, we can easily check that 
$L(g\circ f)=g$. So the previous identities 
imply that $\langle \mu_\phi,g\circ f\rangle=\langle\mu_\phi,g\rangle$. 
Hence, $\mu_\phi$ is an invariant measure. The assertion on the 
support of $\mu_\phi$ is clear because $\supp(m_\phi)=\supp(\mu)$ by Proposition \ref{p:cv-m-phi} and $\rho$
is strictly positive.  
\endproof

The operator 
$\Ll$ can also be extended to a continuous operator on
$L^2 (\mu_\phi)$ and $L^2 (m_\phi)$. Since $\mu_\phi=\rho m_\phi$
and $\rho$ is positive and continuous, these two spaces are actually the same and the corresponding norms are equivalent. 

\begin{lemma}\label{l:norm-L2-mu-phi}
Under the assumptions of Theorem \ref{t:main}, the operator
$\Ll$ extends to a linear continuous operator on 
$L^2 (m_\phi)$ whose norm is
bounded by $\lambda e^{{1\over 2}\Omega(\phi)}$.
Moreover, there exists a positive constant 
$c$ such that $\norm{\lam^{-n}\Ll^n}_{L^2 (m_\phi)}\leq c$ for all $n\geq 0$.
\end{lemma}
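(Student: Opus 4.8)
The plan is to derive both bounds from a single pointwise Cauchy--Schwarz inequality applied to the finite sums defining $\Ll$ and $\Ll^n$, combined with the conformality relations $\Ll^* m_\phi=\lambda m_\phi$ and, for the uniform estimate, the bound $\rho_n^+\lesssim\lambda^n$ from Lemma \ref{l:bound-omega-lam}. I would work first with a continuous test function $g$ and pass to general $L^2(m_\phi)$ by density at the end.

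For the norm bound, I would write $e^{\phi(x)}g(x)=e^{\phi(x)/2}\cdot\big(e^{\phi(x)/2}g(x)\big)$ and apply Cauchy--Schwarz to the $d^k$ terms (counted with multiplicity) of $\Ll g(y)=\sum_{f(x)=y}e^{\phi(x)}g(x)$. This gives the pointwise estimate
\[
\abs{\Ll g(y)}^2\leq\Big(\sum_{f(x)=y}e^{\phi(x)}\Big)\Big(\sum_{f(x)=y}e^{\phi(x)}\abs{g(x)}^2\Big)=\Ll\1(y)\cdot\Ll(\abs{g}^2)(y).
\]
Since $\Ll\1(y)=\sum_{f(x)=y}e^{\phi(x)}\leq d^k e^{\max\phi}$, and $\lambda\geq d^k e^{\min\phi}$ by the definition of $\lambda$, one has $\Ll\1\leq\lambda e^{\Omega(\phi)}$ everywhere. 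Integrating the displayed inequality against $m_\phi$ and using $\Ll^* m_\phi=\lambda m_\phi$ then yields
\[
\norm{\Ll g}_{L^2(m_\phi)}^2\leq\lambda e^{\Omega(\phi)}\langle m_\phi,\Ll(\abs{g}^2)\rangle=\lambda e^{\Omega(\phi)}\langle\Ll^* m_\phi,\abs{g}^2\rangle=\lambda^2 e^{\Omega(\phi)}\norm{g}_{L^2(m_\phi)}^2,
\]
i.e.\ $\norm{\Ll g}_{L^2(m_\phi)}\leq\lambda e^{\frac12\Omega(\phi)}\norm{g}_{L^2(m_\phi)}$. Since continuous functions are dense in $L^2(m_\phi)$, $\Ll$ extends uniquely to a bounded operator on $L^2(m_\phi)$ with the same norm bound.

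For the last assertion, I would run the same argument with $\Ll^n$: its defining sum $\Ll^n g(y)=\sum_{f^n(x)=y}e^{\phi(x)+\cdots+\phi(f^{n-1}(x))}g(x)$ has $d^{kn}$ terms counted with multiplicity, so Cauchy--Schwarz gives $\abs{\Ll^n g(y)}^2\leq\Ll^n\1(y)\cdot\Ll^n(\abs{g}^2)(y)=\1_n(y)\cdot\Ll^n(\abs{g}^2)(y)$. By Lemma \ref{l:bound-omega-lam} there is a constant $C>0$ with $\1_n\leq\rho_n^+\leq C\lambda^n$ for all $n$; integrating against $m_\phi$ and using $(\Ll^n)^* m_\phi=\lambda^n m_\phi$ gives $\norm{\Ll^n g}_{L^2(m_\phi)}^2\leq C\lambda^{2n}\norm{g}_{L^2(m_\phi)}^2$, hence $\norm{\lambda^{-n}\Ll^n}_{L^2(m_\phi)}\leq\sqrt C$ for all $n\geq 0$. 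Equivalently, one can note that the normalized operator $L$ of Lemma \ref{l:def-mu-phi} satisfies $\abs{Lg}^2\leq L(\abs{g}^2)$ pointwise by the same Cauchy--Schwarz (now using $\Ll\rho=\lambda\rho$); since $\mu_\phi$ is $L^*$-invariant — which follows directly from $\Ll^* m_\phi=\lambda m_\phi$ — this makes $L$ a contraction of $L^2(\mu_\phi)$, and the claim follows from $\Ll^n=\lambda^n\rho\,L^n(\cdot/\rho)$ together with the equivalence of the norms of $L^2(m_\phi)$ and $L^2(\mu_\phi)$, $\rho$ being continuous and strictly positive on the compact space $\P^k$.

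I do not expect any real obstacle: the whole lemma is Cauchy--Schwarz plus conformality of $m_\phi$. The only points requiring a little attention are extracting the constant so that it is exactly $\lambda e^{\frac12\Omega(\phi)}$ (this is where the lower bound $\lambda\geq d^k e^{\min\phi}$ is used) and the routine density argument extending $\Ll$ from continuous functions to all of $L^2(m_\phi)$.
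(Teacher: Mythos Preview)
Your proposal is correct and follows essentially the same route as the paper: both apply Cauchy--Schwarz pointwise to get $|\Ll^n g|^2\leq(\Ll^n\1)\,\Ll^n(|g|^2)$, bound $\Ll^n\1$ by $\rho_n^+$, integrate against $m_\phi$ using $(\Ll^n)^*m_\phi=\lambda^n m_\phi$, and then invoke $\rho_n^+\lesssim\lambda^n$ for the uniform bound and $\rho_1^+\leq d^k e^{\max\phi}$, $\lambda\geq d^k e^{\min\phi}$ for the precise constant when $n=1$. Your alternative reformulation via $L$ and $L^2(\mu_\phi)$ is a nice extra observation but not needed.
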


\begin{proof}
By Cauchy-Schwarz's 
inequality and using the $\lam^{-1}\Ll^*$-invariance of $m_\phi$, we have
\[
\big\langle m_\phi,  \abs{\Ll^n g}^2 \big\rangle \leq
\big\langle  m_\phi , ({\Ll^n \1}) \cdot (\Ll^n \abs{g}^2) \big\rangle \leq \rho_n^+ \big\langle  m_\phi , \Ll^n \abs{g}^2 \big\rangle
= \rho_n^+\lambda^{n} \big\langle  m_\phi , \abs{g}^2 \big\rangle
\] 
for every $g\in 
L^2 (m_\phi)$ and $n\geq 0$. 
The second assertion of the lemma follows because $\rho_n^+\lesssim \lambda^n$.

For the first assertion, take $n=1$. From the definition of $\rho_n^+$ and $\lambda$, we have $\rho_1^+\leq d^ke^{\max\phi}$ and $\lambda\geq d^ke^{\min\phi}$.
The above inequality implies that 
$$\big\langle m_\phi,  \abs{\Ll g}^2 \big\rangle \leq e^{\Omega(\phi)}\lambda^2 \big\langle  m_\phi , \abs{g}^2 \big\rangle.$$
The first assertion in the lemma follows.
\end{proof}

\begin{proposition}\label{p:mixing}
Under the assumptions of Theorem \ref{t:main},
the measure $\mu_\phi = \rho m_\phi$ is K-mixing and mixing of all orders.
\end{proposition}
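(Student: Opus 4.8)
The plan is to deduce both properties from one transfer identity combined with the uniform convergence $L^n h\to\langle\mu_\phi,h\rangle$ of Lemma~\ref{l:def-mu-phi}. First I would record, for continuous $g,h$ on $\P^k$ and every $n\ge0$, the relation
\[
\langle\mu_\phi,(g\circ f^n)\,h\rangle=\langle\mu_\phi,g\,L^n h\rangle .
\]
For $n=1$ this follows from the definition $L(h)=(\lam\rho)^{-1}\Ll(\rho h)$, the $\lam^{-1}\Ll^*$-invariance of $m_\phi$, and the elementary identity $\Ll(\rho\,(g\circ f)\,h)=g\cdot\Ll(\rho h)$ (immediate from the definition of $\Ll$); the general case follows by iteration, using $L(g\circ f)=g$ from Lemma~\ref{l:def-mu-phi}, and the same computation shows the identity persists for bounded Borel $g$ in the first slot. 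I would also note that $L$ is positive, $L\1=\1$ (because $\lam^{-1}\Ll(\rho)=\rho$) and $L^*\mu_\phi=\mu_\phi$, hence $L$ is a contraction on $L^1(\mu_\phi)$ and on $L^\infty(\mu_\phi)$; in particular $\|L^n h\|_\infty\le\|h\|_\infty$ for all $n$.

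For the $K$-mixing part I would prove the a priori stronger property that $(\P^k,\mu_\phi,f)$ is \emph{exact}, i.e.\ that the tail $\sigma$-algebra $\bigcap_{n\ge0}f^{-n}(\Bc_{\P^k})$ is $\mu_\phi$-trivial; $K$-mixing then follows by standard facts (equivalently, the natural extension of $(\P^k,\mu_\phi,f)$ is a Kolmogorov automorphism). Let $A$ belong to this tail $\sigma$-algebra, so $\1_A=\1_{A_n}\circ f^n$ for suitable Borel sets $A_n$; then $L^n\1_A=\1_{A_n}$, whence $\mu_\phi(A_n)=\langle\mu_\phi,L^n\1_A\rangle=\mu_\phi(A)$ for every $n$. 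For any continuous $g$,
\[
\langle\mu_\phi,\1_A\,g\rangle=\langle\mu_\phi,\1_{A_n}\,L^n g\rangle
=\mu_\phi(A)\,\langle\mu_\phi,g\rangle+\langle\mu_\phi,\1_{A_n}\,(L^n g-\langle\mu_\phi,g\rangle)\rangle ,
\]
and the last term is bounded by $\mu_\phi(A)\,\|L^n g-\langle\mu_\phi,g\rangle\|_\infty$, which tends to $0$ by Lemma~\ref{l:def-mu-phi}. Hence $\langle\mu_\phi,\1_A\,g\rangle=\mu_\phi(A)\langle\mu_\phi,g\rangle$ for all continuous $g$, so $\1_A=\mu_\phi(A)$ $\mu_\phi$-a.e., i.e.\ $\mu_\phi(A)\in\{0,1\}$.

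Exactness already gives mixing of all orders through the natural extension, but I would prefer a direct, elementary proof by induction on the number $r$ of forward factors. Fix continuous $g_0,\dots,g_r$ and integers $n_1,\dots,n_r\ge1$, put $m_j:=n_1+\cdots+n_j$ and $a:=g_1\prod_{j=2}^{r}\big(g_j\circ f^{\,n_2+\cdots+n_j}\big)$, so that $g_0\prod_{j=1}^{r}(g_j\circ f^{m_j})=g_0\cdot(a\circ f^{n_1})$; the transfer identity then yields
\[
\Big\langle\mu_\phi,\ g_0\prod_{j=1}^{r}g_j\circ f^{m_j}\Big\rangle=\langle\mu_\phi,\ a\,L^{n_1}g_0\rangle
=\langle\mu_\phi,g_0\rangle\langle\mu_\phi,a\rangle+\langle\mu_\phi,\ a\,(L^{n_1}g_0-\langle\mu_\phi,g_0\rangle)\rangle .
\]
The second term is at most $\big(\textstyle\prod_{j=1}^{r}\|g_j\|_\infty\big)\,\|L^{n_1}g_0-\langle\mu_\phi,g_0\rangle\|_\infty$, tending to $0$ as $n_1\to\infty$ \emph{uniformly} in $n_2,\dots,n_r$, while $\langle\mu_\phi,a\rangle\to\prod_{j=1}^{r}\langle\mu_\phi,g_j\rangle$ as $n_2,\dots,n_r\to\infty$ by the inductive hypothesis (the base case $r=1$ being mixing itself, which also drops out of the identity); choosing $n_1$ large first and then letting the remaining indices grow gives $\langle\mu_\phi,\prod_{j=0}^{r}g_j\circ f^{m_j}\rangle\to\prod_{j=0}^{r}\langle\mu_\phi,g_j\rangle$, and extending from continuous to $L^1$ test functions is routine. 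The substantive input is already in place — the uniform convergence $L^n g\to\langle\mu_\phi,g\rangle$, which rests on all of Section~\ref{s:lambda-rho}; the only points needing care are the passage from continuous to indicator functions in the exactness argument (handled by the $L^1$-contraction of $L$) and the order of limits in the induction, which must be arranged so that the first-factor error is controlled uniformly in the remaining time-shifts before the inductive hypothesis is used. I do not anticipate any genuine obstacle beyond this.
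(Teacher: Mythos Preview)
Your proof is correct and, for mixing of all orders, follows essentially the same inductive scheme as the paper: both reduce the $(r+1)$-fold correlation to the $r$-fold one via the transfer identity $\langle\mu_\phi,(g\circ f^n)h\rangle=\langle\mu_\phi,g\,L^n h\rangle$ and the uniform convergence $L^{n_1}g_0\to\langle\mu_\phi,g_0\rangle$ from Lemma~\ref{l:def-mu-phi}. The only cosmetic difference is that the paper first subtracts the constant to assume $\langle\mu_\phi,g_0\rangle=0$, whereas you keep the constant and split off the main term explicitly.

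For $K$-mixing the two arguments genuinely diverge. The paper verifies directly the uniform-mixing property it states (namely, $\langle\mu_\phi,g(l\circ f^n)\rangle-\langle\mu_\phi,g\rangle\langle\mu_\phi,l\rangle\to0$ uniformly over $\|l\|_{L^2(\mu_\phi)}\le1$): it approximates $g\in L^2(\mu_\phi)$ by a continuous $g'$ with $\langle\mu_\phi,g'\rangle=0$ and then bounds $|\langle\mu_\phi,g'(l\circ f^n)\rangle|=|\langle\mu_\phi,l\,L^n g'\rangle|\le\|L^n g'\|_\infty\to0$. You instead prove \emph{exactness} of $(\P^k,\mu_\phi,f)$ --- using $L^n\1_A=\1_{A_n}$ for tail sets and the same convergence --- and then invoke the standard fact that exactness implies $K$-mixing. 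Your route gives a nominally stronger conclusion and is conceptually clean; the paper's is fully self-contained and tailored to the precise form of $K$-mixing used in the statement. Either way the substantive input is identical, namely the uniform convergence of $L^n g$ for continuous $g$ established in Section~\ref{s:lambda-rho}.
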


\proof
We start with the second property.
Let $\{g_0,\ldots,g_r\}$ be any finite family of continuous 
test functions on $\P^k$. We need to show 
that,
for $0=n_0\leq n_1\leq\cdots \leq n_r$,
$$\big\langle \mu_\phi,g_0(g_1\circ f^{n_1})\ldots (g_r\circ f^{n_r})\big\rangle
 - \prod_{j=0}^r \langle \mu_\phi, g_j\rangle \to 0$$
when $n:=\inf_{0\leq j<r} (n_{j+1}-n_j)$ tends to infinity.
This property is clearly true for $r=0$. Take $r\geq 1$. By induction, 
we can assume that the above convergence holds for the case of $r-1$ 
test functions. We prove now the same property for $r$ test functions.

By the $f_*$-invariance of $\mu_\phi$ and the induction hypothesis, we have 
$$\big\langle \mu_\phi,(g_1\circ f^{n_1})\ldots (g_r\circ f^{n_r})\big\rangle = 
\big\langle \mu_\phi, g_1(g_2\circ f^{n_2-n_1})\ldots (g_r\circ f^{n_r-n_1})\big\rangle
 \rightarrow  \prod_{j=1}^r \langle \mu_\phi, g_j\rangle.$$ 
So the desired property holds when $g_0$ is a constant function. Therefore, 
we can subtract from $g_0$ a constant and assume
 that $\langle \mu_\phi, g_0\rangle=0$, which implies that the 
 product $\prod_{j=0}^n \langle\mu_\phi,g_j\rangle$ vanishes.  
Using that $\lam^{-1}\Ll(\rho)=\rho$, the $\lam^{-1}\Ll^*$-invariance
of $m_\phi$, and the definition of $\Ll$, we can easily check by induction that for all functions $g,l$ we have
$$\langle \mu_\phi,g\rangle =
  \langle \mu_\phi, L^n(g)\rangle \qquad \text{and} \qquad L^n(g(l\circ f^n))=L^n(g)l.$$
We then deduce that
\begin{eqnarray*}
\big\langle \mu_\phi,g_0(g_1\circ f^{n_1})\ldots (g_r\circ f^{n_r})\big\rangle 
&=& \big\langle \mu_\phi, L^{n_1}\big(g_0(g_1\circ f^{n_1})\ldots (g_r\circ f^{n_r})\big)\big\rangle \\
&=& \big\langle \mu_\phi, L^{n_1}(g_0) g_1\ldots (g_r\circ f^{n_r-n_1})\big\rangle.
\end{eqnarray*}
By Lemma \ref{l:def-mu-phi}, the sequence
$L^{n_1}(g_0)$
converges uniformly to 0
as $n_1$ tends to $\infty$. 
So the last integral tends to 0 because the function
 $g_1\ldots (g_r\circ f^{n_r-n_1})$ is bounded. We then 
 conclude that $\mu_\phi$ is mixing of all orders.

We prove now that $\mu_\phi$ is K-mixing, that is,
 that given $g \in L^2 (\mu_\phi)$,
when $n$ tends to infinity the difference
$$\big\langle \mu_\phi, g(l\circ f^n)\big\rangle - \langle \mu_\phi,g\rangle \langle \mu_\phi, l\rangle$$
tends to 0 uniformly on test functions 
$l$ whose $L^2(\mu_\phi)$-norm is bounded by a constant.
As above, we can assume that $\langle\mu_\phi,g\rangle=0$. We 
can also assume that the $L^2(\mu_\phi)$-norms of $g$ and $l$ 
are bounded by 1. 
Fix an arbitrary constant $\epsilon>0$. It is enough to show the
 existence of an integer $N=N(\epsilon)$ independent of $l$ such that 
$|\langle \mu_\phi, g(l\circ f^n)\rangle|\leq 2\epsilon$ for $n\geq N$. 

Choose a continuous function $g'$ such that $\langle \mu_\phi,g'\rangle=0$ and $\|g-g'\|_{L^2(\mu_\phi)}\leq \epsilon$.
Using the invariance of $\mu_\phi$ we have
\begin{eqnarray*}
|\langle \mu_\phi, g(l\circ f^n)\rangle-\langle \mu_\phi, g'(l\circ f^n)\rangle| 
&=& |\langle \mu_\phi, (g-g')(l\circ f^n)\rangle| \leq \|g-g'\|_{L^2(\mu_\phi)}
\|l\circ f^n\|_{L^2(\mu_\phi)} \\
&=& \|g-g'\|_{L^2(\mu_\phi)} \|l\|_{L^2(\mu_\phi)} \leq \epsilon.
\end{eqnarray*}
It remains to show that $|\langle \mu_\phi, g'(l\circ f^n)\rangle|\leq \epsilon$ 
when $n\geq N$ for some $N$ large enough. 
As above, we have 
$$|\langle \mu_\phi, g'(l\circ f^n)\rangle|
 = |\langle \mu_\phi, L^n(g')l\rangle| \leq \| L^n(g')\|_\infty.$$
Lemma \ref{l:def-mu-phi} and the identity $\langle \mu_\phi,g'\rangle=0$ imply the result.
\endproof

For positive real numbers $q,M$, and $\Omega$ with $q>2$ and $\Omega<\log d$, consider the following set of weights
$$\Pc(q,M,\Omega):=\big\{\phi\colon\P^k\to\R \ :   \	\|\phi\|_{\log^q}\leq M,\ \Omega(\phi)\leq \Omega\big\}$$  
and the uniform topology induced by the sup norm. Observe that this family is equicontinuous. 
In the two lemmas below, we study the dependence on $\phi\in\Pc(q,M,\Omega)$
of the objects introduced in this section. 
Therefore, we will use the index $\phi$ or parameter $\phi$ for objects which depend on $\phi$, e.g., we will 
write $\lambda_\phi, \Ll_\phi, \rho_\phi, \1_n(\phi)$ instead of $\lambda, \Ll, \rho$ and $\1_n$.

\begin{lemma} \label{l:dependence}
Let $q,M$, and $\Omega$ be positive real numbers 
such that $q>2$ and $\Omega<\log d$.
The maps $\phi\mapsto\lambda_\phi$, $\phi\mapsto m_\phi$, $\phi\mapsto \mu_\phi$,
and $\phi\mapsto \rho_\phi$ are continuous on $\phi\in \Pc(q,M,\Omega)$ with
respect to the standard topology on $\R$, the
weak topology on measures, and the uniform topology on functions. 
In particular, $\rho_\phi$ is bounded from above and below by positive
constants which are independent of $\phi\in \Pc(q,M,\Omega)$.
Moreover, $\|\lambda_\phi^{-n}\Ll_\phi^n\|_\infty$ is bounded by a
constant which is independent of $n$ and of $\phi\in \Pc(q,M,\Omega)$.
\end{lemma}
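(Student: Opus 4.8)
The plan is to deduce everything from uniform versions of the estimates in Sections \ref{s:preliminary} and \ref{s:lambda-rho}. First I would normalize the weight: replacing $\phi$ by $\phi-\min\phi$ multiplies $\Ll_\phi$, $\1_n(\phi)$, $\rho_n^\pm(\phi)$, $\lambda_\phi$ by suitable powers of $e^{\min\phi}$ and leaves $\theta_n$, $\1_n^*(\phi)$, $\rho_\phi$, $m_\phi$, $\mu_\phi$ and the normalized operator $\lambda_\phi^{-n}\Ll_\phi^n$ unchanged; since $\phi\mapsto\min\phi$ is $1$-Lipschitz for the sup norm, all continuity and boundedness statements reduce to the subfamily $\Pc_0:=\{\phi\in\Pc(q,M,\Omega):\min\phi=0\}$, on which $\|\phi\|_\infty=\Omega(\phi)\leq\Omega$ is uniformly bounded. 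Then every constant of the form $c=c(\|\phi\|_{\log^q},\|\phi\|_\infty)$ occurring in Lemma \ref{l:u-n-sup}, Proposition \ref{p:laplace-1-n}, Lemmas \ref{l:theta-bound} and \ref{l:eq-1-n}, Proposition \ref{p:theta-n-1-n} and Lemma \ref{l:bound-omega-lam} becomes independent of $\phi\in\Pc_0$. In particular $(\theta_n(\phi))_n$ is bounded uniformly in $\phi$, the family $\{\lambda_\phi^{-n}\1_n(\phi):n\geq 0,\ \phi\in\Pc_0\}$ is uniformly bounded and equicontinuous with $\lambda_\phi^{-n}\rho_n^-(\phi)$ bounded below by a fixed positive constant, and $\|\lambda_\phi^{-n}\Ll_\phi^n\|_\infty=\lambda_\phi^{-n}\rho_n^+(\phi)$ is bounded above by a fixed constant. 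Letting $n\to\infty$ in $\lambda_\phi^{-n}\1_n(\phi)\to\rho_\phi$ (Theorem \ref{t:main-lam}, Proposition \ref{p:cv-m-phi}) shows at once that $\rho_\phi$ lies between two positive constants independent of $\phi$ and that $\{\rho_\phi:\phi\in\Pc(q,M,\Omega)\}$ is equicontinuous, and it bounds $\|\lambda_\phi^{-n}\Ll_\phi^n\|_\infty$ uniformly; this settles all assertions except the four continuity statements.

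For $\phi\mapsto\lambda_\phi$ I would use positivity and monotonicity of $\Ll$: if $\|\phi-\phi'\|_\infty\leq\epsilon$ then $\Ll_\phi^n\1\leq e^{n\epsilon}\Ll_{\phi'}^n\1$ pointwise, hence $\rho_n^+(\phi)\leq e^{n\epsilon}\rho_n^+(\phi')$, and taking $n$-th roots and using $\lambda_\psi=\inf_n\rho_n^+(\psi)^{1/n}$ gives $|\log\lambda_\phi-\log\lambda_{\phi'}|\leq\|\phi-\phi'\|_\infty$, so the map is even locally Lipschitz. I also record the elementary fact that $\Ll_{\psi_j}g_j\to\Ll_\psi g$ uniformly whenever $\psi_j\to\psi$ and $g_j\to g$ uniformly with $(g_j)$ bounded: split $\Ll_{\psi_j}g_j-\Ll_\psi g=\Ll_{\psi_j}(g_j-g)+(\Ll_{\psi_j}-\Ll_\psi)g$ and use $\|\Ll_\psi\|_{\infty\to\infty}=\rho_1^+(\psi)\leq d^k e^{\max\psi}$.

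Next I would prove continuity of $\phi\mapsto m_\phi$, which needs neither the normalization nor $\rho$: given $\phi_j\to\phi$ in $\Pc(q,M,\Omega)$, the probability measures $m_{\phi_j}$ have a weak-$*$ convergent subsequence with limit a probability measure $m_*$, and passing to the limit in $\langle m_{\phi_j},\Ll_{\phi_j}g\rangle=\lambda_{\phi_j}\langle m_{\phi_j},g\rangle$ (using the two facts above) gives $\Ll_\phi^*m_*=\lambda_\phi m_*$, hence $m_*=m_\phi$ by the uniqueness in Proposition \ref{p:cv-m-phi}; as every subsequential limit is $m_\phi$, we get $m_{\phi_j}\to m_\phi$. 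For $\phi\mapsto\rho_\phi$, given $\phi_j\to\phi$ I would extract, via equicontinuity and uniform boundedness of $\{\rho_{\phi_j}\}$ and Arzel\`a--Ascoli, a subsequence with $\rho_{\phi_j}\to\rho_*$ uniformly; passing to the limit in $\lambda_{\phi_j}^{-1}\Ll_{\phi_j}\rho_{\phi_j}=\rho_{\phi_j}$ shows $\rho_*$ is a continuous, strictly positive function with $\lambda_\phi^{-1}\Ll_\phi\rho_*=\rho_*$. Applying the argument at the end of the proof of Theorem \ref{t:main-lam} (Claims 1 and 2) with $g:=\rho_*$ then gives $\rho_*=t\rho_\phi$ with $t=\max_{\P^k}(\rho_*/\rho_\phi)>0$; integrating against $m_\phi$ and using $\langle m_\phi,\rho_\phi\rangle=1$ yields $t=\langle m_\phi,\rho_*\rangle=\lim_j\langle m_{\phi_j},\rho_{\phi_j}\rangle=1$ (here $m_{\phi_j}\to m_\phi$ weak-$*$, $\rho_{\phi_j}\to\rho_*$ uniformly, and $\langle m_{\phi_j},\rho_{\phi_j}\rangle=1$ by Proposition \ref{p:cv-m-phi}). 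Hence $\rho_*=\rho_\phi$, and since every subsequential limit equals $\rho_\phi$, $\rho_{\phi_j}\to\rho_\phi$ uniformly. Finally $\langle\mu_{\phi_j},g\rangle=\langle m_{\phi_j},\rho_{\phi_j}g\rangle\to\langle m_\phi,\rho_\phi g\rangle=\langle\mu_\phi,g\rangle$ gives continuity of $\phi\mapsto\mu_\phi$.

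I expect the main obstacle to be the last step: identifying the Arzel\`a--Ascoli limit $\rho_*$ with $\rho_\phi$ itself rather than with a positive multiple of it. The resolution is to prove continuity of $\phi\mapsto m_\phi$ first (using only uniqueness of the conformal measure and weak-$*$ compactness) and then to fix the scale through the normalization $\langle m_\phi,\rho_\phi\rangle=1$.
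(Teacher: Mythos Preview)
Your proposal is correct and follows essentially the same approach as the paper's proof: both reduce to the compact family $\Pc_0$ with $\min\phi=0$, observe that the constants in Sections \ref{s:preliminary}--\ref{s:lambda-rho} depend only on $\|\phi\|_{\log^q}$ and $\|\phi\|_\infty$, establish Lipschitz continuity of $\lambda_\phi$ via $|\log\lambda_\phi-\log\lambda_{\phi'}|\leq\|\phi-\phi'\|_\infty$, then prove continuity of $m_\phi$ by weak-$*$ compactness plus uniqueness, and finally identify subsequential limits of $\rho_{\phi_j}$ with $\rho_\phi$ using $\Ll_\phi$-invariance and the normalization $\langle m_\phi,\rho_\phi\rangle=1$. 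The only cosmetic difference is that you obtain the uniform two-sided bounds on $\rho_\phi$ directly from the uniform bounds on $\lambda_\phi^{-n}\1_n(\phi)$ by passing to the limit, whereas the paper deduces them a posteriori from the continuity of $\phi\mapsto\rho_\phi$ and the compactness of $\Pc_0$.
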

\proof
Fix $q,M,$ and $\Omega$ as above. Observe that when we add to $\phi$
a constant $c$ the scaling ratio $\lambda_\phi$ and the operator $\Ll_\phi$
are both changed by a factor $e^c$.
It follows that the
operator $\lambda_\phi^{-1}\Ll_\phi$, the measures $m_\phi$, $\mu_\phi$,
and the density function $\rho_\phi$ do not change. So, for simplicity, it is
enough to prove the lemma for $\phi$ in the family 
$$\Pc_0(q,M,\Omega):=\big\{\phi\colon\P^k\to\R \ :
 \ \min \phi=0,\ \|\phi\|_{\log^q}\leq M,\ \Omega(\phi)\leq \Omega\big\}.$$  
Notice that this family is compact for the uniform topology.

Consider two weights $\phi$ and $\phi'$ in this space. From the definition of $\lambda_\phi$ and $\lambda_{\phi'}$,
we  have $e^{-\|\phi-\phi'\|_\infty} \leq \lambda_\phi/\lambda_{\phi'} \leq e^{\|\phi-\phi'\|_\infty}$.
It follows that $\phi\mapsto\lambda_\phi$ is continuous. When $\phi'\to\phi$,
any limit value of $m_{\phi'}$ is a probability measure invariant by
$\lambda_\phi^{-1}\Ll_\phi^*$ thanks to the invariance of $m_{\phi'}$
by $\lambda_{\phi'}^{-1}\Ll_{\phi'}^*$. Since $m_\phi$ is the only probability
measure which is invariant by $\lambda_\phi^{-1}\Ll_\phi^*$, this limit
value must be $m_\phi$. Thus, $\phi\mapsto m_{\phi'}$ is continuous.

We deduce from the proof of Proposition \ref{p:theta-n-1-n} that
$\theta_n(\phi)=\rho^+_n(\phi)/\rho^-_n(\phi)$ is bounded by a constant independent of $n$ and $\phi$. Moreover, 
the family of functions
$$\big\{\1_n^*(\phi) \quad \text{with} \quad n\geq 0 \quad \text{and} \quad  \phi\in \Pc_0(q,M,\Omega)\big\}$$
is uniformly bounded and equicontinuous. Recall that $\1_n^*(\phi)=(\rho^-_n(\phi))^{-1}\1_n(\phi)$
and $\rho_n^-(\phi)\leq\lambda_\phi^n\leq \rho_n^+(\phi)$, see the proof of Lemma
\ref{l:bound-omega-lam}. It follows that $\lambda_\phi^{-n}\1_n(\phi)$
belongs to a  uniformly bounded and equicontinuous family of functions.

From the definition of $\rho_\phi$ and $\rho_{\phi'}$, we also see that these functions
belong to a uniformly bounded and equicontinuous family of functions. When $\phi'\to\phi$,
if $\rho'$ is any limit of $\rho_{\phi'}$, then $\rho'$ is continuous and invariant by $\lambda_\phi^{-1}\Ll_\phi$ because 
$\rho_{\phi'}$ satisfies a similar property. It follows from Theorem \ref{t:main-lam} that
$\rho'=c\rho_\phi$ for some constant $c$. On the other hand, since
$\mu_{\phi'}=\rho_{\phi'} m_{\phi'}$ is a probability measure, any limit of
$\rho_{\phi'} m_{\phi'}$ is a probability measure. Thus, $\rho' m_\phi=c\mu_\phi$
is a probability measure and hence $c=1$. We conclude that $\rho_{\phi'}\to\rho_\phi$
and also $\mu_{\phi'}\to\mu_\phi$. In other words, the maps $\phi\mapsto \mu_\phi$
and $\phi\mapsto \rho_\phi$ are continuous. Since $\rho_\phi$ is strictly positive and the family
$\Pc_0(q,M,\Omega)$ is compact, we deduce that $\rho_\phi$ is bounded from above
and below by positive constants independent of $\phi$.

The last assertion in the lemma is also clear because
$\|\lambda_\phi^{-n}\Ll_\phi^n\|_\infty=\lambda_\phi^{-n}\|\1_n(\phi)\|_\infty\leq \theta_n(\phi)$.
This ends the proof of the lemma.
\endproof

\begin{lemma} \label{l:dependence-bis}
Let $q,M$, and $\Omega$ be positive real numbers 
such that $q>2$ and $\Omega<\log d$.
Let $\Fc$ be a uniformly bounded and equicontinuous family of
real-valued functions on $\P^k$. Then the family
$$\big\{\lambda_\phi^{-n} \Ll_\phi^n(g) \ : \  n\geq 0, \ \phi\in \Pc(p,M,\Omega), \ g\in\Fc\big\}$$
is equicontinuous. Moreover,
$\big\|\lambda_\phi^{-n} \Ll_\phi^n(g)-\langle m_\phi,g\rangle\big\|_\infty$
tends to $0$ uniformly on $\phi\in \Pc(p,M,\Omega)$ and $g\in\Fc$ when $n$ goes to infinity.
\end{lemma}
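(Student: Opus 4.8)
\emph{Proof proposal.} The plan is to upgrade the proofs of Lemma~\ref{l:Ln-g-equi} and of the last assertion of Proposition~\ref{p:cv-m-phi} to $\phi$-uniform statements, exploiting that all the constants involved depend on $\phi$ only through $\norm{\phi}_{\log^q}$ and $\Omega(\phi)$, together with the continuity statements of Lemma~\ref{l:dependence}. As in the proof of Lemma~\ref{l:dependence}, adding a constant to $\phi$ multiplies $\lambda_\phi$ and $\Ll_\phi$ by the same factor and leaves $\lambda_\phi^{-1}\Ll_\phi$, $m_\phi$ and $\rho_\phi$ unchanged, so neither of the two families in the statement changes if we replace $\Pc(q,M,\Omega)$ by the compact family $\Pc_0(q,M,\Omega)$; on $\Pc_0(q,M,\Omega)$ we moreover have $\norm{\phi}_\infty\leq\Omega$. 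From now on all weights lie in $\Pc_0(q,M,\Omega)$.

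\emph{Equicontinuity.} First I would re-run the proof of Lemma~\ref{l:Ln-g-equi}, checking that every constant can be chosen independent of $\phi\in\Pc_0(q,M,\Omega)$ and of $g\in\Fc$. Approximating $g$ by a smooth $g'$ with $\norm{g-g'}_\infty\leq\eps$ and $\norm{g'}_{\Cc^2}$ bounded in terms of $\eps$ and the modulus of $\Fc$, the error $\norm{\lambda_\phi^{-n}\Ll_\phi^n(g-g')}_\infty\leq\norm{g-g'}_\infty\,\lambda_\phi^{-n}\rho_n^+(\phi)$ is $\lesssim\eps$ uniformly by Lemma~\ref{l:dependence}. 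For smooth $g$, the Gromov-graph estimate obtained there reads
\[
\abs{\ddc\,\lambda_\phi^{-n}\hat\Ll_n(g)}\ \lesssim\ \sum_{m=1}^{n}\eta(m)\,e^{m\Omega(\phi)}\,d^{-m}\,\omega_m\ \leq\ \sum_{m=0}^{\infty}\eta(m)\,e^{m\Omega}\,d^{-m}\,\omega_m,
\]
where the sub-exponential function $\eta$ of Lemma~\ref{l:ddch} depends on $\phi$ only through $\norm{\phi}_{\log^q}$ and $\norm{\phi}_\infty$, hence may be fixed over $\Pc_0(q,M,\Omega)$, and the bound $\hat\rho^\pm_{n-m}(\phi)\lesssim\lambda_\phi^{n-m}$ uses that $\theta_n(\phi)$ is uniformly bounded (Lemma~\ref{l:dependence}). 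Since $e^\Omega<d$, Lemma~\ref{l:ddc-dyn-log} then bounds $\norm{\lambda_\phi^{-n}\hat\Ll_n(g)}_{\log^p}$ uniformly in $n$ and $\phi$; the passage from $\hat\Ll_n$ to $\Ll_\phi^n$ is controlled, uniformly over $\Pc_0(q,M,\Omega)$, by the estimate $\norm{\phi-\phi_m}_\infty\lesssim m^{-2}$ of \eqref{e:phi-n} (which only sees $\norm{\phi}_{\log^q}\leq M$) and $\lambda_\phi^{-n}\rho_n^+(\phi)\lesssim1$. This yields a modulus of continuity for $\lambda_\phi^{-n}\Ll_\phi^n(g)$ independent of $n$, $\phi$ and $g$.

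\emph{Uniform convergence.} For the second assertion I would argue by contradiction and compactness; the target limit is, by Proposition~\ref{p:cv-m-phi}, the function $\langle m_\phi,g\rangle\,\rho_\phi$. If uniformity fails there are $\eps_0>0$, $n_j\to\infty$, $\phi_j\in\Pc_0(q,M,\Omega)$, $g_j\in\Fc$ and $a_j\in\P^k$ with $\abs{\lambda_{\phi_j}^{-n_j}\Ll_{\phi_j}^{n_j}(g_j)(a_j)-\langle m_{\phi_j},g_j\rangle\,\rho_{\phi_j}(a_j)}\geq\eps_0$. Passing to subsequences, $\phi_j\to\phi$ in $\Pc_0(q,M,\Omega)$, $g_j\to g$ uniformly (Arzel\`a-Ascoli), and $a_j\to a$. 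Put $h_j:=g_j-\langle m_{\phi_j},g_j\rangle\,\rho_{\phi_j}$; then $\langle m_{\phi_j},h_j\rangle=0$ by Lemma~\ref{l:def-mu-phi}, $h_j\to h:=g-\langle m_\phi,g\rangle\rho_\phi$ uniformly with $\langle m_\phi,h\rangle=0$ (continuity of $\phi\mapsto m_\phi,\rho_\phi$, Lemma~\ref{l:dependence}), and $\lambda_{\phi_j}^{-n_j}\Ll_{\phi_j}^{n_j}(g_j)-\langle m_{\phi_j},g_j\rangle\rho_{\phi_j}=\lambda_{\phi_j}^{-n_j}\Ll_{\phi_j}^{n_j}(h_j)$ since $\lambda_{\phi_j}^{-1}\Ll_{\phi_j}\rho_{\phi_j}=\rho_{\phi_j}$. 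By the equicontinuity just proved, the functions $\lambda_{\phi_j}^{-n_j}\Ll_{\phi_j}^{n_j}(h_j)$ lie in a uniformly bounded equicontinuous family, so a subsequence converges uniformly to some $l$ with $\abs{l(a)}\geq\eps_0$. Now fix $m\geq0$: since $\phi_j\to\phi$, $h_j\to h$ uniformly and $f$ has topological degree $d^k$, one checks by induction that $\Ll_{\phi_j}^m(h_j)\to\Ll_\phi^m(h)$ uniformly, hence $\lambda_{\phi_j}^{-m}\Ll_{\phi_j}^m(h_j)\to w_m:=\lambda_\phi^{-m}\Ll_\phi^m(h)$ uniformly. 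Writing $\lambda_{\phi_j}^{-n_j}\Ll_{\phi_j}^{n_j}(h_j)=\lambda_{\phi_j}^{-(n_j-m)}\Ll_{\phi_j}^{n_j-m}\big(\lambda_{\phi_j}^{-m}\Ll_{\phi_j}^m(h_j)\big)$ and using the uniform bound $\norm{\lambda_{\phi_j}^{-(n_j-m)}\Ll_{\phi_j}^{n_j-m}}_\infty\leq C$ (with $C$ independent of $j,m$, by Lemma~\ref{l:dependence}) on the difference $\lambda_{\phi_j}^{-m}\Ll_{\phi_j}^m(h_j)-w_m$, we obtain $l=\lim_j\lambda_{\phi_j}^{-(n_j-m)}\Ll_{\phi_j}^{n_j-m}(w_m)$ and hence $\norm{l}_\infty\leq C\norm{w_m}_\infty$ for every $m$. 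But $\norm{w_m}_\infty=\norm{\lambda_\phi^{-m}\Ll_\phi^m(h)}_\infty\to\abs{\langle m_\phi,h\rangle}\,\norm{\rho_\phi}_\infty=0$ as $m\to\infty$ by Proposition~\ref{p:cv-m-phi}, so $l\equiv0$, contradicting $\abs{l(a)}\geq\eps_0$.

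\emph{Main obstacle.} The substantive work is the first step: one must inspect the whole chain Lemma~\ref{l:ddch} $\Rightarrow$ Proposition~\ref{p:laplace-1-n} $\Rightarrow$ Lemma~\ref{l:Ln-g-equi} and verify that neither the implicit constants nor the function $\eta$ depend on $\phi$ beyond $\norm{\phi}_{\log^q}$ and $\Omega(\phi)$ — true, but requiring careful bookkeeping. Once the equicontinuity holds with a $\phi$-uniform modulus, the second part is soft; its only non-routine ingredient is the identity $l=\lim_j\lambda_{\phi_j}^{-(n_j-m)}\Ll_{\phi_j}^{n_j-m}(w_m)$, valid for each fixed $m$, which reduces the estimate on $l$ to the already-established fiberwise convergence $\lambda_\phi^{-m}\Ll_\phi^m(h)\to\langle m_\phi,h\rangle\rho_\phi$ of Proposition~\ref{p:cv-m-phi}.
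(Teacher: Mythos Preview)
Your proposal is correct. The reduction to $\Pc_0(q,M,\Omega)$ and the equicontinuity step match the paper exactly (the paper simply says this is ``clear from the proof of Lemma~\ref{l:Ln-g-equi}'', which amounts to the bookkeeping you describe).

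For the uniform convergence, your argument and the paper's diverge. The paper considers the limit set $\Fc_\infty'$ of the family $\lambda_\phi^{-n}\Ll_\phi^n(g)-\langle m_\phi,g\rangle\rho_\phi$, shows that any $l'\in\Fc_\infty'$ arising from $\phi_j\to\phi$ satisfies $\langle m_\phi,l'\rangle=0$, then produces preimages $l'_{-n}\in\Fc_\infty'$ with $l'=\lambda_\phi^{-n}\Ll_\phi^n(l'_{-n})$ and re-runs the $M$-on-$\supp(\mu)$ argument from the end of Proposition~\ref{p:cv-m-phi} to force $l'=0$. You instead factor off the \emph{first} $m$ iterates: from $\lambda_{\phi_j}^{-m}\Ll_{\phi_j}^m(h_j)\to w_m=\lambda_\phi^{-m}\Ll_\phi^m(h)$ uniformly and the uniform operator bound $\norm{\lambda_{\phi_j}^{-(n_j-m)}\Ll_{\phi_j}^{n_j-m}}_\infty\leq C$ you get $\norm{l}_\infty\leq C\norm{w_m}_\infty$, and then invoke the already-proved single-$\phi$ convergence of Proposition~\ref{p:cv-m-phi} to send $\norm{w_m}_\infty\to0$. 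This is a genuine simplification: it treats the $\phi$-uniform statement as a soft compactness consequence of the fiberwise one, rather than repeating the small-Julia-set argument. The paper's route, on the other hand, avoids the explicit verification that $\Ll_{\phi_j}^m\to\Ll_\phi^m$ uniformly on continuous functions (which you need and which is indeed elementary since the preimage sets are independent of $\phi$). A minor remark: the vanishing $\langle m_{\phi_j},h_j\rangle=0$ follows from $\langle m_{\phi_j},\rho_{\phi_j}\rangle=1$ (proof of Proposition~\ref{p:cv-m-phi}) rather than Lemma~\ref{l:def-mu-phi}.
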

\proof
As in Lemma \ref{l:dependence}, we can assume that $\phi\in \Pc_0(p,M,\Omega)$. 
The first assertion is clear from the proof of Lemma \ref{l:Ln-g-equi}.
We prove now the second assertion. 
By Lemma \ref{l:dependence}, $m_\phi$ belongs to a compact
family of probability measures. It follows that $|\langle m_\phi,g\rangle|$ is
bounded by a constant independent of $\phi$ and $g$.
It follows that the family 
$$\Fc_\N':=\big\{\lambda_\phi^{-n} \Ll_\phi^n(g)-\langle m_\phi,g\rangle
\ : \  
n\geq 0,\ \phi\in \Pc_0(p,M,\Omega), \ g\in\Fc\big\}$$ 
is uniformly bounded and equicontinuous. Denote by
$\Fc_\infty'$ the set of all functions $l'$ obtained as the limit of a sequence
$$h_j:=\lambda_{\phi_j}^{-n_j} \Ll_{\phi_j}^{n_j}(g_j)-\langle m_{\phi_j},g_j\rangle$$
in $\Fc_\N'$ with $n_j\to\infty$. By taking a subsequence,
we can assume that $\phi_j$ converges uniformly to some function 
$\phi\in \Pc_0(p,M,\Omega)$. Since $\langle m_{\phi_j},h_j\rangle =0$,
we also obtain that $\langle m_\phi,l'\rangle=0$ by the continuity of $\phi\mapsto m_\phi$.
Now, as in the end of the proof of Proposition
\ref{p:cv-m-phi}, we obtain that $l'=\lam_\phi^{-n} \Ll_\phi^n(l'_{-n})$ for some $l_{-n}'\in\Fc_\infty'$ and then deduce that 
$l'=0$. The lemma follows.
\endproof

\subsection{Pressure and uniqueness of the equilibrium state}
\label{s:proof-t:main-further}
Using the results in the previous section, 
to  prove the 
next proposition 
we only need to follow the arguments in
\cite[Sections 6 and 7]{urbanski2013equilibrium} and \cite[Section 5.6]{przytycki2010conformal}.

\begin{proposition}\label{p:pu-uz}
The probability measure $\mu_\phi$
is a unique 
equilibrium state associate to $\phi$. Moreover, the pressure $P(\phi)$ is equal to $\log \lam$.
\end{proposition}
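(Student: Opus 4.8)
The plan is to follow the standard thermodynamic-formalism scheme of \cite[Sections 6--7]{urbanski2013equilibrium} and \cite[Section 5.6]{przytycki2010conformal}, built around the operator $L$, the conformal measure $m_\phi$, and the measure $\mu_\phi=\rho m_\phi$ introduced in Section \ref{s:equidistributions-L}; the key inputs, namely the conformality $\Ll^\ast m_\phi=\lambda m_\phi$, the identity $\lambda^{-1}\Ll\rho=\rho$, and the uniform convergence $L^n g\to\langle\mu_\phi,g\rangle$ of Lemma \ref{l:def-mu-phi}, are already available. First I would record the Jacobian of $\mu_\phi$ with respect to $f$: testing $\Ll^\ast m_\phi=\lambda m_\phi$ against $e^{-\phi}\1_A$ for a Borel set $A$ on which $f$ is injective gives $m_\phi(f(A))=\lambda\int_A e^{-\phi}\,dm_\phi$, and since $\mu_\phi=\rho m_\phi$ with $\rho$ continuous and strictly positive a change of variables yields
\[
\jac_{\mu_\phi}f=\lambda\, e^{-\phi}\,(\rho\circ f)/\rho=e^{-\psi},\qquad \psi:=\phi+\log\rho-\log(\rho\circ f)-\log\lambda .
\]
A direct computation from $\lambda^{-1}\Ll\rho=\rho$ shows $\Ll_\psi\1\equiv\1$ (equivalently $\Ll_\psi=L$), so in particular $\psi\le 0$. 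Since $\psi$ differs from $\phi$ by a continuous coboundary and a constant, $P(\psi)=P(\phi)-\log\lambda$ and $\phi$ and $\psi$ have the same equilibrium states; it thus suffices to prove that $P(\psi)=0$ and that $\mu_\phi$ is the unique equilibrium state of $\psi$.

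For the upper bound $P(\psi)\le 0$, suppose an $f$-invariant probability $\nu$ satisfied $\Ent{f}{\nu}+\langle\nu,\psi\rangle>0$, i.e. $\Ent{f}{\nu}+\langle\nu,\phi\rangle>\log\lambda$. Using $\log\lambda\ge k\log d+\min\phi$ and $\langle\nu,\phi\rangle\le\max\phi$ one gets $\Ent{f}{\nu}>\log\lambda-\max\phi\ge k\log d-\Omega(\phi)>(k-1)\log d$, so all Lyapunov exponents of $\nu$ are strictly positive (cf.\ Proposition \ref{p:large-entropy}), whence $\nu$ admits a Jacobian $\jac_\nu f$ and the partition into domains of local injectivity is a one-sided generator. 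Rokhlin's inequality and $\log t\le t-1$ then give
\[
\Ent{f}{\nu}+\langle\nu,\psi\rangle\le\int\log\big(\jac_\nu f\cdot e^{\psi}\big)\,d\nu\le\int\big(\jac_\nu f\cdot e^{\psi}-1\big)\,d\nu=0,
\]
the last equality coming from the defining property of the Jacobian together with $\Ll_\psi\1=\1$, namely $\int \jac_\nu f\cdot e^{\psi}\,d\nu=\int\big(\sum_{f(x)=y}e^{\psi(x)}\big)\,d\nu(y)=\int\Ll_\psi\1\,d\nu=1$. This contradiction yields $P(\psi)\le 0$. For the reverse inequality, the measure $\mu_\phi$ has entropy $>(k-1)\log d$ by the same computation and hence positive Lyapunov exponents, so Rokhlin's formula holds as an equality for $\mu_\phi$ and $\Ent{f}{\mu_\phi}=\int\log(\jac_{\mu_\phi}f)\,d\mu_\phi=-\langle\mu_\phi,\psi\rangle$, i.e. $\Ent{f}{\mu_\phi}+\langle\mu_\phi,\psi\rangle=0$. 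Therefore $P(\psi)=0$, $P(\phi)=\log\lambda$, and $\mu_\phi$ is an equilibrium state.

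For uniqueness, let $\nu$ be any equilibrium state of $\phi$, hence of $\psi$, so $\Ent{f}{\nu}+\langle\nu,\psi\rangle=0$; as above $\Ent{f}{\nu}>(k-1)\log d$, so $\nu$ has positive Lyapunov exponents, Rokhlin's formula is an equality for $\nu$, and the chain of inequalities displayed above collapses to a chain of equalities. Equality in $\log t\le t-1$ with $t=\jac_\nu f\cdot e^{\psi}$ forces $\jac_\nu f=e^{-\psi}$ $\nu$-almost everywhere, and then the defining property of the Jacobian reads $\langle\nu,\Ll_\psi g\rangle=\langle\nu,g\rangle$ for every continuous $g$, i.e. $L^\ast\nu=\nu$. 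Since $L^n g\to\langle\mu_\phi,g\rangle$ uniformly by Lemma \ref{l:def-mu-phi}, we conclude $\langle\nu,g\rangle=\langle\nu,L^n g\rangle\to\langle\mu_\phi,g\rangle$ for every continuous $g$, hence $\nu=\mu_\phi$.

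The step I expect to be the main obstacle is the justification of Rokhlin's formula in this setting: both the Rokhlin inequality, which requires the existence of a measurable Jacobian for an invariant measure with positive Lyapunov exponents, and, for $\mu_\phi$ and for equilibrium states, the equality case, which requires the partition into domains of local injectivity to be a one-sided generator --- a property that follows from the contraction of inverse branches at $\mu_\phi$-almost every point. These rest in turn on the implication ``$\Ent{f}{\nu}>(k-1)\log d\Rightarrow$ all Lyapunov exponents are positive'', for which one invokes the results surrounding Proposition \ref{p:large-entropy}; apart from these classical points, the argument only uses material already established above.
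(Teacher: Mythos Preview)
Your argument is correct but follows a genuinely different route from the paper. Both compute $\jac_{\mu_\phi}f=\lambda e^{-\phi}(\rho\circ f)/\rho$ and use it to show $\mu_\phi$ realizes the pressure, but for the upper bound $P(\phi)\le\log\lambda$ the paper invokes Gromov's volume estimate on $\Gamma_n\subset(\P^k)^{n+1}$ (a purely topological bound, cf.\ \cite[Th.~6.1]{urbanski2013equilibrium}), whereas you bound each $\Ent{f}{\nu}+\langle\nu,\psi\rangle$ via the Rokhlin formula combined with $\log t\le t-1$, after first forcing $\Ent{f}{\nu}>(k-1)\log d$ to access de~Th\'elin's theorem. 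For uniqueness the paper proves differentiability of $t\mapsto P(\phi+t\psi)$ for a dense family of $\psi$ (using the uniform-in-weight convergence of Lemma~\ref{l:dependence-bis}) and then quotes the tangent-functional criterion \cite[Cor.~3.6.7]{przytycki2010conformal}; your approach is more direct and bypasses Lemmas~\ref{l:dependence}--\ref{l:dependence-bis} entirely, using only the single-weight convergence $L^ng\to\langle\mu_\phi,g\rangle$ of Lemma~\ref{l:def-mu-phi} together with equality in $\log t\le t-1$ to conclude $L^*\nu=\nu$. The price is that you need the Rokhlin formula as an \emph{equality} for every equilibrium state $\nu$ (the paper only uses Parry's inequality $\Ent{f}{\mu_\phi}\ge\int\log J_{\mu_\phi}\,d\mu_\phi$ and gets equality by sandwiching against Gromov's bound), so your flagged obstacle---that positive Lyapunov exponents make the partition into injectivity domains a one-sided generator---is a genuine extra input; it is available via Pesin theory for non-invertible maps but is less elementary than what the paper uses. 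One terminological slip: what you call ``Rokhlin's inequality'' in your displayed line is actually being used as an equality, since the general inequality goes the other way ($h_\nu(f)\ge\int\log\jac_\nu f\,d\nu$).
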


\begin{proof}
We follow the approach in \cite[Th. 5.6.5]{przytycki2010conformal}. 
To simplify the notation,
set $S_n (g) :=\sum_{j=0}^{n-1} g \circ f^j $
 for any function $g \colon \P^k \to \R$.
 Recall that, given $\phi'\colon \P^k \to \R$ with  $\norm{\phi'}_{\log^q}<\infty$ and $\Omega (\phi')<\log d$, 
 we 
 denote by $\lam_{\phi'},\rho_{\phi'}$
 the objects 
 associated to $\Ll_{\phi'}$.

\medskip\noindent
{\bf Claim 1.} We have $\Ent{f}{\mu_{\phi'}} + \sca{\mu_{\phi'}, \phi'} =P(\phi') = \log \lambda_{\phi'}$ for all $\phi'\colon \P^k \to \R$
such that $\norm{\phi'}_{\log^q}<\infty$ and $\Omega (\phi')<\log d$.
\begin{proof}[Proof of Claim 1]
The proof of the 
inequality $P(\phi')\leq \log \lambda_{\phi'}$
is an adaptation of
 Gromov's proof of the fact
that the topological entropy of $f$ is bounded above by $k \log d$, see \cite{gromov2003entropy}. 
We refer to \cite[Th.\ 6.1]{urbanski2013equilibrium} for the complete details. To complete the proof, it
is enough to show that $\Ent{f}{\mu_{\phi'}} + \sca{\mu_{\phi'}, \phi'} \geq \log \lambda_{\phi'}$.

It follows from \cite{parry1969entropy} 
that $\Ent{f}{\mu_{\phi'}}\geq \big\langle\mu_{\phi'}, \log J_{\mu_{\phi'}}\big\rangle$, 
where
$J_{\mu_{\phi'}}$ is defined as
the Radon-Nikodym derivative of $f^* \mu_{\phi'}$ with respect to $\mu_{\phi'}$
(when this derivative exists).
In our setting, it follows
from a straightforward computation that $J_{\mu_{\phi'}}$ is well defined and given by
\[
J_{\mu_{\phi'}} =
\lambda_{\phi'} 
\rho_{\phi'}^{-1} e^{-\phi'} ( \rho_{\phi'} \circ f).
\]
Indeed, denoting by $J'$ the RHS in the above expression,
for every continuous function $g \colon \P^k \to \R$, we have
\[\begin{aligned}
\big\langle
\mu_{\phi'}, J' g\big\rangle 
& = \big\langle
\lambda m_{\phi'} , e^{-\phi'} (\rho_{\phi'} \circ f) g\big\rangle
=\big\langle\Ll_{\phi'}^* m_{\phi'} ,  e^{-\phi'} (\rho_{\phi'} \circ f)g\big\rangle
=\big\langle m_{\phi'}, \Ll_{\phi'} ( e^{-\phi'} (\rho_{\phi'} \circ f) g)\big\rangle\\
& =\big\langle m_{\phi'}, \rho_{\phi'}  \Ll_{\phi'} (e^{-\phi'} g) \big\rangle
= \big\langle \mu_{\phi'}, f_* g \big\rangle = \big\langle f^* \mu_{\phi'}, g \big\rangle,
\end{aligned}\]
which proves that $J'=J_{\mu_{\phi'}}$.
We then have, using the $f_*$-invariance of $\mu_{\phi'}$,
\[
\Ent{f}{\mu_{\phi'}} + \sca{\mu_{\phi'}, \phi'}
 \geq 
\big\langle
\mu_{\phi'}, \log J_{\mu_{\phi'}}\big\rangle 
+  \sca{\mu_{\phi'}, \phi'}
=
\sca{\mu_{\phi'}, \log (\rho_{\phi'}\circ f) - \log \rho_{\phi'}} 
+ \log \lam_{\phi'}
 = \log \lambda_{\phi'}
\]
and the proof is complete.
\end{proof}

\medskip\noindent
{\bf Claim 2.} 
Let $M$ and $\Omega$
be positive real numbers such that 
 $\Omega < \log d$, and $g\colon \P^k \to \R$ a continuous function. 
Then, for every $y \in \P^k$, we have
\begin{equation}\label{e:c:0}
\frac{1}{n} \frac{\sum_{f^n (x)=y}  
S_n (g) (x)
 e^{S_n (\phi') (x) } }{\Ll^n_{\phi'} \1  (y)}
\to \sca{\mu_{\phi'}, g}
\end{equation}
where the convergence is uniform on 
 $\phi' \in \Pc (q,M,\Omega)$.

\begin{proof}[Proof of Claim 2] 
Observe that the LHS of \eqref{e:c:0} is equal to
\[
\frac{1}{n} \frac{\lam_{\phi'}^{-n} \sum_{f^n (x)=y}  
S_n (g) (x)
 e^{S_n (\phi') (x) } }{\lam^{-n}_{\phi'}\Ll^n_{\phi'} \1  (y)}.
\]
The denominator of the last quotient converges to $\rho_{\phi'}(y)$ and 
the numerator satisfies
\begin{equation}\label{e:c:1}
\lam_{\phi'}^{-n}
\sum_{f^n (x)=y} 
S_n (g) (x)
 e^{
S_n (\phi') (x) 
 } =
 \lam_{\phi'}^{-n}
 \sum_{j=0}^{n-1} \Ll_{\phi'}^n (g \circ f^j) (y)
 =
 \lam_{\phi'}^{-j}
 \sum_{j=0}^{n-1} \lam_{\phi'}^{j-n} \Ll_{\phi'}^{n-j}
  (g \cdot \Ll^j_{\phi'}\1 ) (y).
\end{equation}
 It follows
from Lemma 
\ref{l:dependence-bis} 
that
\begin{equation}\label{e:c:2}
\lam_{\phi'}^{j-n} \Ll_{\phi'}^{n-j} (g \cdot \Ll^j_{\phi'} \1 ) \to 
\big\langle
m_{\phi'}, g \cdot \Ll_{\phi'}^j \1
\big\rangle 
\rho_{\phi'}
\end{equation}
as $n-j \to \infty$, where the convergence is uniform on
$\phi' \in \Pc (q',M,\Omega)$.
We deduce from \eqref{e:c:1}, \eqref{e:c:2},
and the fact that $\lam_{\phi'}^{-j}\Ll_{\phi'}^{j} \1 \to \rho_{\phi'}$ 
as $j\to \infty$ 
 that, as $n\to \infty$,
  the LHS in \eqref{e:c:0} tends to
\[ 
\lim_{n\to \infty}
\frac{1}{n}
\sum_{j=0}^{n-1}
\lam_{\phi'}^{-j} \big\langle
m_{\phi'}, g \cdot  \Ll^j_{\phi'} \1 \big\rangle 
= \sca{m_{\phi'}, g \cdot \rho_{\phi'}} 
= \sca{\mu_{\phi'}, g}.\]
The proof is complete.
\end{proof}

\medskip\noindent
{\bf Claim 3.} For every 
$\psi\colon \P^k \to \R$
 such that $\norm{\psi}_{\log^{q}}<\infty$
the function $t \mapsto P(\phi + t \psi)$
is differentiable
in a neighbourhood of $0$.
\begin{proof}[Proof of Claim 3]  Fix $y \in \P^k$ and 
set
\[
P_n (t) :=
\frac{1}{n}\log \Ll^n_{\phi + t \psi} \1 (y)
\quad  \mbox{ and }\quad  \quad Q_n(t) := \frac{d}{dt} P_n (t)
= \frac{1}{n}
\frac{
\sum_{f^n (x)=y}  
S_n (\psi) (x)
 e^{S_n (\phi+t\psi) (x) }
}{ \Ll^n_{\phi + t \psi} \1 (y)}.
\]
Notice that $\Omega (\phi + t \psi)<\log d$ for $t$ sufficiently small.
A direct computation and Claim 2 
(applied with $\phi+t\psi, \psi$
 instead of $\phi',g$)
imply that $Q_n (t) \to \sca{\mu_{\phi + t \psi},\psi} $ as $n\to \infty$, 
locally uniformly with respect 
to $t$. We also have $P_n (t) \to \log \lambda_{\phi+t\psi}= P(\phi+ t \psi)$, where
the convergence follows from
Lemma \ref{l:bound-omega-lam}
and 
the equality from Claim 1 applied with $\phi'$ instead of $\phi+t\psi$.
We deduce that the pressure function $P$, in a neighbourhood of $t=0$,
is the uniform limit of the $\Cc^1$ functions $P_n (t)$,
whose derivatives $Q_n (t)$
 are also uniformly convergent. Thus, the function
$P$
is differentiable in a neighbourhood of
$t=0$, with derivative at $t$ equal to $\sca{\mu_{\phi+t\psi},  \psi}$.
\end{proof}

It follows from Claim 1 
 that $\mu_\phi$ is an equilibrium state.
By \cite[Cor.\ 3.6.7]{przytycki2010conformal},
the fact that 
the pressure function $t \mapsto P(\phi + t \psi)$ is differentiable
 at $t=0$ with respect to a dense set of continuous functions $\psi$
   implies the uniqueness of the equilibrium state for the weight $\phi$. Since this property holds
 by Claim 3 for all $\psi$
 such that $\norm{\psi}_{\log^q}<\infty$, 
 the proof is complete. 
\end{proof}

 In the second part of this work, we
will prove
 that, when $\phi$ and $\psi$
are H\"older continuous, the pressure function $P(t)$ defined above is actually analytic,
see  \cite[Theorem 1.3]{bd-eq-states-part2}.

We conclude this section with the following properties of the
equilibrium
state $\mu_\phi$
that
we will use in the next section.

\begin{proposition}\label{p:large-entropy}
Under the assumptions of Theorem \ref{t:main}, 
the metric entropy $\Ent{f}{\mu_\phi}$
of $\mu_\phi$ is strictly larger than $(k-1)\log d$.
 In particular, $\mu_\phi$ has no mass on proper analytic subsets of $\P^k$, its Lyapunov
  exponents are strictly positive and at least equal to ${1\over 2}(\Ent{f}{\mu_\phi} -  (k-1)\log d)$,
  and the function $\log |\jac Df|$ is integrable with respect to $\mu_\phi$.
   Moreover, 
  the Hausdorff dimension of $\mu_\phi$ satisfies
  \[
\dim_H  (\mu_\phi) \geq \frac{ (k-1)\log d}{\lam_1} + \frac{\Ent{f}{\mu_\phi} -  (k-1)\log d}{\lam_k}\cdot
  \]
\end{proposition}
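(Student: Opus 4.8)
The whole statement reduces to a single new inequality, namely $\Ent{f}{\mu_\phi}>(k-1)\log d$; every other assertion then follows formally from this, together with the ergodicity of $\mu_\phi$ (it is mixing by Proposition~\ref{p:mixing}) and classical results on the dynamics of endomorphisms of $\P^k$. To prove the entropy bound, recall from Proposition~\ref{p:pu-uz} that $\log\lam=P(\phi)=\Ent{f}{\mu_\phi}+\langle\mu_\phi,\phi\rangle$, so $\Ent{f}{\mu_\phi}=\log\lam-\langle\mu_\phi,\phi\rangle\ge\log\lam-\max\phi$. On the other hand, $f^{-n}(y)$ consists of exactly $d^{kn}$ points counted with multiplicity, hence $\rho^-_n=\min_y\Ll^n\1(y)\ge d^{kn}e^{n\min\phi}$ and therefore $\lam=\lim_n(\rho^-_n)^{1/n}\ge d^k e^{\min\phi}$. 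Combining these, $\Ent{f}{\mu_\phi}\ge k\log d+\min\phi-\max\phi=k\log d-\Omega(\phi)>(k-1)\log d$, since $\Omega(\phi)<\log d$ by hypothesis.

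The remaining properties I would deduce from the entropy bound by invoking de Th\'elin's theorem on Lyapunov exponents: for any ergodic invariant probability measure $\nu$ of a degree-$d$ holomorphic endomorphism of $\P^k$ with $\Ent{f}{\nu}>(k-1)\log d$, the Lyapunov exponents of $\nu$ (well defined in $[-\infty,\log\|Df\|_\infty]$ by Oseledets' theorem) all satisfy $\lambda_i\ge\tfrac12\big(\Ent{f}{\nu}-(k-1)\log d\big)>0$. Applying this to $\nu=\mu_\phi$ gives the asserted lower bound on the exponents; in particular they are finite, which via Oseledets' theorem means that $n^{-1}\log|\jac Df^n|$ converges $\mu_\phi$-a.e.\ to a finite constant. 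Since $\log|\jac Df|$ is continuous and thus bounded above on $\P^k$, Birkhoff's ergodic theorem applied separately to the positive and negative parts of $\log|\jac Df|$ then forces $\log|\jac Df|\in L^1(\mu_\phi)$. For the absence of mass on proper analytic subsets I would argue by contradiction: if $\mu_\phi(V)>0$ for some proper analytic $V\subset\P^k$, then by ergodicity $\mu_\phi$ is carried by $\bigcup_{n\ge 0}f^{-n}(V)$, and a standard reduction to a forward-invariant analytic subset of dimension $p\le k-1$, on which the topological entropy of $f$ is at most $p\log d$, yields $\Ent{f}{\mu_\phi}\le(k-1)\log d$; alternatively one cites directly that an ergodic measure of entropy $>(k-1)\log d$ charges no proper analytic subset.

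For the Hausdorff dimension I would apply a Ledrappier--Young / Ma\~n\'e type lower bound for ergodic measures with positive Lyapunov exponents. Ordering the exponents decreasingly as $\lambda_1\ge\cdots\ge\lambda_k>0$, this bound reads $\dim_H(\mu_\phi)\ge\sum_{i=1}^k\gamma_i/\lambda_i$, where the partial entropies $\gamma_i$ satisfy $\sum_i\gamma_i=\Ent{f}{\mu_\phi}$ and $0\le\gamma_i\le\log d$ (the upper bound reflecting the dynamical degrees $\delta_j(f)=d^j$), the previously established absence of mass on analytic subsets being what makes the $\gamma_i$ well behaved. Minimizing $\sum_i\gamma_i/\lambda_i$ under these constraints, the optimum is $\gamma_1=\cdots=\gamma_{k-1}=\log d$ and $\gamma_k=\Ent{f}{\mu_\phi}-(k-1)\log d$ (which lies in $(0,\log d]$ by the entropy bound), so, using $\lambda_i^{-1}\ge\lambda_1^{-1}$ for $i\le k-1$,
\[
\dim_H(\mu_\phi)\ \ge\ \log d\sum_{i=1}^{k-1}\frac1{\lambda_i}+\frac{\Ent{f}{\mu_\phi}-(k-1)\log d}{\lambda_k}\ \ge\ \frac{(k-1)\log d}{\lambda_1}+\frac{\Ent{f}{\mu_\phi}-(k-1)\log d}{\lambda_k},
\]
which is the claimed estimate; for $k=1$ it reduces to Ma\~n\'e's formula $\dim_H\mu_\phi\ge\Ent{f}{\mu_\phi}/\lambda_1$.

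The main obstacle here is not a computation: the entropy bound is immediate from $P(\phi)=\log\lam$ and the crude estimate $\lam\ge d^k e^{\min\phi}$, and the rest is bookkeeping with quoted theorems. The delicate point is to check that these external results — de Th\'elin's hyperbolicity theorem, the statement that large-entropy ergodic measures give no mass to proper analytic subsets, and the Ledrappier--Young / Ma\~n\'e type dimension lower bound together with the degree constraints $\gamma_i\le\log d$ — apply verbatim in the present setting; this holds precisely because all of their hypotheses are subsumed by the facts that $\mu_\phi$ is ergodic and that $\Ent{f}{\mu_\phi}>(k-1)\log d$, which we obtain from the entropy bound and Proposition~\ref{p:mixing}.
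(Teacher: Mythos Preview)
Your proof is correct and follows essentially the same route as the paper: reduce everything to the entropy inequality $\Ent{f}{\mu_\phi}\ge k\log d-\Omega(\phi)>(k-1)\log d$, then invoke de~Th\'elin \cite{de2008exposants} and Dupont \cite{dupont2012large,dupont2011dimension} for the Lyapunov exponents, the integrability of $\log|\jac Df|$, and the Hausdorff dimension bound, and use the standard topological-entropy argument on a minimal invariant analytic subset for the no-mass statement. The only (minor) difference is in how you obtain the entropy bound: you go through $P(\phi)=\log\lambda$ and the elementary estimate $\lambda\ge d^k e^{\min\phi}$, whereas the paper compares $\mu_\phi$ directly with the measure of maximal entropy $\mu$ in the variational principle, using $\Ent{f}{\mu}=k\log d$; both arguments yield the identical inequality $\Ent{f}{\mu_\phi}\ge k\log d-\Omega(\phi)$.
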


\proof
Since $\mu_\phi$ maximizes the pressure and $\Ent{f}{\mu}=k\log d$, we have 
$$\Ent{f}{\mu_\phi} +\langle\mu_\phi,\phi\rangle \geq \Ent{f}{\mu}+\langle\mu,\phi\rangle \geq k\log d+ \min \phi.$$
Since by assumption we have $\Omega (\phi)<\log d$, it follows that 
$$\Ent{f}{\mu_\phi} \geq k\log d +\min\phi- \langle\mu_\phi,\phi\rangle  \geq k\log d -\Omega(\phi)>(k-1)\log d.$$
The Lyapunov exponents
of every ergodic invariant probability measure satisfying this property
are bounded below as in the statement, and in particular
the function $\log |\jac|$ is integrable
with respect to it,
see de Th\'elin \cite{de2008exposants} and Dupont \cite{dupont2012large}.
The bound on the Hausdorff dimension 
of $\mu_\phi$  is then a consequence of \cite{dupont2011dimension}, see also \cite{de2015measures}.

Let now $X$ be a proper analytic subset of $\P^k$. Assume by contradiction
that $m:=\mu_\phi(X)>0$. We choose such an $X$ which is irreducible and
of minimal dimension $p$. So, for all $n\geq 0$,  $f^n(X)$ is also an irreducible analytic set of dimension $p$.
We have 
\[\mu_\phi(f^n(X))=\mu_\phi(f^{-n}(f^n(X)))\geq \mu_\phi(X)=m.\]
It follows that $\mu_\phi ( f^n(X)\cap f^{n'}(X)) >0$ for some $n'>n\geq 0$. 
The minimality of the dimension $p$ implies that $f^n(X)=f^{n'}(X)$.

Replacing
$X,f$, and $\phi$ 
by
$f^n(X), f^{n'-n}$, and $\phi+\cdots+\phi\circ f^{n'-n-1}$
 we can assume that $X$ is
invariant and $\mu_\phi(X)>0$. Since $\mu_\phi$ is mixing, it is ergodic. We then deduce that $\mu_\phi(X)=1$.
Therefore, the metric entropy of $\mu_\phi$ is smaller than the topological 
entropy of $f$ on $X$. But this is a contradiction because the last one is at
most equal to $p\log d$, see \cite[Th.\,1.108 and Ex.\,1.122]{dinh2010dynamics}. The result follows.
\endproof

\subsection{Equidistribution of periodic points and end of the proof of Theorem \ref{t:main}}\label{s:pern}

Because of Proposition \ref{p:cv-m-phi}, 
Corollary \ref{c:equi-preim}, Lemma \ref{l:def-mu-phi}, 
and Propositions
 \ref{p:mixing} and 
 \ref{p:pu-uz}, to prove Theorem
\ref{t:main}
it only remains to establish
the equidistribution of (weighted) repelling periodic points of period
$n$ with respect to $\mu_\phi$, as $n\to \infty$.

\begin{theorem}\label{t:equidistr-pern}
Let $f\colon \P^k \to \P^k$
 be a holomorphic endomorphism
 of $\P^k$
  of algebraic
  degree $d\geq 2$ and satisfying Assumption {\bf (A)}. Let $\phi\colon \P^k \to \R$
 satisfy 
 $\norm{\phi}_{\log^q}<\infty$ for some $q>2$ and $\Omega (\phi)<\log d$.
  Let $\mu_\phi$ be the
 unique equilibrium state associated to $\phi$, and $\lam$ the scaling ratio. 
Then
for every $n\in \N$
there exists a
set $P'_n$
 of 
  repelling periodic
points of period $n$
 in the small Julia set
 such that
 \begin{equation}\label{eq:pern:goal}
\lim_{n\to \infty}
 \lam^{-n} \sum_{y \in P'_n} e^{\phi(y) + \phi(f(y)) + \dots + \phi (f^{n-1} (y))} \delta_y
 = \mu_\phi.
 \end{equation}
 \end{theorem}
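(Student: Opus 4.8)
The plan is to adapt to the weighted setting the strategy of Briend--Duval \cite{briend1999exposants} (and of Lyubich \cite{lyubich1983entropy} for $k=1$), whose two key inputs become here the equidistribution of weighted preimages (Corollary \ref{c:equi-preim}) and the information on $\mu_\phi$ from Propositions \ref{p:mixing} and \ref{p:large-entropy}: $\mu_\phi$ is mixing, all its Lyapunov exponents are strictly positive, and it charges no proper analytic subset of $\P^k$; recall also $\mu_\phi=\rho m_\phi$, $\supp(\mu_\phi)=\supp(\mu)$, $\langle m_\phi,\rho\rangle=1$, and $\lambda\ge d^k e^{\min\phi}$. Write $S_n(\phi):=\phi+\phi\circ f+\dots+\phi\circ f^{n-1}$. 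I would first reduce to a local statement: it suffices to show that for every $x_0\in\supp(\mu)$, every $\epsilon>0$, all small enough $r$ with $m_\phi\big(\partial\B_{\P^k}(x_0,r)\big)=0$, and all $n$ large, there is a set $R_n=R_n(x_0,r)$ of repelling periodic points of exact period $n$ contained in $B:=\B_{\P^k}(x_0,r)\cap\supp(\mu)$ such that the measure $\lambda^{-n}\sum_{y\in R_n}e^{S_n(\phi)(y)}\delta_y$ is $\epsilon$-close, in total mass and weakly against a fixed finite family of test functions, to $\rho(x_0)\,m_\phi|_B$. Covering $\supp(\mu)$ by finitely many such balls with essentially disjoint interiors and $\sum_i\rho(x_{0,i})m_\phi(B_i)$ close to $\langle m_\phi,\rho\rangle=1$, and setting $P'_n:=\bigcup_i R_n(x_{0,i},r_i)$, one obtains that $\nu_n:=\lambda^{-n}\sum_{y\in P'_n}e^{S_n(\phi)(y)}\delta_y$ is uniformly bounded with every weak-$*$ limit equal to $\mu_\phi$, which is \eqref{eq:pern:goal}.

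For the local statement I would work with inverse branches of $f^n$ on a slightly larger ball $B'\supset B$. By Corollary \ref{c:equi-preim}, $\lambda^{-n}\sum_{f^n(a)=x_0,\,a\in B}e^{S_n(\phi)(a)}\to\rho(x_0)m_\phi(B)$. Following Briend--Duval, call a branch $g$ of $f^n$ on $B'$ \emph{good} if it is univalent, satisfies $g(B')\Subset B$, and is contracting on $B'$; each good $g$ then has a unique fixed point $z_g\in B$, which is a repelling periodic point of period dividing $n$ and lies in $\supp(\mu)$ (it sits close to $f^{-n}(x_0)\subset\supp(\mu)$ and, for $r$ small, away from the exceptional set of $f$, which under {\bf (A)} does not meet $\supp(\mu)$). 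Since $f^j\circ g$ is a univalent branch of $(f^{n-j})^{-1}$ on $B'$ for every $0\le j\le n$, the distortion controls {\bf (A2)}--{\bf (A2')} give $\dist\big(f^j(z_g),f^j(g(x_0))\big)\lesssim r^{1/\kappa^{\,n-j}}$, and combining this with the $\log^q$-continuity of $\phi$ yields $\big|S_n(\phi)(z_g)-S_n(\phi)(g(x_0))\big|\lesssim\norm{\phi}_{\log^q}(\log^\star r)^{-q}$, uniformly in $n$ and in the good branches; hence $e^{S_n(\phi)(z_g)}=(1+o_r(1))\,e^{S_n(\phi)(g(x_0))}$. As distinct good branches produce distinct $z_g$, with $z_g$ and $g(x_0)$ both in $B$, this already gives the \emph{upper} estimate $\limsup_n\lambda^{-n}\sum_{g\ \mathrm{good}}e^{S_n(\phi)(z_g)}\le(1+\epsilon)\,\rho(x_0)m_\phi(B)$ after shrinking $B'$ to $B$, together with the weak domination of the corresponding empirical measures by $\rho(x_0)\,m_\phi|_{\overline B}$.

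The matching \emph{lower} bound is the heart of the proof and the place where the weighted case genuinely exceeds \cite{briend1999exposants}: one must show that the branches $g$ with $g(x_0)\in B$ that are \emph{not} good carry negligible weighted mass, i.e.\ $\lambda^{-n}\sum_{g\ \mathrm{bad}}e^{S_n(\phi)(g(x_0))}\le\epsilon\,\rho(x_0)m_\phi(B)$ for $n$ large. I would split the bad branches into (i) those not univalent on $B'$, which correspond to preimages whose orbit enters a fixed neighbourhood $W$ of $\Crit(f)$, and (ii) univalent but insufficiently contracting ones. For (i) the key is that $m_\phi$, hence $\mu_\phi$, puts no mass on the hypersurface $\Crit(f)$; together with a quantitative version of this and a Borel--Cantelli/counting argument in the spirit of Briend--Duval and \cite{dinh2010equidistribution}, the weighted sums being controlled through transfer-operator identities involving $\Ll^j(\1_W\,\Ll^{n-j}\1)$ and Lemma \ref{l:dependence-bis}, with the needed decay rates furnished by {\bf (A1)}--{\bf (A2')} and the $\log^q$-continuity of $\phi$. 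For (ii) I would invoke Proposition \ref{p:large-entropy}: since $\mu_\phi$ is ergodic with strictly positive Lyapunov exponents, the Oseledets theorem and a Pliss-type argument produce a full-density set of expanding times at $\mu_\phi$-a.e.\ point, and transporting this genericity to the weighted preimages of $x_0$ via Corollary \ref{c:equi-preim} shows that the weighted proportion of non-contracting branches tends to $0$. Combining (i) and (ii) gives $\lambda^{-n}\sum_{g\ \mathrm{good}}e^{S_n(\phi)(z_g)}=(1+o(1))\,\rho(x_0)m_\phi(B)$ and, in localized form, the stated convergence of the empirical measures. Finally, restricting $R_n$ to points of exact period $n$ leaves the limit unchanged: the $z_g$ of period $m\mid n$ with $m<n$ (hence $m\le n/2$) are geometrically sparse as in \cite{briend1999exposants}, and their weighted contribution is further tamed by the uniform bound $S_m(\phi)(y)\le m(\log\lambda-c_0)$ with $c_0>0$, valid on all periodic orbit measures because $\mu_\phi$ is the \emph{unique} equilibrium state and has positive entropy.
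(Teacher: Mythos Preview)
Your sketch has a genuine gap in the distortion estimate, and this gap propagates to the lower bound.

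You claim that {\bf (A2)}--{\bf (A2')} give $\dist\big(f^j(z_g),f^j(g(x_0))\big)\lesssim r^{1/\kappa^{\,n-j}}$ and then $\big|S_n(\phi)(z_g)-S_n(\phi)(g(x_0))\big|\lesssim \|\phi\|_{\log^q}(\log^\star r)^{-q}$ uniformly in $n$. But with that distance bound the sum is
\[
\sum_{j=0}^{n-1}\big(\log^\star r^{1/\kappa^{\,n-j}}\big)^{-q}
=\sum_{l=1}^{n}\big(1+\kappa^{-l}|\log r|\big)^{-q},
\]
and for $l$ larger than about $(\log\kappa)^{-1}\log|\log r|$ each term exceeds $1/2$; the sum therefore grows linearly in $n$ and is \emph{not} bounded. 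Your notion of ``good'' (univalent, $g(B')\Subset B$, contracting) controls only the endpoints, not the intermediate images $f^l(g(B'))$. To get a summable distortion you need exponential contraction at \emph{every} level, i.e.\ $\diam f^l(g(B'))\le e^{-m-(n-l)L_0}$ for some $L_0>0$; then $\log^\star$ of the distance grows linearly in $n-l$ and the sum is $\lesssim m^{-(q-1)}$. This is exactly why the paper works with $m$-good inverse branches and obtains them from the positive Lyapunov exponents via the natural extension (Lemmas \ref{l:b:02} and \ref{l:all-nice}, and the distortion Lemma \ref{l:series-phi}).

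The lower bound is also not established by your outline. For (i), the transfer-operator identity you allude to gives, for a fixed neighbourhood $W$ of $\Crit(f)$,
\[
\lambda^{-n}\sum_{f^n(a)=x_0}e^{S_n(\phi)(a)}\,\1_W\big(f^j(a)\big)
=\lambda^{-(n-j)}\Ll^{n-j}\big(\1_W\cdot\lambda^{-j}\1_j\big)(x_0),
\]
which for large $n-j$ is $\approx \langle m_\phi,\1_W\cdot\lambda^{-j}\1_j\rangle\,\rho(x_0)$, of order $m_\phi(W)$. Summing over $j=0,\dots,n-1$ yields something of order $n\,m_\phi(W)$, which does \emph{not} go to $0$ for fixed $W$; so a Borel--Cantelli/counting reduction as in the unweighted case does not transfer directly to the weighted sums. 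For (ii), ``transporting genericity to weighted preimages via Corollary \ref{c:equi-preim}'' is precisely the nontrivial step: in the paper this is carried out rigorously through the disintegration of $\hat\mu_\phi$ over $\mu_\phi$ and an explicit identification of the conditional measures with limits of weighted preimage measures (the Claim inside the proof of Lemma \ref{l:all-nice}). The paper then avoids your good/bad dichotomy altogether by Buff's two-step trick (Lemma \ref{l:bound-weak}): one composes an $m_2$-good branch $B\to D_i$ with an $m_3$-good branch $D_i\to B$, so that each composite $g_j$ is strictly contracting on $B$ and has a repelling fixed point, and both the upper \emph{and} lower mass estimates follow simultaneously from Claims 1--2 there together with the distortion Lemma \ref{l:series-phi}. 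Finally, your justification of the uniform gap $S_m(\phi)(y)\le m(\log\lambda-c_0)$ from uniqueness of the equilibrium state is incorrect (uniqueness gives only a strict, not uniform, inequality); the inequality does hold, but for the elementary reason $\log\lambda\ge k\log d+\min\phi$ and $\tfrac1m S_m(\phi)\le\max\phi$.
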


Note that a related
equidistribution property
   for 
    H\"older continuous
    weights
was proved  by Comman-River-Letelier \cite{comman2011large}
for (hyperbolic and) topologically Collect-Eckmann rational maps on $\P^1$.

 To prove Theorem \ref{t:equidistr-pern},
we follow a now classical 
strategy 
due to Briend-Duval \cite{briend1999exposants} for the measure of maximal entropy
(which corresponds to the case $\phi\equiv 0$).
We 
employ
a trick due to X.\ Buff which simplifies the original proof.
An extra difficulty with respect to the case $\phi\equiv 0$ is due to
the fact that there is no a
priori upper bound for the mass of the left hand side of \eqref{eq:pern:goal} when $P'_n$
is replaced by the set of all repelling periodic
points of period $n$.

Given any point $x\in\P^k$
 we denote by
$\mu_{x,n}$ the measure
\[
\mu_{x,n} := \lam^{-n} \rho(x)^{-1} \sum_{f^n (a)=x } e^{\phi(a) + \phi(f(a)) + \dots + \phi (f^{n-1} (a))} \rho(a) \delta_a.
\]
It follows from
Corollary \ref{c:equi-preim}
that, for every continuous function $g\colon \P^k \to \R$, we have
\[
\sca{\mu_{x,n},g}
=
 \lam^{-n} \rho(x)^{-1} \sum_{f^n (a)=x } e^{\phi(a) + \phi(f(a)) + \dots + \phi (f^{n-1} (a))} \rho(a)g(a)
 \to
 \rho(x)^{-1} \sca{\rho(x)m_\phi, \rho g} = \sca{\mu_\phi, g}
\]
as $n\to \infty$. This means that, for all $x \in \P^k$,
we have
$\mu_{x,n}\to \mu_\phi$ as $n\to \infty$.

We denote by $0<L_1\leq \dots \leq L_k$ the 
Lyapunov exponents of $\mu_\phi$, see 
Proposition \ref{p:large-entropy}. 
We fix in what follows  a constant
$0<L_0 < L_1$.
Given $x\in X$,
 a ball $B$ of center $x$,  and $n\in \N$,
we
say that $g\colon B \to B'$
 is an $m$-\emph{good inverse branch of $f$ of order $n$ on $B$}
if 
\[g \circ f^n=\id_{|B'}
\quad  \mbox{ and }
\quad \diam f^{l}(B')
\leq  e^{-m -(n-l)L_0} 
\mbox{ for all } 0\leq l \leq n.\]
Notice that the definition in particular implies that $\diam (B) \leq e^{-m}$.
We
denote by $\mu^{(m)}_{B,n}$
 the measure
\[
\mu^{(m)}_{B,n} := \lam^{-n}\rho(x)^{-1}
 \sum_{a=g(x)} e^{\phi(a) + \phi(f(a)) + \dots + \phi (f^{n-1} (a))}\rho(a) \delta_a,
\]
where the sum is taken on the
$m$-good inverse branches $g$ of $f$ of order $n$ on $B$.
Since we have $\mu^{(m)}_{B,n}\leq \mu_{x,n}$ for 
all $n\geq 0$,
 it follows that
any limit value $\mu'_B$ of the sequence $\big\{ \mu^{(m)}_{B,n}\big\}$ satisfies
 $\mu'_B \leq \mu_\phi$. 
 In particular, we have
 $\|\mu'_B\|\leq 1$.
 
 Given $m>1$
   we say that a ball $B$ 
   centred
   at $x$ is $m$-\emph{nice}
  if 
  \begin{enumerate}
  \item $\inf_B \rho > (1-1/m)\sup_B \rho$;
  \item $\big\|\mu^{(m)}_{B,n}\big\| \geq 1-1/m$ for every 
$n$ sufficiently large.
  \end{enumerate}
 Observe that 
 the
  second condition implies that
$\diam (B)\leq e^{-m}$ for every $m$-nice ball $B$. Moreover, 
 we have
  $\|\mu'_B\| \geq 1-1/m$ for every limit value $\mu'_B$ of the sequence $\mu^{(m)}_{B,n}$. 

\begin{lemma}\label{l:all-nice}
For $\mu_\phi$-almost every
 $x\in \P^k$, 
 every sufficiently small ball centred at $x$ is $m$-nice.
\end{lemma}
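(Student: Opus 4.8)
The plan is to check the two requirements in the definition of an $m$-nice ball separately. Condition $(1)$ is soft and will in fact hold for \emph{every} centre at all sufficiently small radii: since $\rho$ is continuous and strictly positive on the compact space $\P^k$, there is a uniform $r^{(1)}>0$ such that any ball $B$ of radius $\le r^{(1)}$ satisfies $\sup_B\rho-\inf_B\rho<m^{-1}\min_{\P^k}\rho\le m^{-1}\sup_B\rho$, i.e.\ $\inf_B\rho>(1-1/m)\sup_B\rho$. So the substance is condition $(2)$, and for it I would run the Pesin-theoretic scheme of Briend--Duval \cite{briend1999exposants} on the natural extension of $(\supp\mu_\phi,f)$: the goal is to produce, for $\mu_\phi$-a.e.\ $x$, a radius $r^{(2)}(x)>0$ with $\bigl\|\mu^{(m)}_{\B_{\P^k}(x,r),n}\bigr\|\ge 1-1/m$ for all $r\le r^{(2)}(x)$ and all $n$.

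I would let $\widehat f\colon\widehat X\to\widehat X$ be the natural extension, whose points are the backward orbits $\widehat x=(x_0,x_1,\dots)$ with $f(x_{j+1})=x_j$, let $\widehat\mu$ be the $\widehat f$-invariant lift of $\mu_\phi$ (so $(\pi_0)_*\widehat\mu=\mu_\phi$, $\pi_0(\widehat x)=x_0$), and write $\Pi_n(\widehat x):=x_n\in f^{-n}(x_0)$. By Proposition \ref{p:large-entropy}, $\mu_\phi$ has Lyapunov exponents $0<L_1\le\dots\le L_k$ and $\log|\jac Df|\in L^1(\mu_\phi)$, so the standard Pesin theory of inverse branches for holomorphic endomorphisms (see \cite{briend1999exposants,dinh2010dynamics}) yields a set $\widehat X_0$ with $\widehat\mu(\widehat X_0)=1$ and measurable functions $r(\cdot)>0$, $C(\cdot)\ge1$ on $\widehat X_0$ such that, for $\widehat x\in\widehat X_0$ and every $n\ge0$, there is a univalent inverse branch $g^n_{\widehat x}$ of $f^n$ on $\B_{\P^k}(x_0,r(\widehat x))$ with $g^n_{\widehat x}(x_0)=x_n$ and $\diam g^n_{\widehat x}(\B_{\P^k}(x_0,t))\le 2C(\widehat x)e^{-nL_0}t$ for $0<t\le r(\widehat x)$. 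Using $f^l\circ g^n_{\widehat x}=g^{n-l}_{\widehat x}$ on $\B_{\P^k}(x_0,r(\widehat x))$ for $0\le l\le n$ and setting $r_m(\widehat x):=\min\bigl(r(\widehat x),\,e^{-m}/(2C(\widehat x))\bigr)>0$, I get that for every $r\le r_m(\widehat x)$, writing $B:=\B_{\P^k}(x_0,r)$ and $B':=g^n_{\widehat x}(B)$, one has $\diam f^l(B')=\diam g^{n-l}_{\widehat x}(B)\le 2C(\widehat x)e^{-(n-l)L_0}r\le e^{-m-(n-l)L_0}$ for all $0\le l\le n$; hence $g^n_{\widehat x}|_B$ is an $m$-good inverse branch of $f$ of order $n$ on $B$ reaching $x_n$, \emph{for every} $n\ge0$.

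Next I would disintegrate $\widehat\mu=\int_{\P^k}\widehat\mu_x\,d\mu_\phi(x)$ with $\widehat\mu_x$ carried by $\pi_0^{-1}(x)$, and identify $(\Pi_n)_*\widehat\mu_x=\mu_{x,n}$ for $\mu_\phi$-a.e.\ $x$ and every $n$, where $\mu_{x,n}$ is the measure defined just before the statement; this is checked by testing against functions of $(x_0,x_n)$ and using the $\widehat f$-invariance of $\widehat\mu$ together with $\Ll(\rho)=\lam\rho$, $\Ll^*(m_\phi)=\lam m_\phi$, $\mu_\phi=\rho m_\phi$ (note $\Ll^n(\rho)=\lam^n\rho$, so $\mu_{x,n}$ is a probability measure). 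Fixing $\delta>0$ and choosing $r_\delta>0$ with $\widehat A_\delta:=\{\widehat x\in\widehat X_0:r_m(\widehat x)\ge r_\delta\}$ of $\widehat\mu$-measure $>1-\delta$, each $\widehat x\in\widehat A_\delta\cap\pi_0^{-1}(x)$ produces, for $r\le r_\delta$, an $m$-good inverse branch of order $n$ on $\B_{\P^k}(x,r)$ reaching $\Pi_n(\widehat x)$, so the definition of $\mu^{(m)}_{B,n}$ gives, for all $x\in\supp\mu_\phi$, all $r\le r_\delta$ and all $n\ge0$,
\[
\bigl\|\mu^{(m)}_{\B_{\P^k}(x,r),n}\bigr\|\ \ge\ \mu_{x,n}\bigl(\Pi_n(\widehat A_\delta\cap\pi_0^{-1}(x))\bigr)\ =\ \widehat\mu_x\bigl(\Pi_n^{-1}\bigl(\Pi_n(\widehat A_\delta\cap\pi_0^{-1}(x))\bigr)\bigr)\ \ge\ \widehat\mu_x(\widehat A_\delta).
\]
Integrating, $\int_{\P^k}(1-\widehat\mu_x(\widehat A_\delta))\,d\mu_\phi(x)=1-\widehat\mu(\widehat A_\delta)<\delta$, whence $\mu_\phi(\{x:\widehat\mu_x(\widehat A_\delta)<1-1/m\})<m\delta$. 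Applying this with $\delta=\delta_j:=2^{-j}/m$ and the Borel--Cantelli lemma, $\mu_\phi$-a.e.\ $x$ satisfies $\widehat\mu_x(\widehat A_{\delta_j})\ge1-1/m$ for some $j=j(x)$; for such $x$ and every $r\le r_{\delta_{j(x)}}$ the ball $\B_{\P^k}(x,r)$ satisfies $(2)$, and together with $r^{(1)}$ for $(1)$ every ball centred at $x$ of radius $\le\min(r^{(1)},r_{\delta_{j(x)}})$ is $m$-nice, which is the lemma.

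The hard part will be the Pesin-theoretic input of the second paragraph: the $\widehat\mu$-almost sure existence, \emph{for every order $n$}, of a univalent inverse branch of $f^n$ on a ball of a fixed (point-dependent) size around $x_0$ contracting at the geometric rate $e^{-nL_0}$. This is exactly where the strict positivity of the Lyapunov exponents of $\mu_\phi$ and the $\mu_\phi$-integrability of $\log|\jac Df|$ — both supplied by Proposition \ref{p:large-entropy} — are needed; once this is granted, the rest is bookkeeping on the natural extension and the disintegration identity, which follows directly from the eigen-relations for $\Ll$.
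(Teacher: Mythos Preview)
Your proposal is correct and follows essentially the same route as the paper: pass to the natural extension, invoke the Pesin--Briend--Duval inverse-branch package (the paper's Lemma \ref{l:b:02}, powered by Proposition \ref{p:large-entropy}), disintegrate $\widehat\mu$ over $\mu_\phi$, and use a Markov-type argument to control the fibre measures. The only differences are stylistic: you state the key identity $(\Pi_n)_*\widehat\mu_x=\mu_{x,n}$ directly (the paper obtains the equivalent information by showing $\hat\mu^x_n\to\hat\mu^x_\phi$ on cylinders), and you finish with Borel--Cantelli over $\delta_j=2^{-j}/m$ whereas the paper uses an auxiliary integer parameter $\gamma$ with $\mu_\phi(X_\gamma)>1-m^{-\gamma}$; both reach the same almost-everywhere conclusion.
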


The proof of Lemma \ref{l:all-nice}
 is elementary but makes uses of the natural extension of the system
 $(\P^k, f, \mu_\phi)$, see for instance \cite[Sec.\ 10.4]{cornfeld2012ergodic}.
 We 
 denote by
  $X_0, C_f, PC_f$
   the small Julia set, the critical set 
   and the postcritical set
   $PC_f := \cup_{n\geq 0} f^n (C_f)$ of $f$, respectively. We also set $X:= X_0 \setminus \cup_{m\in \N} f^{-m} ( PC_f)$.
By Proposition \ref{p:large-entropy}
we have $\mu_\phi (f^{-m} (PC_f))=0$ for every $m\in \N$, hence $\mu(X)=1$.
 We denote by 
  $\hat X$ the 
 set
 \[
  \hat X := \set{\hat x  := (x_n)_{n \in \mathbb Z} \colon x_n \in X, f(x_n) = x_{n+1}},
 \]
 by $\pi_n: \hat x \mapsto x_n$ the natural projection
 from $\hat X$ to $X$ and by
$\hat f\colon \hat X \to \hat X$
the map
\[
\hat f (\dots, x_{-1}, x_0, x_1, \dots) :=
(\dots, f(x_{-1}), f(x_0), f (x_1), \dots)
= (\dots, x_{0}, x_1, x_2, \dots).\]
Observe that
$\pi_n \circ \hat f = f \circ \pi_n$
 for all $n\in \mathbb Z$.
Let us consider on $\hat X$ the $\sigma$-algebra $\hat { \mathcal {B}}$ generated by all 
\emph{cylinders}, i.e., the sets
of the form
\[
A_{n, B} :=\pi_n^{-1} (B)= \{\hat x \colon  x_n \in B\}
\mbox{ for } n\leq 0 \mbox{ and } B\subseteq \P^k \mbox{ a Borel set}\]
and 
set
\[
\hat \mu_\phi (A_{n,B}) := \mu_\phi (B) \mbox{ for all }  A_{n,B} \mbox{ as above}. 
\]
It follows from the invariance of $\mu_\phi$ 
and the fact that $x_n \in B$ if and only if $x_{n-m} \in f^{-m} (B)$ 
(with $m\geq 0$)
that 
$\hat \mu_\phi$ is well defined on the
collection of the sets $A_{n,B}$ and
\[
\hat \mu_\phi (A_{n,B}) = \hat \mu_\phi (A_{n-m,B}) \mbox{ for all } m\geq 0.
\]
Similarly, for every $m>0$ and  Borel sets
$B_0,B_{-1}, \dots, B_{-m}\subseteq \P^k$
we then have 
\[
\begin{aligned}
\hat \mu_\phi ( \{ \hat x \colon x_0  \in B_0, x_{-1} \in B_{-1},  & \dots, x_{-m} \in B_{-m} \} )\\
&  =
\hat \mu_\phi ( \{ \hat x \colon
x_{-m} \in f^{-m} (B_0) \cap f^{-(m-1)} (B_{-1}) \cap \dots \cap B_{-m}  
\} )\\
&=\mu_\phi (f^{-m} (B_0) \cap f^{-(m-1)} (B_{-1}) \cap \dots \cap B_{-m}).
\end{aligned}
 \]
We then extend $\hat \mu_\phi$ to a 
probability
measure, still denoted by $\hat \mu_\phi$, on $\hat {\mathcal B}$. Observe that
$\hat \mu_\phi$ is $\hat f$-invariant by construction and satisfies 
 $(\pi_0)_* \hat \mu_\phi = \mu_\phi$.

For $n>0$ we denote by $f_{\hat x}^{-n}$ the inverse branch of $f^n$ defined in a neighbourhood
 of $x_0$ and such that $f_{\hat x}^{-n} (x_0)= x_{-n}$. 
This branch exists for all $x_0 \in X$.  
We have the following lemma.

\begin{lemma}\label{l:b:02}
For every $0<L< L_1$
there exist
two measurable functions
 $\eta_L \colon \hat X \to (0,1]$
 and $S_L \colon \hat X \to (1,+\infty)$ 
 such that, for 
 $\hat \mu_\phi$-almost every $\hat x \in \hat X$,
 the map $f^{-n}_{\hat x}$ is defined on $\B_{\P^k}(x_0, {\eta_L (\hat x)})$
with
$\lip (f^{-n}_{\hat x})\leq S_L (\hat x)e^{-nL}$
for every $n \in \N$.
\end{lemma}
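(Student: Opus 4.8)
The plan is to apply the Oseledec--Pesin theory to the natural extension $(\hat X, \hat f, \hat\mu_\phi)$, using that all Lyapunov exponents of $\mu_\phi$ are strictly positive by Proposition~\ref{p:large-entropy}. The key point is that along a $\hat\mu_\phi$-generic backward orbit $\hat x = (x_n)_{n\le 0}$ the derivative $Df^n$ computed at $x_{-n}$ is uniformly expanding up to a subexponential error, so its inverse contracts at rate roughly $e^{-nL_1}$; one then uses a standard distortion/graph-transform argument to promote this infinitesimal contraction to a genuine inverse branch defined on a ball of definite (though $\hat x$-dependent) radius.

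First I would fix $0<L<L_1$ and, using Proposition~\ref{p:large-entropy} together with the integrability of $\log|\jac Df|$ with respect to $\mu_\phi$ and the fact that $\mu_\phi$ charges no proper analytic subset (so $x_0\notin C_f$ for $\hat\mu_\phi$-a.e.\ $\hat x$), invoke the multiplicative ergodic theorem on $\hat X$: for $\hat\mu_\phi$-a.e.\ $\hat x$ the limit $\lim_{n\to\infty}\frac1n\log\|(Df^n_{x_{-n}})^{-1}\|$ equals $-L_1<-L$, and moreover the angles between the Oseledec subspaces decay subexponentially. Hence there is a measurable function $S'_L(\hat x)\ge 1$ with $\|(Df^n_{x_{-n}})^{-1}\| = \|D(f^{-n}_{\hat x})_{x_0}\| \le S'_L(\hat x)\,e^{-nL}$ for all $n$. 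The error in passing from the Oseledec bound at the top exponent $L_1$ to the chosen $L<L_1$ absorbs the subexponential fluctuations, so that $L$ is attained with only a multiplicative constant $S'_L(\hat x)$.

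Next I would upgrade this pointwise derivative estimate to the existence of a univalent inverse branch on a ball. Since $f$ is a local biholomorphism near each $x_{-n}$ (as $x_{-n}\notin C_f$, because $x_0\in X=X_0\setminus\bigcup_m f^{-m}(PC_f)$), the branch $f^{-n}_{\hat x}$ exists in some neighbourhood of $x_0$; the issue is a uniform lower bound on its domain. Here I would apply the standard ``Pesin--Koebe'' argument: using the Koebe distortion theorem (or the fact that a univalent map with bounded derivative distortion has bounded image), one shows inductively that if $f^{-j}_{\hat x}$ is defined on $\B_{\P^k}(x_0,\eta_L(\hat x))$ for $j\le n$ then its image has diameter $\lesssim S'_L(\hat x)\eta_L(\hat x)e^{-jL}$, which stays inside a fixed neighbourhood of $x_{-j}$ on which $f$ is invertible provided $\eta_L(\hat x)$ is chosen small enough depending only on $\hat x$; a Borel--Cantelli type argument (the set of backward orbits whose $n$-th preimage comes within distance $e^{-nL'}$ of $PC_f$, for $L<L'<L_1$, has summable $\hat\mu_\phi$-measure by the contraction estimate) produces a measurable radius $\eta_L(\hat x)\in(0,1]$ that works for all $n$ simultaneously. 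One then sets $S_L(\hat x):=C\,S'_L(\hat x)$ for a suitable absolute Koebe constant $C$, and the bound $\lip(f^{-n}_{\hat x})\le S_L(\hat x)e^{-nL}$ on $\B_{\P^k}(x_0,\eta_L(\hat x))$ follows from the distortion estimate.

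The main obstacle I expect is the control of the domain of definition: the derivative bound alone does not give a branch on a ball of $\hat x$-independent shape, and one must rule out that preimages accumulate too fast on the postcritical set. This is exactly where one uses $\mu_\phi(f^{-m}(PC_f))=0$ (Proposition~\ref{p:large-entropy}) promoted, via the measure $\hat\mu_\phi$ on the natural extension, to a statement about $\hat\mu_\phi$-a.e.\ backward orbit staying at a subexponentially small but positive distance from $PC_f$ at each step; combined with the exponential contraction this forces the preimages of a small ball to remain disjoint from the critical set for all times, giving the univalent branch on $\B_{\P^k}(x_0,\eta_L(\hat x))$.
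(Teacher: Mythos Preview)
Your proposal is correct and follows essentially the same strategy as the paper. The paper gives only a sketch: it invokes Proposition~\ref{p:large-entropy} for the integrability of $\log|\jac Df|$ with respect to $\mu_\phi$, and then refers directly to the classical construction of Briend--Duval \cite{briend1999exposants} and Berteloot--Dupont \cite{berteloot2008normalization} (carried out there for $\phi=0$), noting that the only ingredient specific to the measure is precisely this integrability, so the same proof goes through verbatim. Your sketch simply unpacks that classical argument (Oseledec on the natural extension, then the quantitative inverse-branch construction), which is exactly what those references do.
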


\begin{proof}[Sketch of proof]
The statement
 is a consequence of
Proposition \ref{p:large-entropy}.
A direct proof in the case $\phi=0$ is given in
 \cite[Sec.\ 2]{briend1999exposants}
 and
\cite[Thm.\ 1.4(3)]{berteloot2008normalization}. The 
 case $n=1$ comes from a (quantitative) application of the 
inverse mapping theorem, which is then iterated to get 
functions $\eta_L$
 and $S_L$ valid for all $n$. The main point in the proof
 is an application of the Birkhoff ergodic theorem to the function $\log |\jac Df|$. This function
 is
 integrable with respect to the measure of maximal entropy $\mu_0$, which has continuous potentials,
 because of the Chern-Levine-Nirenberg inequality
 \cite{chern1969intrinsic}.
Since
this function is integrable with respect to $\mu_\phi$ by Proposition
\ref{p:large-entropy},
 the same proof applies in our setting.
 \end{proof}

\begin{proof}[Proof of Lemma \ref{l:all-nice}]
Since $\rho$
is continuous
and strictly positive, 
we only need to check
that, for
$\mu_\phi$-almost every
 $x\in \P^k$, 
 every sufficiently small ball $B$
 centred at $x$ 
 satisfies 
 $\big\|\mu^{(m)}_{B,n}\big\| \geq 1-1/m$ 
 for every $n$ sufficiently large.
 
 Let us consider the 
 disintegration
 of the measure $\hat \mu_\phi$
 with respect to $\mu_\phi$
 and the projection $\pi_0$. 
 We denote by $\hat \mu_\phi^x$ the conditional measure
 on $\{x_0 =x\}$. 
 The measure $\hat \mu^x_\phi$ is uniquely
 defined for 
 $\mu_\phi$-almost all $x \in X$ and characterized by the identity
\[
\sca{\hat \mu_\phi, g} = \sca{\mu_\phi , u(x)}, \mbox{ where } u(x) := \sca{\hat \mu^x_\phi, g}
\] 
 for all 
bounded measurable 
 functions $g \colon \hat X \to \R$. 
Since $(\pi_0)_*\hat \mu_\phi = \mu_\phi$, $\hat \mu^x_\phi$
is a probability measure
for $\mu_\phi$-almost every $x$.

 We will need a more explicit
 description of the conditional measures $\hat \mu^x_\phi$.
 For $n> 0$ and $x\in X$ we
  consider the measure $\hat \mu^x_{n}$ on $\hat X$ defined as follows.
First, let us consider the projection $\hat X\to X^{n+1}$ given by
\[
\hat \pi^n := (\pi_{-n}, \dots, \pi_{-1}, \pi_0). 
\]   
  For every element $(y_{-n}, \dots, y_{0}) \in X^{n+1}$
  we choose
   a representative $\hat z \in \hat X$ such that $z_{j}=y_j$ for all $-n \leq j \leq 0$. For any given $y_0$
  and any  $n>0$
   we then have
  $d^{kn}$ 
  distinct
  such representatives, and we denote by $\hat Z_n$ their collection.
  We then set
 \[
\hat \mu^x_{n} := \lam^{-n} \rho(x)^{-1} \sum_{\hat z\in \hat Z_n \colon  z_0=x } e^{\phi(z_{-n}) + \phi(z_{-n+1}) + \dots + \phi (z_{-1})} \rho(z_{-n})\delta_{\hat z}.
 \]
Since 
 this is a finite sum,
 the measures $\hat \mu^x_{n}$
 are well defined on $\hat X$.

 \medskip
 
\noindent
{\bf Claim.} We have $\lim_{n\to \infty} \hat \mu^x_{n} = \hat \mu^x_{\phi}$ for
$\mu_\phi$-almost every
 $x \in X$.
 
\begin{proof}
It is enough to check the assertion on the cylinders
 $A_{-i,B}$
for $i\geq 0$ and $B\subseteq \P^k$ a Borel set.
  It is clear
  that, 
  for all $n>0$,
  we have $\hat \mu^x_{n} (A_{0,B}) = \delta_x (B)$, which implies that
  \[\int \hat \mu^x_n (A_{0,B}) \mu_\phi (x) = \int \delta_x (B) \mu_\phi(x) = \mu_\phi (B).\]
Moreover, for all $n> i$, 
using the invariance of $\rho$
by $\lam^{-1}\Ll$
we
have
\[\begin{aligned}
\hat \mu^x_{n} (A_{-i,B}) 
& = \hat \mu^x_{n} (A_{-i,B}\cap \pi^{-1}_0 (x)) \\
& =
 \lam^{-n} \rho(x)^{-1} 
 \sum_{\hat z\in \hat Z_n \colon  z_0=x } 
 e^{\phi(z_{-n}) + \phi(z_{-n+1}) + \dots + \phi (z_{-1})} \rho(z_{-n})\delta_{\hat z} (A_{-i,B})\\
 &=
 \lam^{-n} \rho(x)^{-1}
 \sum_{\hat z \in \hat Z_i \colon z_0 =x} 
 (\Ll^{n-i} \rho )(z_{-i})
 e^{\phi(z_{-i}) + \phi(z_{-i+1}) + \dots + \phi (z_{-1})} \delta_{\hat z} (A_{-i,B})\\
&= \lam^{-i} \rho(x)^{-1}
 \sum_{\hat z \in \hat Z_i \colon z_0 =x} 
\rho (z_{-i})
 e^{\phi(z_{-i}) + \phi(z_{-i+1}) + \dots + \phi (z_{-1})}
 \delta_{\hat z} (A_{-i,B})\\
 & = \hat \mu^x_{i} (A_{-i,B}).
\end{aligned}\]
In order 
to conclude it is enough to prove
that
\[
\int \hat \mu^x_{i} (A_{-i,B}) \mu_\phi(x) = \mu_\phi (B) \mbox{ for all } i>0.
\]
We have
\[
\begin{aligned}
\int  \hat \mu^x_{i} (A_{-i,B}) \mu_\phi(x)
&=
\int 
\Big(
\lam^{-i}\rho(x)^{-1} 
\sum_{\hat z \in \hat Z_i \colon z_0 =x} e^{\phi (z_{-i} ) + \phi (z_{-i+1})  + \dots + \phi (z_{-1})}
\rho (z_{-i} ) \delta_{\hat z} (A_{-i,B})
\Big)
\mu_\phi(x) \\
&=
\int 
\Big(
\lam^{-i}\rho(x)^{-1} 
\sum_{f^i (a)= x} e^{\phi(a ) + \phi (f(a))  + \dots + \phi (f^{i-1}(a))}
\rho (a )\1_B (a)
\Big)
\mu_\phi(x) \\
&= \Big\langle
\mu_\phi, \lam^{-i}\rho^{-1} f^i_* (e^{\phi + \phi \circ f + \dots + \phi \circ f^{i-1}} \rho \1_B)
\Big\rangle\\
& = \Big\langle  \frac{\rho e^{\phi + \phi \circ f + \dots +\phi \circ f^{i-1}}}{\lam^i (\rho \circ f^i)  }  (f^{i})^* \mu_\phi,  \1_B\Big\rangle
= \mu_\phi (B),
\end{aligned}
\]
where in the last step we used the fact that the
Jacobian of $\mu_\phi$
(i.e., the Radon-Nidokym 
derivative $\frac{f^* \mu_\phi}{\mu_\phi}$)
 is given by $\lam \rho^{-1}e^{-\phi} (\rho\circ f)$, which implies
that
\[(f^{i})^* \mu_\phi = \lam^i \rho^{-1}
e^{ -\sum_{j=0}^{i-1} \phi \circ f^j}
(\rho\circ f^i) \mu_\phi.\]
This completes the proof of the Claim.
\end{proof}

 Let us now fix an integer  $m>0$, a constant $L_0 < L < L_1$, and a second positive integer $\gamma$.
  For every integer $N>0$ we set 
\[ \hat X_N :=\big\{\hat x \in \hat X \colon
\eta_L (\hat x) \geq N^{-1}
\mbox{ and }
S_L (\hat x)\leq N\big\}.
\]
 Observe that $\hat \mu_\phi (\hat X_N) \to 1$ as $N\to \infty$. In particular, there exists $N_0 = N_0 (m,\gamma)$
 such that,
 for every $N>N_0$, we have
 $\hat \mu_\phi (\hat X_N) > 1-1/(2m^{\gamma+1})$. It follows by Markov inequality
 that there exists a subset $X_\gamma \subset X$
 with 
 $\mu_\phi (X_\gamma) > 1- 1/m^\gamma$
  such that, for all $N> N_0$,
 \[
  \hat \mu^x_\phi (\hat X_N \cap \{ x_0=x\})>1-1/(2m) \mbox{ for all } x \in X_\gamma.
 \]

It is enough to prove the 
property in the lemma
 for all $x \in X_\gamma$. Let us fix one such $x$. By Lemma \ref{l:b:02} and the definition of $\hat X_N$,
  for every $\hat x \in \hat X_N$
  and $n\geq 0$
   the inverse branch
  $f^{-n}_{\hat x}$ is defined on the ball $B_{\P^k} (x_0, N^{-1})$
  with 
 $\lip (f^{-n}_{\hat x}) \leq N e^{-nL}$.
  In particular,
$\diam (f^{-n}_{\hat x} 
(B_{\P^k} (x_0, e^{-m}/(2N) ))) \leq e^{-m- n L_0}$  
   for all 
  $n\geq 0$.
 It follows that
 all inverse branches
on $B_{\P^k} (x, e^{-m}/(2N))$ 
corresponding to elements $\hat x \in \hat X_N \cap \{ x_0 =x\}$ are $m$-good
 for all $n$.
 
 The Claim above implies that
 \[
  \hat \mu^x_{n} (\hat X_N \cap \{ x_0=x\})>1-1/m \mbox{ for all } n \mbox{ large enough}.
 \]
 This precisely means that, for all $n$ sufficiently large,  
we have  $\big\|\mu^{(m)}_{B,n}\big\|> 1-1/m$, where $B=B_{\P^k} (x,e^{-m}/(2N) )$. 
This implies that such a ball
 $B$ is $m$-nice.
The proof is complete.
 \end{proof}
 
\begin{lemma}\label{l:series-phi}
There exists a positive constant $C=C(L_0,q)$ such that, 
for all $n\in \N, m>0$,
and
every $m$-good inverse branch
$g\colon B \to B'$
 of $f$ of order $n$ 
  on a ball $B$, 
 and for all
 sequences of points $\{x_l\},\{y_l\}$ with $0\leq l \leq n-1$
  and $x_l,y_l \in f^l (B')$ we have
\[
\sum_{l=0}^{n-1} \abs{ \phi (x_l) - \phi (y_l)}\leq C m^{-(q-1)}.
\]
\end{lemma}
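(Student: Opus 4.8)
The plan is to bound each term $|\phi(x_l)-\phi(y_l)|$ using the $\log^q$-continuity of $\phi$ together with the exponential contraction of the sets $f^l(B')$ built into the definition of an $m$-good inverse branch, and then to sum the resulting convergent series.

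First I would recall from Definition \ref{defi_logp_cont} that for any two distinct points $a,b\in\P^k$,
\[
|\phi(a)-\phi(b)|\le \norm{\phi}_{\log^q}\,(\log^\star\dist(a,b))^{-q}=\norm{\phi}_{\log^q}\,\big(1+|\log\dist(a,b)|\big)^{-q}.
\]
Fix $0\le l\le n-1$; the indices with $x_l=y_l$ contribute nothing, so assume $x_l\ne y_l$. Since $x_l,y_l\in f^l(B')$ we have $\dist(x_l,y_l)\le\diam f^l(B')\le e^{-m-(n-l)L_0}$, and this quantity is $<1$ because $m>0$; hence $|\log\dist(x_l,y_l)|\ge m+(n-l)L_0$, and therefore
\[
|\phi(x_l)-\phi(y_l)|\le\norm{\phi}_{\log^q}\,\big(m+(n-l)L_0\big)^{-q}.
\]

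Summing over $l$ and reindexing by $j:=n-l$, I would then get
\[
\sum_{l=0}^{n-1}|\phi(x_l)-\phi(y_l)|\le\norm{\phi}_{\log^q}\sum_{j=1}^{n}\big(m+jL_0\big)^{-q}\le\norm{\phi}_{\log^q}\sum_{j=1}^{\infty}\big(m+jL_0\big)^{-q}.
\]
Since $q>2>1$ and $t\mapsto(m+tL_0)^{-q}$ is decreasing on $[0,\infty)$, comparison with the integral gives
\[
\sum_{j=1}^{\infty}\big(m+jL_0\big)^{-q}\le\int_0^\infty(m+tL_0)^{-q}\,dt=\frac{1}{L_0}\int_m^\infty s^{-q}\,ds=\frac{m^{-(q-1)}}{(q-1)L_0},
\]
so $\sum_{l=0}^{n-1}|\phi(x_l)-\phi(y_l)|\le \frac{\norm{\phi}_{\log^q}}{(q-1)L_0}\,m^{-(q-1)}$. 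This is the asserted inequality with $C:=\norm{\phi}_{\log^q}/((q-1)L_0)$, which depends only on $L_0$ and $q$ once the weight $\phi$ is fixed. There is no real obstacle in this argument: the only small points are to discard the vanishing terms before taking logarithms and to use $q>1$, the gained power of $m$ coming precisely from the shift-by-$m$ in the integral comparison $\sum_{j\ge1}j^{-q}\asymp\int_0^\infty t^{-q}\,dt$.
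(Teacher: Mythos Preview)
Your proof is correct and follows essentially the same approach as the paper's: bound $\dist(x_l,y_l)$ by $e^{-m-(n-l)L_0}$ from the definition of $m$-good, apply the $\log^q$-modulus of continuity, reindex, and sum the resulting series. The paper's version is terser (it writes $\sum_{l\ge1}(1+m+lL_0)^{-q}\lesssim m^{-(q-1)}$ without spelling out the integral comparison), and it invokes $q>2$ where, as you correctly observe, $q>1$ already suffices for this particular estimate.
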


\begin{proof}
Since  $g$ is $m$-good, 
we have 
$\dist(x_l,y_l) \leq 
 e^{-m-(n-l)L_0}$
 for all $0\leq l \leq n-1$.
  Hence,
\[
\sum_{l=0}^{n-1} \abs{ \phi (x_l) - \phi (y_l)}
 \leq 
\sum_{l=0}^{n-1} \norm{\phi}_{\log^q} |\log^\star \dist (x_l, y_l)|^{-q}
 \leq
 \norm{\phi}_{\log^q}
\sum_{l=1}^{\infty} | 1+m + l L_0 |^{-q}
\lesssim m^{-(q-1)},
\]
where the implicit constant depends on $L_0,q$ 
and we used the assumption that $q>2$.
\end{proof}

\begin{lemma}\label{l:bound-weak}
Let 
$\mathcal U$ 
be a finite collection of 
disjoint
 open subsets
of $\P^k$. 
For every $m>0$ 
 there exists
$n(m, \mathcal U)>m$ 
and,
 for every $n\geq n(m, \mathcal U)$,
  a set $Q_{m,n}$
of 
 repelling periodic points 
of period $n$ in the intersection of 
the union of the sets in $\mathcal U$ with the small Julia set
such that, for all $U \in \mathcal U$,
\[
\begin{aligned}
(1-1/m) 
\mu_\phi (U)  \leq 
\lam^{-n} \sum_{y \in Q_{m,n} \cap U} e^{\phi(y) + \phi(f(y)) + \dots + \phi (f^{n-1} (y))} 
\leq (1+1/m)
\mu_\phi(U).
\end{aligned}
\]
 \end{lemma}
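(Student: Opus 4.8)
The plan is to build $Q_{m,n}$ one point at a time: from each suitably contracting inverse branch of $f^n$ over a small ``$m$-nice'' ball I produce a repelling $n$-periodic point, and I arrange finitely many such balls to exhaust $U$ up to arbitrarily small $\mu_\phi$-mass. Throughout write $S_n(\phi):=\sum_{j=0}^{n-1}\phi\circ f^j$, so the weight attached to $y$ in \eqref{eq:pern:goal} is $e^{S_n(\phi)(y)}$. I may assume $\mu_\phi(U)>0$ (otherwise I place no periodic point in $U$), and, since the target interval $[(1-1/m)\mu_\phi(U),(1+1/m)\mu_\phi(U)]$ only widens as $m$ decreases, it suffices to treat $m$ large. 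By Lemma \ref{l:all-nice}, for $\mu_\phi$-a.e.\ $x\in U$ all sufficiently small balls centred at $x$ are $m$-nice, and $\mu_\phi(\partial\B_{\P^k}(x,r))=0$ for all but countably many $r$; hence these balls form a Vitali covering of $\mu_\phi$-almost all of $U$. Applying the Vitali covering theorem for the Radon measure $\mu_\phi$, I fix for each $U\in\mathcal U$ a finite pairwise disjoint family of $m$-nice balls $B_{U,1},\dots,B_{U,s_U}\subseteq U$ with $\mu_\phi(\partial B_{U,i})=0$ and $\sum_i\mu_\phi(B_{U,i})>(1-\tfrac1{4m})\mu_\phi(U)$.

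Next I construct the points. Fix one such ball $B=\B_{\P^k}(x,r)$ and a large $n$. For an $m$-good inverse branch $g\colon B\to B'$ of order $n$ whose image satisfies $B'\subseteq B$, the map $g$ sends $\overline B$ into $B$; since $g$ is $m$-good, $\diam B'=\diam f^0(B')\leq e^{-m-nL_0}\ll r$, so for $n$ large $g$ is a contraction of $\overline B$ and has a unique fixed point $y_g\in B'$. Then $f^n(y_g)=y_g$, $y_g$ is repelling because $g$ contracts near $y_g$, and $y_g$ lies in the small Julia set because $y_g=\lim_k g^k(x)$, $x\in\supp\mu_\phi$, and $\supp\mu_\phi$ is totally invariant. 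The requirement $B'\subseteq B$ holds as soon as the base point $a_g:=g(x)$ lies in the concentric ball $\B_{\P^k}(x,r-e^{-m-nL_0})$. I let $Q_{m,n}$ be the set of all these $y_g$, over all the balls $B_{U,i}$ and all $U\in\mathcal U$; standard arguments show they have period exactly $n$ after discarding the $o(1)$-proportion whose period is a proper divisor of $n$. As the balls $B_{U,i}$ are disjoint and $y_g$ lies in the ball it was produced from, distinct branches give distinct points and $Q_{m,n}\cap U$ is precisely the subfamily coming from $B_{U,1},\dots,B_{U,s_U}$.

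Now I estimate the weighted count. For $B=B_{U,i}$, both $y_g$ and $a_g$ lie in $B'$, so $\dist(f^l(y_g),f^l(a_g))\leq\diam f^l(B')\leq e^{-m-(n-l)L_0}$ for $0\leq l\leq n-1$, and Lemma \ref{l:series-phi} gives $|S_n(\phi)(y_g)-S_n(\phi)(a_g)|\leq Cm^{-(q-1)}$. Combined with $\rho(y_g)/\rho(a_g),\ \rho(a_g)/\rho(x)\in[\,1-\tfrac1m,\,(1-\tfrac1m)^{-1}\,]$ (condition (1) of niceness) this yields
\[
\lam^{-n}\sum_{g}e^{S_n(\phi)(y_g)}=\bigl(1+O(m^{-(q-1)})+O(m^{-1})\bigr)\ \lam^{-n}\rho(x)^{-1}\sum_{g}e^{S_n(\phi)(a_g)}\rho(a_g),
\]
the sum being over the $m$-good branches over $B$ with $B'\subseteq B$; the right-hand sum is the mass of $\mu^{(m)}_{B,n}$ carried by those branches. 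It is $\leq\mu_{x,n}(B)$, which tends to $\mu_\phi(B)$ by Corollary \ref{c:equi-preim} and $\mu_\phi(\partial B)=0$; and it is $\geq\mu^{(m)}_{B,n}(\B_{\P^k}(x,r-e^{-m-nL_0}))$, which also tends to $\mu_\phi(B)$ once one knows that the good branches over $B$ capture all but an $o(1)$ fraction of the mass of $\mu_{x,n}$. Summing over $i$, using $\sum_i\mu_\phi(B_{U,i})>(1-\tfrac1{4m})\mu_\phi(U)$, taking $m$ large enough to make the multiplicative error $\leq\tfrac1{4m}$, and then choosing $n(m,\mathcal U)$ large enough to absorb the finitely many $o(1)$-terms into $\tfrac1{4m}\mu_\phi(U)$, gives the two-sided bound $(1\pm1/m)\mu_\phi(U)$.

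The main obstacle is exactly the lower bound just used, together with the fact — emphasised in the text — that there is no a priori control on the total weight of \emph{all} repelling $n$-periodic points, so both inequalities must be extracted from the explicit construction. One needs the good, still contracting, inverse branches over a nice ball to account for essentially the \emph{full} $\mu_{x,n}$-mass of the preimages landing in that ball, i.e.\ the exceptional mass must be $o(1)$ rather than merely $O(1/m)$ (an $O(1/m)$ loss per ball would be fatal, since a cover of $U$ by $m$-nice balls needs many of them). This forces me to re-run the natural-extension argument behind Lemma \ref{l:all-nice}, via Lemma \ref{l:b:02}, but with the truncation level $N$ allowed to grow with $n$, so that $\hat\mu^x_\phi(\hat X_N)\to1$ while the branches from $\hat X_N$ still contract (their Lipschitz constants are $\leq Ne^{-nL}\to0$); then $\hat\mu^x_n(\hat X_N)\to1$ and the exceptional mass over $B$ is $o(1)$. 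Everything else is bookkeeping with the $o(1)$ errors, which is why $n(m,\mathcal U)$ is permitted to depend on the finite family $\mathcal U$.
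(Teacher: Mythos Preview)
Your upper bound and the general architecture (Vitali covering by nice balls, extracting a repelling fixed point from each contracting inverse branch, comparing $S_n(\phi)(y_g)$ to $S_n(\phi)(a_g)$ via Lemma~\ref{l:series-phi}) are fine. The genuine gap is exactly where you locate it, and your proposed fix does not close it.

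You want, for a \emph{fixed} $m$-nice ball $B=\B_{\P^k}(x,r)$, that the $m$-good inverse branches on $B$ miss only an $o(1)$ fraction of $\mu_{x,n}$ as $n\to\infty$. But whether a branch $f^{-n}_{\hat x}$ is $m$-good on $B$ is governed by $n$-independent quantities: it must be defined on all of $B$ (so $\eta_L(\hat x)\ge r$) and its Lipschitz estimates from Lemma~\ref{l:b:02} must give $\diam f^{-j}_{\hat x}(B)\le e^{-m-jL_0}$ for all $j\ge0$, which at $j=1$ forces $S_L(\hat x)\le e^{-m+L-L_0}/(2r)$. These constraints cut out a fixed subset of $\{\hat x:x_0=x\}$ whose $\hat\mu^x_\phi$-measure is some number strictly less than $1$; the exceptional mass is therefore a positive constant, not $o(1)$. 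Letting the truncation level $N$ grow with $n$ enlarges $\hat X_N$ only by admitting $\hat x$ with $\eta_L(\hat x)\in[N^{-1},r)$ or $S_L(\hat x)$ large — precisely those whose branches are \emph{not} defined on $B$, or not $m$-good on $B$ regardless of $n$. The Lipschitz bound $Ne^{-nL}\to0$ is irrelevant here because the domain of definition, not the contraction rate, is the binding constraint once $B$ is fixed.

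The paper sidesteps this by a two-step construction. Having fixed the $m_2$-nice ball $B$, it chooses an auxiliary family of disjoint $m_3$-nice balls $D_i$ covering $\P^k$ up to $\mu_\phi$-mass $1/m_3$, with $m_3\gg m_2/\mu_\phi(B)$ chosen \emph{after} $B$. The periodic points are obtained by composing an $m_2$-good branch of order $N_2$ sending $B$ into some $D_i$ with an $m_3$-good branch of order $N_1$ sending $D_i$ back into $B$. In the first factor the target $D^\star$ has $\mu_\phi$-mass $\approx1$, so the $O(1/m_2)$ loss from niceness of $B$ is harmless; in the second factor the target $B^\star$ has small mass $\approx\mu_\phi(B)$, but the loss is $O(1/m_3)\ll\mu_\phi(B)$ by the choice of $m_3$. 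This decoupling --- absorbing the niceness error at a scale where the target has full mass, and making the niceness error tiny where the target has small mass --- is what your one-step scheme lacks and what makes the two-sided estimate \eqref{e:goal-pern} go through.
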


  \begin{proof}
  We can assume that $\mathcal U$ consists
  of a single open set $U$, the general case follows
  by taking $n(m, \mathcal U)$ to be 
  the maximum of the $n(m,U)$, for $U\in \mathcal U$.
  We can also assume that $\mu_\phi (U)>0$
  because otherwise we can choose
 $n(m,U)=m+1$ and
 $Q_{m,n}= \varnothing$. 
Fix integers
 $ m_2 \gg m_1 \gg m$.
By Lemma \ref{l:all-nice}, for 
$\mu_\phi$-almost every
 point $a$, every
 ball
  of sufficiently small
radius centred at $a$ is 
$m_2$-nice. 
Hence, we can find a finite family of
 disjoint  $m_2$-nice balls $B_i\Subset U$, 
  such that $\mu_\phi (U \setminus \cup  B_i) < \mu_\phi (U) /m_2$.
  It is then enough to prove 
  the lemma
  for each $B_i$ instead of $U$.
  More precisely,
   let  $B= B_{\P^k}(a,r)$ be an $m_2$-nice ball.
   It is enough to find
   an $n(m_2)>m_2$ and, for all $n\geq n(m_2)$,
 a set $Q$ of repelling periodic points of period $n$ in $B\cap \supp (\mu_\phi)$
   such that
   \begin{equation}\label{e:goal-pern}
(1-1/m_1)
\mu_\phi (B) 
 \leq 
\lam^{-n} \sum_{y \in Q} e^{\phi(y) + \phi(f(y)) + \dots + \phi (f^{n-1} (y))} 
\leq (1+1/m_1)
 \mu_\phi(B).
   \end{equation}

   We fix in what follows
   an integer $m_3 \gg m_2/\mu_\phi(B)$ and
a second ball $B^\star =  B_{\P^k} (a, r^\star)$, with $r^\star<r$, such that 
$\mu_\phi (B^\star) > (1-1/m_2) \mu_\phi(B)$.
Choose
a finite family of disjoint
$m_3$-nice balls
$D_i$
 with the property that $\mu_\phi (\cup D_i)> 1-1/m_3$. 
We set
$D:= \cup D_i$ 
and let
$b_i$ be the center of $D_i$. 
We also fix balls $D^\star_i\Subset D_i$ centred at 
$b_i$ and such that $\mu_\phi (\cup  D^\star_i)> 1-1/m_3$ 
and set $D^\star := \cup D^\star_i$.

\medskip\noindent
{\bf Claim 1.}
There is an integer $M_1= M_1(m_2, B, B^\star, D_i)$
such that, for all 
$N\geq M_1$,
we have
\begin{equation}\label{e:mdn}
 (1-4/m_2) 
  \mu_\phi (B)\leq
\mu^{(m_3)}_{D_i,N} (B^\star)
 \leq (1+4/m_2)
  \mu_\phi (B) 
  \quad \mbox{ for all } i.
\end{equation}
\begin{proof}
Since the balls $D_i$ are $m_3$-nice and $m_3 \gg m_2/\mu_\phi(B)$,
 for every $i$ we have
\[
\big\|\mu^{(m_3)}_{D_i, N} \big\| \geq (1-\mu_\phi(B)/m_2)
  \mbox{ for all } N
  \mbox{ large enough.}
\]
Hence, 
since
$\mu^{(m_3)}_{D_i,N}\leq \mu_{b_i,N}$
and
$\norm{\mu_{b_i, N}} \leq 1+ o(1)$, 
we have $\big\| \mu_{b_i, N} - \mu^{(m_3)}_{D_i,N} \big\|\leq \mu_\phi (B)/m_2 + o(1)$.
Therefore,
in order to prove the claim
 it is enough to show that
\[
 (1-2/m_2)
\mu_\phi (B)\leq
\mu_{b_i,N} (B^\star) \leq (1+2/m_2)
 \mu_\phi (B)
\]
 for all $i$ 
 and all
 $N$ large enough. 
 This is a consequence of 
  Corollary \ref{c:equi-preim}
 and of the inequality $\mu_\phi (B^\star)>(1-1/m_2)\mu_\phi (B)$.
\end{proof}

Similarly,
 we also have the following.

\medskip\noindent
{\bf Claim 2.}
There is an integer $M_2=M_2(m_2, B, D^\star)$
such that, for all $N \geq M_2$, 
we have
\begin{equation}\label{e:mbn-tot}
1-4/m_2
 \leq
\mu^{(m_2)}_{B,N} (D^\star)
\leq 1+4/m_2.
\end{equation}

\begin{proof}
Since the ball $B$ is $m_2$-nice, 
 we have
\[
\big\|\mu^{(m_2)}_{B, N} \big\| \geq (1-1/m_2)  \mbox{ for 
all } N 
\mbox{ large enough}.
\]
Hence, by the 
 fact that $\mu^{(m_2)}_{B,N}\leq \mu_{a,N}$
 and $\norm{\mu_{a,N}}\leq 1 + o(1)$,
  in order to prove the claim
 it is enough to show that
\[
1-2/m_2
\leq 
\mu_{a,N} (D^\star)
\leq  
1+ 2/m_2
\]
for 
all $N$ large enough. 
This is again a
consequence of 
Corollary \ref{c:equi-preim}
and
of the inequality $\mu_\phi (D^\star) > (1-1/m_3)$.
\end{proof}
For every $N_1$
 sufficiently
large, 
every point in the support of $\1_{B^\star}\mu^{(m_3)}_{D_i,N_1}$ corresponds to 
an
$m_3$-good
inverse branch of $f$ of order ${N_1}$ mapping 
 $D_i$ to a relatively compact subset of
  $B$. Similarly, for every $N_2$ sufficiently large
every point in the support of $\1_{D^\star}\mu^{(m_2)}_{B,N_2}$ 
corresponds to 
an
$m_2$-good
inverse branch
of $f$ of order ${N_2}$ mapping
$B$ to a relatively compact subset of $D$. Composing such inverse branches we get 
inverse
branches $g_j$ of $f^{N_1+N_2}$ 
defined on $B$ 
whose images 
are
 relatively compact in $B$. 
  In what follows, we only consider these inverse branches $g_j$.
We also write $g_j$ as
$g^{(1)}_j \circ g^{(2)}_j$, where
$g^{(2)}_j$ is the corresponding 
inverse branch of $f^{N_2}$ on $B$
(whose image is then in $D$)
and
$g^{(1)}_j$ 
is the corresponding inverse branch of $f^{N_1}$ on $g^{(2)}_j (B)$. 
We also set $i=i(j)$, where
 $g^{(2)}_j (B) \subset D_i$.

Each inverse branch $g_j$
 as above 
contracts the Kobayashi metric of $B$, and thus admits a unique
 fixed
point $y_j$, which is
 attracting for $g_j$ and hence repelling for
 $f^{N_1+N_2}$. 
Up to possibly increasing the integers 
 $M_1$ and $M_2$
 given by the Claims above, 
we can assume that 
the above properties hold for $N_1 = M_1$ and $N_2 = M_2$.  
We set $n(m):=M_1(m_2)+M_2(m_2)$
for a fixed choice of sufficiently
large  $m_1, m_2,m_3$
and,
for all $n\geq n(m)$,
we define the set $Q$ as the union of all such fixed points constructed as above
with $N_1 = M_1 (m_2)$ and $N_2 = n- N_1 \geq M_2 (m_2)$. 
 The points in $Q$ are then repelling periodic points
of period $n=N_1+N_2$ for $f$.
Observe
that, for all $j$ and all $z\in B$, 
since $g_j (B)\Subset B$ we have $g_j^l (z)\to y_j$ as $l\to\infty$. Since $B$ intersects
the small Julia set, 
by taking $z$ in the small Julia set we see 
that
 $y_j$ belongs to the small Julia set.
To conclude, we need to prove
\eqref{e:goal-pern} for this choice of $Q$. 
We set 
 \[\mu_n := \lam^{-n} \sum_{y\in Q} e^{\phi (y ) + \phi (f(y)) + \dots +\phi (f^{n-1} (y))}\delta_y
 =
  \sum_{j} e^{\phi (y ) + \phi (f(y)) + \dots +\phi (f^{n-1} (y))}\delta_{y_j}
 \]
and
\[
\begin{aligned}
\tilde \mu_n :=  \lam^{-n}
\sum_{j} 
& \Big(
e^{\phi (g^{(1)}_j (b_{i(j)}) ) + \phi ( f\circ g^{(1)}_j ( b_{i(j)} )) + \dots +\phi (f^{N_1-1} \circ g^{(1)}_j (b_{i(j)}))}
\frac{\rho (g^{(1)}_j (b_{i(j)})) }{\rho( b_{i(j)})} \cdot \\
& \cdot e^{\phi (g^{(2)}_j (a) ) + \phi ( f\circ g^{(2)}_j ( a )) + \dots +\phi (f^{N_2-1} \circ g^{(2)}_j (a))}
\frac{\rho(g_j^{(2)} (a))}{\rho(a)}
\delta_{g_j (a)}\Big).
\end{aligned}
\]
Observe that
there is a correspondence between the
terms in $\mu_n$ and those in $\tilde \mu_n$.
Moreover, since all the balls $B$ and $D_i$ are $m_2$-nice,
we have 
\[| \rho (g^{(1)}_j (b_{i(j)}))  /\rho (a)-1| \lesssim m_2^{-1} \mbox{ and }
|\rho (g^{(2)}_j (a) ) / \rho( b_{i(j)})-1| \lesssim m_2^{-1}
\mbox{ for all  } i \mbox{ and } j.\]
It
 follows from these inequalities and Lemma \ref{l:series-phi}
  that
$\abs{\mu_n (B) - \tilde \mu_n (B) }\lesssim \tilde \mu_n (B) m_2^{-1}$.
 Hence, in order to conclude it is enough to prove
that
\[
(1-1/(2m_1)) 
\mu_\phi (B)  \leq 
\tilde \mu_n (B)
\leq (1+1/(2m_1))
  \mu_\phi (B)
\]
because
$m_2$ is chosen
 large enough. By construction,
 we have
\[
\tilde \mu_n (B) = 
\sum_i
\mu^{(m_2)}_{B,N_2} (D^\star_i) \cdot \mu^{(m_3)}_{D_i,N_1} (B^\star).
\]
By Claim 1, this implies that
\[
 (1-4/m_2)
\mu_\phi (B)
 \sum_{i} 
\mu^{(m_2)}_{B,N_2} (D^\star_i)
\leq
 \tilde \mu_n (B) \leq
  (1+4/m_2)
  \mu_\phi (B)
\sum_{i} 
\mu^{(m_2)}_{B,N_2} (D^\star_i).
\]
The assertion then follows from
Claim 2 and the fact that $\sum_i \mu^{(m_2)}_{B,N_2} (D^\star_i) = \mu^{(m_2)}_{B,N_2} (D^\star)$,
 by taking $m_2$ large enough.
 \end{proof}
 
 We can now
 conclude the proof of Theorem \ref{t:equidistr-pern}. As mentioned at the beginning of the section, this also completes
 the proof of Theorem \ref{t:main}.

\begin{proof}[End of the proof of Theorem \ref{t:equidistr-pern}]
For every $i\in \N$
we construct 
a finite family 
of disjoint
open sets
$\mathcal U_i :=\{U_{i,j}\}_{1\leq j \leq J_i}$
with the following properties:
\begin{enumerate}
\item 
 $\mu_\phi 
(\cup_{1\leq j \leq J_i} U_{i,j})=1$;
\item for all $1\leq j \leq J_i$ we have $\diam (U_{i,j}) < 1/i$;
\item for all $i\geq 2$ and $1\leq j \leq J_i$ there exists $1\leq j' \leq J_{i-1}$
such that
$U_{i,j}\subset U_{i-1, j'}$.
\end{enumerate}
We can construct these sets using local coordinates and generic real hyperplanes which are parallel to the coordinate hyperplanes.
Observe also that, by the first condition, we have $\mu_\phi (\partial U_{i,j})=0$ for all $i$ and $1\leq j \leq J_i$.

For every $n$, we define $i_n:= \max\{ m\leq n \colon n \geq
n(m, \mathcal U_m)
\}$, 
where $n(m, \mathcal U_m)$
is given by Lemma \ref{l:bound-weak}. 
Observe that $i_n \to \infty$ as $n\to \infty$.
We define $P'_n
\subset 
\cup_j U_{i_n, j}
$
as the union of the
 sets of repelling
 periodic
points of period $n$
in the small Julia set
 obtained
 by applying Lemma
 \ref{l:bound-weak} to 
 the collection $\mathcal U_{i_n}$
 instead of $\mathcal U$,
and set 
\[\mu'_n := \lam^{-n} \sum_{y \in P'_n} e^{\phi(y) + \phi(f(y)) + \dots + \phi (f^{n-1} (y))} \delta_y.
\]
By Properties (i) and (ii) 
of the open sets $U_{i,j}$ and Lemma \ref{l:bound-weak},
 any limit $\mu'$ of the sequence $\{\mu'_n\}$
has mass 1.
 So, since $\mu_\phi (
\cup_j U_{i_n,j} 
 )=1$
 for all $n$
  and $\diam (U_{i,j})<1/i$ for all $i$,
  it is enough to prove that
\begin{equation}\label{e:mm_ij}
\liminf_{n\to \infty} \mu'_n (U_{i^\star,j^\star}) 
\geq  \mu_\phi (U_{i^\star, j^\star}) \mbox{ for all } i^\star \in \N \mbox{ and } 1 \leq j^\star \leq J_{i^\star}.
\end{equation}
Indeed,
given any open set $A\subseteq \P^k$, we can write $A$ as a
countable
 union
of compact 
sets of the form 
$\bar U_{i,j}\Subset A$, 
 overlapping only on their boundaries. We then see
that 
\eqref{e:mm_ij}
implies that $\mu_\phi(A)\leq  \mu' (A)$ for every open set $A$,
and
 the facts 
 that $\|\mu_\phi\|= \|\mu'\|$ 
 and
 $\mu_\phi(\partial U_{i,j})=0$
for all $i,j$
 imply that
$\mu_\phi=\mu'$.

We can then fix $i^\star,j^\star$
as in \eqref{e:mm_ij}
and a positive number $\eps$, 
and
it is enough to
prove that
 \begin{equation*}\label{e:diff-eps}
\mu'_n (U_{i^\star, j^\star}) \geq \mu_\phi (U_{i^\star, j^\star}) - \eps
\quad \mbox{ for all } n \mbox{ sufficiently large}.
\end{equation*}

We only consider in what follows integers $n$ such that $i_n > i^\star$
 and the sets $U_{i_n,j}$ which
are contained in
$U_{i^\star, j^\star}$. 
For all such $n$, we have $\mu_\phi (U_{i^\star, j^\star}) =
\sum_{ j}\mu_\phi (U_{i_n,j})$
and $\mu'_n (U_{i^\star, j^\star}) =  \sum_{j} \mu'_n (U_{i_n,j})$. It follows
by
the definition of $\mu'_n$ and
Lemma \ref{l:bound-weak}
 that
\[
\big|
\mu'_n (U_{i^\star, j^\star}) - \mu_\phi (U_{i^\star, j^\star})
\big|
 \leq  \sum_j
\abs{   \mu'_n (U_{i_n,j}) -\mu_\phi (U_{i_n,j}) } 
 \leq 
 i_n^{-1}\sum_j 
\mu_\phi (U_{i_n, j}) 
=
 i_n^{-1} \mu_\phi (U).
\]
The assertion follows.
\end{proof}

\begin{remark}
One could improve the argument in the proof of Lemma 
\ref{l:bound-weak}
to obtain that $P'_n$ can be taken to
be a subset of the repelling periodic points with
a good control of
 the eigenvalues, see
for instance \cite{berteloot2008normalization,berteloot2019distortion}. 
This implies that,
setting $\Sigma_j := L_{k-j+1} + \dots +L_k$,  we have
\begin{equation*}\label{e:lyap-partial}
\Sigma_j = 
\lim_{n\to \infty}
\lam^{-n} \sum_{y \in P'_n}
 \frac{1}{n}
 e^{\phi(y) + \phi(f(y)) + \dots + \phi (f^{n-1} (y))}
\textstyle
\log  \Big\| \bigwedge^{j} Df^n_y \Big\|
\end{equation*} 
 and,
 in particular,
 \begin{equation*}\label{e:lyap}
\Sigma_k = \sum_{j=1}^k L_j
=
\lim_{n\to \infty}
\lam^{-n} \sum_{y \in P'_n}
 e^{\phi(y) + \phi(f(y)) + \dots + \phi (f^{n-1} (y))}
\log |\jac Df_y|.
 \end{equation*}

Here,
$Df^n_x \colon T_x \P^k \to T_{f^n (x)}\P^k$ 
denotes
the differential of $f^n$
at $x$. This is a linear map from the
complex tangent space
of $\P^k$ at $x$ to the one 
at $f^n (x)$. It induces the natural 
 linear map
$\bigwedge^j Df^n_x$ from the exterior power $\bigwedge^j T_x\P^k$
to
$\bigwedge^jT_{f^n (x)}\P^k$.
\end{remark}

\printbibliography

\end{document}